\newtheorem{theorem}{Theorem}[section]
\newtheorem{lemma}{Lemma}[section]
\newtheorem{remark}{Remark}[section]
\renewcommand{\theequation}{\arabic{section}.\arabic{equation}}
\renewcommand{\thetheorem}{\arabic{section}.\arabic{theorem}}
\renewcommand{\thelemma}{\arabic{section}.\arabic{lemma}}
\renewcommand{\theproposition}{\arabic{section}.\arabic{proposition}}
\renewcommand{\thealgorithm}{\arabic{section}.\arabic{algorithm}}
\newcommand\dd{~{\rm d}}
\newcommand{\burg}{{\sf b}}
\newcommand\R{\mathbb{R}}
\newcommand\Rg{\mathcal{R}}
\newcommand\N{\mathbb{N}}
\newcommand\Z{\mathbb{Z}}
\newcommand\E{\mathcal{E}}
\newcommand\F{\mathcal{F}}
\newcommand\bg{{\bm g}}
\newcommand\Wdot{\dot{\mathscr{W}}^{1,2}}
\newcommand\del{\delta}
\newcommand\ddel{\delta^2}
\newcommand{\<}{\langle}
\renewcommand{\>}{\rangle}
\newcommand{\ulin}{u^{\rm lin}}
\def\Xint#1{\mathchoice
	{\XXint\displaystyle\textstyle{#1}}%
	{\XXint\textstyle\scriptstyle{#1}}%
	{\XXint\scriptstyle\scriptscriptstyle{#1}}%
	{\XXint\scriptscriptstyle\scriptscriptstyle{#1}}%
	\!\int}
\def\XXint#1#2#3{{\setbox0=\hbox{$#1{#2#3}{\int}$ }
		\vcenter{\hbox{$#2#3$ }}\kern-.6\wd0}}
\def\mint{\Xint-}
\titleformat{\subsubsection}[runin]{\bf }{
  \arabic{section}.\arabic{subsection}.\arabic{subsubsection}}{
  0.5em}{}[.]
\title{QM/MM Methods for Crystalline Defects \\
  Part 2 : Consistent Energy and Force-Mixing}
\author{Huajie Chen and Christoph Ortner\footnote{{\tt
      huajie.chen@warwick.ac.uk} and {\tt c.ortner@warwick.ac.uk}.  Mathematics
    Institute, University of Warwick, Coventry CV47AL, UK.  This work was
    supported by ERC Starting Grant 335120. CO's work was also supported by the
    Leverhulme Trust through a Philip Leverhulme Prize and by EPSRC Standard
    Grant EP/J021377/1.  }}
\date{}
\begin{document}
\maketitle

\begin{abstract}
  QM/MM hybrid methods employ accurate quantum (QM) models only in regions of
  interest (defects) and switch to computationally cheaper interatomic potential
  (MM) models to describe the crystalline bulk.
  
  We develop two QM/MM hybrid methods for crystalline defect simulations, an
  energy-based and a force-based formulation, employing a tight binding QM
  model. Both methods build on two principles: (i) locality of the QM model;
    and (ii) constructing the MM model as an explicit and controllable
    approximation of the QM model. This approach enables us to establish
  explicit convergence rates in terms of the size of QM region.
\end{abstract}

\section{Introduction}\label{sec-introduction}
Algorithms for concurrently coupling quantum mechanics and classical molecular
mechanics (QM/MM) are widely used to perform simulations of large systems in
materials science and biochemistry \cite{bernstein09, csanyi04, gao02,
  kermode08, ogata01, warshel76, zhang12}.  A QM model is necessary for accurate
treatments of bond breaking/formation, charge transfer, electron excitation and
so on. However, the applications of QM is limited to systems with hundreds of
atoms due to the significant computational cost. By contrast, MM methods based
on empirical inter-atomic potentials are able to treat millions of atoms or more
but reduced accuracy (more precisely, they are not transferable).  QM/MM
coupling methods promise (near-)QM accuracy at (near-)MM computational cost for
large-scale atomistic simulations in materials science.
% They combine
% the accuracy of QM description with the low computational cost of MM models,
% which have become popular in the past decades

In QM/MM simulations the computational domain is partitioned into two regions.
The region of primary interest, described by a QM model, is embedded in an
environment (e.g., bulk crystal) which is described by an MM model.  The
coupling between these two regions is the key challenge in the construction of
accurate and efficient QM/MM methods.

% Generally speaking, QM/MM coupling schemes can be classified according to
% whether they combine the QM and MM regions on the level of energies or forces
% \cite{bernstein09}: the energy-based methods build a combined total energy
% function of the systems; while the force-based methods combine QM forces for
% atoms in the QM region with MM forces in the MM region, possibly with an
% interpolation between the two in a transition region.  We will discuss both
% types of the methods in this paper.

A natural question is the accuracy of QM/MM models as a function of QM region
size. % With the hybrid schemes, the system should behave as if it was
% modelled fully quantum mechanically.  
The number of atoms in the QM region is a discretisation parameter and the
observables of interest should converge to the desired accuracy with respect to
this parameter.  Despite the growing number of QM/MM methods and their
applications, few of the publications have included quantitative tests of the
accuracy of the method and its convergence with respect to possible parameters.
To our best knowledge, there is no theoretical analysis for QM/MM methods in the
literature.

The purpose of this paper is to initiate a numerical analysis of QM/MM
methods. We develop two new QM/MM methods for crystalline defect simulations for
which we can prove rigorous a priori error estimates. We use the tight binding
(TB) model (a minimalist QM method) as the QM model and, for the MM region, {\em
  construct} an interatomic potential (or, forces) through an explicit
approximation of the TB model, which is reminiscent of the force matching
technique \cite{ercolessi94}. This approach enables us to establish explicit
convergence rates in terms of the size of the QM region.

Our analysis is based on two key preliminaries: the ``strong locality'' of the
(finite temperature) tight binding model \cite{chen15a} and the decay estimates
of equilibria in lattices with defects \cite{chenpre_vardef,ehrlacher13}.

\subsubsection*{\bf Outline}
In Section \ref{sec-review}, we review the existing QM/MM methodology for
material systems.  In Section \ref{sec-pre}, we review the tight binding model
for crystalline defects which we use as the QM model.  In Section
\ref{sec-e-mix-idea} and \ref{sec-f-mix-idea} we construct QM/MM coupling
schemes with rigorous error estimates based, respectively, on energy-mixing and
force-mixing principles. Finally, we summarise our findings and make some
concluding remarks concerning practical aspects which we will pursue in
forthcoming work.

\subsubsection*{Notation}
We will use the symbol $\langle\cdot,\cdot\rangle$ to denote an abstract duality
pairing between a Banach space and its dual. The symbol $|\cdot|$ normally
denotes the Euclidean or Frobenius norm, while $\|\cdot\|$ denotes an operator
norm.
For the sake of brevity of notation, we will denote $A\backslash\{a\}$ by
$A\backslash a$, and $\{b-a~\vert ~b\in A\}$ by $A-a$.

For a differentiable function $f$, $\nabla f$ denotes the Jacobi matrix.
% and $\nabla_{r}f = \nabla f\cdot r$ the directional derivative.
For $E \in C^2(X)$, the first and second variations are denoted by
$\< \delta E(u), v \>$ and $\< \delta^2 E(u) w, v \>$ for $u, v, w \in X$.
% We will avoid use of higher variations in this explicit way.
For higher variations, we will use the notation
$\delta^k E(u_0)[u_1,\cdots,u_k]$, and $\delta^k E(u_0)[u^{\otimes k}]$ for
abbreviation when $u_1=\cdots=u_k = u$.

For $j\in\N$, $\bg\in(\R^d)^{A}$, and $V\in C^j\big((\R^d)^{A}\big)$, we define
the notation
\begin{eqnarray*}
	V_{,{\bm \rho}}\big({\bm g}\big) := 
	\frac{\partial^j V\big({\bm g}\big)}
	{\partial {\bm g}_{\rho_1},\cdots,\partial{\bm g}_{\rho_j}}
	\quad{\rm for}~~
	{\bm \rho}=(\rho_1,\cdots,\rho_j)\in A^j.
\end{eqnarray*}	

The symbol $C$ denotes generic positive constant that may change from one line
of an estimate to the next. When estimating rates of decay or convergence, $C$
will always remain independent of the system size, of lattice position or of
test functions. The dependencies of $C$ will normally be clear from the context
or stated explicitly.

 % ACKNOWLEDGEMENTS
\subsubsection*{Acknowledgements}
We thank Noam Bernstein, Gabor Cs\'{a}nyi and James Kermode for their helpful
discussions. The work presented here is related to ongoing joint work.

\section{QM/MM coupling methods}
\label{sec-review}
\setcounter{equation}{0}
The many different variants to couple QM and MM models can be broadly divided
into two categories: energy-mixing and force-mixing. Energy-mixing methods
define an energy functional that involves mixture of QM and MM energies, and the
solution is obtained by minimizing the energy functional.  By contrast,
force-mixing methods define a system of force balance equations, where the forces
involve contributions from QM and MM models and are non-conservative (i.e., they
are not compatible with any energy functional). In the present section we review
both classes of QM/MM schemes for materials systems.

\subsection{Energy-mixing}\label{sec-e-mix}
\label{sec-e-mix}
The system under consideration is partitioned into QM and MM regions (see Figure
\ref{fig-qmmm-dec} for two examples).  Let $y^{\rm QM}$ and $y^{\rm MM}$ denote
the respective atomic configurations in these two regions.  Depending on the
construction of the hybrid total energy $E$, energy-based methods can mainly be
divided into two categories:

\begin{figure}[ht]
	\centering
	\includegraphics[height=6.0cm]{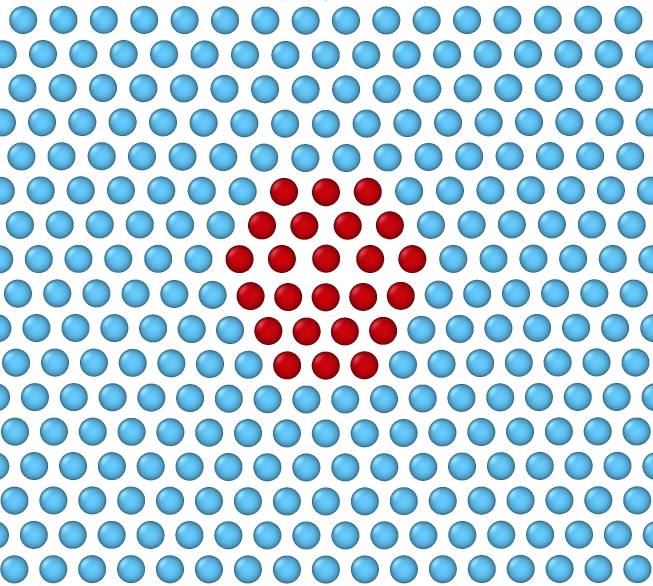} \qquad
        \includegraphics[height=6.0cm]{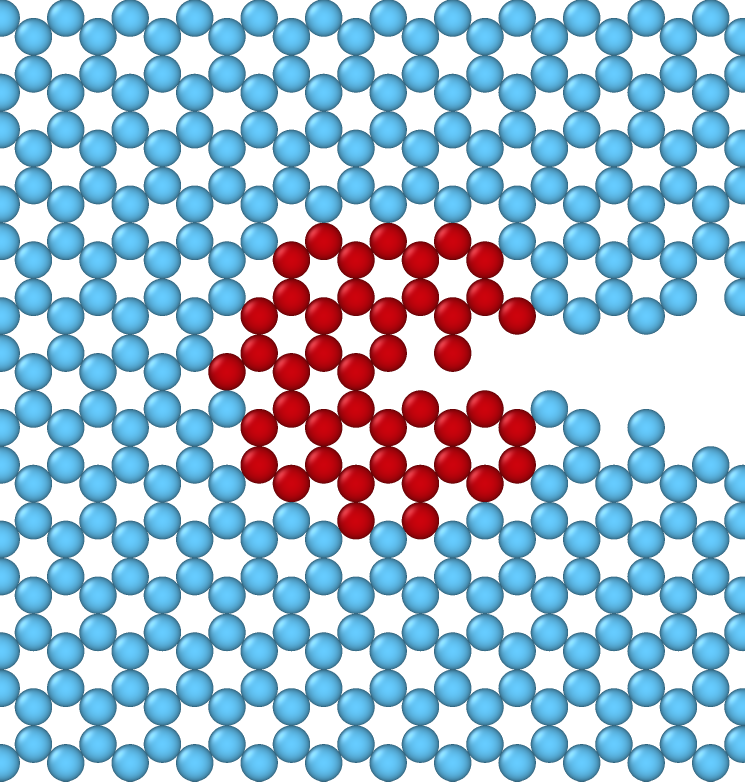}
	\caption{Partition of QM and MM regions for an edge dislocation in a 2D
          triangular lattice and crack in the 2D hexagonal lattice
          (cartoons). The dislocation core and a small neighbourhood belong to the 
          QM region (red / dark), while the bulk crystal behaves purely
          elastically and can therefore be well described by an empirical
          interatomic potential (blue / light).
          % silicon fracture simulation, where the crack tip and its surrounding
          % belong to QM region.  Reprinted with permission from
          % \cite{bernstein09,broughton99}
        }
	\label{fig-qmmm-dec}
\end{figure}

(1) In the {\it subtractive approach}, e.g. the {\it ONIOM} method
\cite{khare07,ogata01,svensson96} and its derivatives,
\begin{eqnarray}\label{energy-subtrative}
E(y^{{\rm QM}\cup{\rm MM}}) = E^{\rm MM}(y^{{\rm QM}\cup{\rm MM}}) 
+ E^{\rm QM}(y^{\rm QM}) - E^{\rm MM}(y^{\rm QM});
\end{eqnarray}
that is, an MM energy for the entire system is corrected by the difference
between the QM and MM energies of the QM region. 

(2) In the {\it additive approach}, e.g. {\it ChemShell} \cite{metz14}, {\it
  DL-FIND} \cite{kastner09}, {\it MAAD} \cite{broughton99} and {\it QUASI}
\cite{sherwood03}) the QM energy of the QM region and MM energy of the MM region
are connected via an interaction energy which may depend on an interface that
involves parts of both regions,
\begin{eqnarray}\label{energy-additive}
  E(y^{{\rm QM}\cup{\rm MM}}) = E^{\rm QM}(y^{\rm QM}) 
  + E^{\rm MM}(y^{\rm MM}) + E^{\rm interaction}(y^{\rm QM},y^{\rm MM}).
\end{eqnarray}

The advantage of energy-based methods is that they are naturally energy
conserving. Unfortunately, the spurious interface effects acting between the QM
and MM regions can be significant.
% A main challenge of the energy-based 
% methods is to minimize the boundary effects. 
To alleviate such effects, the QM and MM regions are either ``passivated''
\cite{sherwood03,sokol04} or ``buffered'' \cite{ogata05,ogata01}. In the first
approach, the energies $E^{\rm QM}(y^{\rm QM})$ in \eqref{energy-subtrative} and
\eqref{energy-additive} are the energies of the passivated cluster of the QM
region, in which a number of additional atoms that have no counterparts in the
real system (for example, hydrogen atoms) are added to the QM region to
terminate the broken bonds.  The second approach handles the boundary by
defining buffer layers surrounding the QM and MM regions, so that each atom can
see a full complement of surrounding atoms. 

The second approach seems to be preferred in solid state systems since the
elimination of the boundary effects for passivated atoms will not be perfect and
may indeed be severe \cite{csanyi05}.  The simplest example is for a perfect
bulk system, where the true force on all atoms are zero. However, the passivated
cluster force computed with QM and MM will in general be non-zero on the
passivation atoms and nearby atoms \cite{bernstein09}. This is reminiscent of
the ghost forces which are a well-understood concept in atomistic/continuum
multi-scale methods \cite{luskin13}.

\subsubsection{An idealized hybrid model}
\label{sec-e-idealized}
Beside the widely used {\it subtractive} and {\it additive} approaches, there is
a third type of energy-based formulations: the {\it local energy approach},
which mixes local energies computed by QM and MM methods in their respective
regions,
\begin{eqnarray}\label{energy-local}
E(y^{{\rm QM}\cup{\rm MM}}) = 
\sum_{\ell\in{\rm QM}}E_{\ell}^{\rm QM}(y^{{\rm QM}\cup{\rm MM}}) 
+ \sum_{\ell\in{\rm MM}}E_{\ell}^{\rm MM}(y^{{\rm QM}\cup{\rm MM}})
\end{eqnarray}
with $E_{\ell}$ denoting the local energy associated to the $\ell$-th atomic
site.  Even though the expression \eqref{energy-local} seems intuitive, this
variant is not commonly used in QM/MM coupling schemes. Indeed, we are only
  aware of a brief reference to this approach in \cite{bernstein09}. The reason
  is that it was unclear how to decompose $E$ into local contributions
  $E_\ell^{\rm QM}$ that match with a classical interatomic potential site
  energy $E_\ell^{\rm MM}$. 

In the previous work of this series \cite{chen15a}, we
studied the tight binding site energy introduced in \cite{ercolessi05,finnis03}
and justified its ``strong locality" rigorously. This is important and useful in
QM/MM coupling scheme based on \eqref{energy-local} since: (1) when using
classical potentials the total energy is almost always written as a sum over
atoms $E=\sum_{\ell}E_{\ell}$, therefore, we are able to establish the bridge
between the electronic and classical worlds; (2) rather than using ``black-box"
MM potentials, we can construct MM site energies based on the approximations of
QM site energies for good coupling of the different models. 

It is pointed out in \cite{bernstein09} that matching the force-constant/dynamic
matrix (i.e. the first derivatives of the force or the second order derivatives
of the energy with respect to atomic positions) would guarantee a perfect match
between the MM and QM forces for arbitrary infinitesimal displacements from
equilibrium. In case of an energy-based method, only such a strict matching
criterion can guarantee that spurious forces are eliminated for near equilibrium
configurations. The errors resulting from mismatching the force-constant/dynamic
matrix are analogous to so-called ``ghost forces" in atomistic/continuum hybrid
schemes.

Based on these observations, we construct an idealized MM site energy by taking
the second order expansion of $E_{\ell}^{\rm QM}$:
\begin{eqnarray}\label{site-MM}
E_{\ell}^{\rm lin}(y) := E_{\ell}^{\rm QM}(y_0) 
+ \big\<\delta E_{\ell}^{\rm QM}(y_0),y-y_0\big\> 
+ \frac{1}{2}\big\<\delta^2 E_{\ell}^{\rm QM}(y_0)(y-y_0),y-y_0\big\>,
\end{eqnarray}
where $y=y^{{\rm QM}\cup{\rm MM}}$ and $y_0$ is a predicted (near-)equilibrium
configuration, typically the far-field crystalline environment or an explicit
linearlised elasticity solution. The QM/MM total energy \eqref{energy-local} is
then given by
\begin{eqnarray}\label{energy-qmmm-lin}
E(y) = \sum_{\ell\in{\rm QM}}E_{\ell}^{\rm QM}(y) 
+ \sum_{\ell\in{\rm MM}}E_{\ell}^{\rm lin}(y).
\end{eqnarray} 
\eqref{energy-qmmm-lin} gives rise to a simple QM/MM coupling scheme, in which
the MM potential is constructed such that it matches the QM model.  Matching of
the MM and QM models for higher order information can also become important,
e.g., for slowly decaying elastic fields (dislocations, cracks) or due to
increased temperature which may cause fluctuations to displacements beyond the
quadratic regime \cite{bernstein09}. We will therefore discuss arbitrary order
expansions in this paper.

\subsection{Force-mixing}
\label{sec-f-mix}
With the partition of QM and MM regions, the force-based methods combine QM
forces for atoms in the QM region with MM forces in the MM region. 
The simplest variant, {\it brutal force mixing} \cite{csanyi05}, is defined by
\begin{eqnarray}
  F_{\ell}(y^{{\rm QM}\cup{\rm MM}}) = \left\{\begin{array}{ll}
F^{\rm QM}_{\ell}(y^{{\rm QM}\cup{\rm MM}}) & {\rm if}~\ell\in{\rm QM} \\[1ex]
F^{\rm MM}_{\ell}(y^{{\rm QM}\cup{\rm MM}}) & {\rm if}~\ell\in{\rm MM}
\end{array}\right. .
\end{eqnarray}
Typically, the QM and MM forces are computed by carving a cluster buffered by a
layer of atoms defined by a distance cutoff, and the forces on all the buffer
atoms are discarded \cite{csanyi05}. In other variants, a transition region is
introduced where forces are smoothly blended \cite{csanyi05}. Examples of
force-mixing QM/MM methods are {\it DCET} \cite{abraham00,bernstein03} and {\it
  LOTF} \cite{csanyi04,vita98}.

The main advantage of force-mixing is that there are no spurious interface
forces as in energy-mixing
schemes. % (double-counted or missing contributions to
% the total energy and incorrect energies contributed by passivation or buffer
% atoms). 
However, this comes at the cost of a non-conservative force field. Moreover, if
the QM and MM forces are directly used (without modification) for molecular
dynamics simulations, then the dynamics will not conserve momentum
\cite{bernstein09}.

\subsubsection{An idealized model}
\label{sec-f-idealized}
Similar to the discussions for energy-mixing methods, we will construct MM
forces by an expansion of QM forces so that the force-constant/dynamic matrix
can be matched, e.g., with a first order expansion,
\begin{eqnarray}
  F_{\ell}^{\rm MM}(y) := F_{\ell}^{\rm lin}(y) :=
  F_{\ell}^{\rm QM}(y_0) + \big\<\delta F_{\ell}^{\rm QM}(y_0),y-y_0\big\>,
\end{eqnarray}
where $y=y^{{\rm QM}\cup{\rm MM}}$ and $y_0$ is a suitable predictor of the
equilibrium configuration. For the same reasons as in the energy-mixing
approach, we will also consider higher-order expansions of the forces.

\begin{remark}
  Our construction of the MM site energies and of the MM forces is reminiscent
  of the classical idea of {\em force matching}. This is usually applied to the
  construction of interatomic potentials \cite{ercolessi94} and has more
  recently been applied in a coupling context, e.g., in
  \cite{csanyi04,tejada15}. A key difference in our present work, in the
  energy-based variant, is that we match the site energies rather than that
  total energies (and forces).
\end{remark}

\section{Tight binding model for crystalline defects}
\label{sec-pre}
\setcounter{equation}{0}
A finite or countable index-set $\Lambda \subset \R^m$ is called a {\em
  reference configuration}. A deformed configuration is described by a map
$y : \Lambda\rightarrow\R^d$ with $m, d \in \{2,3\}$ denoting the space
dimensions. (Allowing $m \neq d$ allows us to define 2D models of straight
dislocations.)

We say that the map $y$ is a {\it proper configuration} if the atoms do not
accumulate:
\begin{flushleft} 
	{\bf L}. \quad $\exists~\mathfrak{m}>0$ such that
	~~$|y(\ell)-y(k)|\geq\mathfrak{m} |\ell-k|
	~~\forall~\ell,k\in\Lambda$.
\end{flushleft}
Throughout, we let
$\mathcal{V}_{\mathfrak{m}}\subset\big(\mathbb{R}^d\big)^{\Lambda}$ denote the
subset of all $y \in (\R^d)^{\Lambda}$ satisfying {\bf L}.  If we need to
emphasize the domain $\Lambda$, then we will write
$\mathcal{V}_{\frak m}(\Lambda)$.

\subsection{The tight binding model and its site energy}
\label{sec-tb}
\def\Rc{R_{\rm c}}
The tight binding model is a minimalist QM type model, which enables the
investigation and prediction of properties of molecules and materials.  For
simplicity of presentation, we consider a `two-centre' tight binding model
\cite{goringe97,Papaconstantopoulos15} with a single orbital per atom and the
identity overlap matrix.  All results can be extended directly to general
non-self-consistent tight binding models, as described in~\cite{chen15a}.

For a finite system with reference configuration $\Omega$, $\#\Omega = N$,
the `two-centre' tight binding model is formulated in terms of a discrete
Hamiltonian, with the matrix elements
\begin{eqnarray}\label{tb-H-elements}
\Big(\mathcal{H}(y)\Big)_{\ell k}
=\left\{ \begin{array}{ll} % \displaystyle
           h_{\rm ons}\left(\sum_{j\neq \ell} 
           \varrho\big(|y({\ell})-y(j)|\big)\right) 
	%\big(\{|y(\ell)-y(n)|\}_{|y(\ell)-y(n)|\leq \Rc}\big)
	& {\rm if}~\ell=k \\[1ex]
	h_{\rm hop}\big(|y(\ell)-y(k)|\big) & {\rm if}~\ell\neq k,
	%~{\rm and}~|y(\ell)-y(k)|\leq \Rc; 
	% \\[1ex ]
	% 0 & {\rm otherwise},
\end{array} \right. 
\end{eqnarray}
where $\Rc$ is a cut-off radius,
$h_{\rm ons} \in C^{\mathfrak{n}}([0, \infty))$ is the on-site term, with
$\varrho \in C^{\mathfrak{n}}([0, \infty))$, $\varrho = 0$ in $[\Rc,\infty)$,
and $h_{\rm hop} \in C^{\mathfrak{n}}([0, \infty))$ is the hopping term with
$h_{\rm hop}(r)=0~\forall r\in[\Rc,\infty)$. 

Our results can be generalised to the more general TB model presented in
  \cite{chen15a}, but for the sake of simplicity of notation, we restrict
  ourselves to \eqref{tb-H-elements}, which still includes most
  non-self-consistent TB models in the literature.

%
% $h_{\rm ons}$ depends on the neighbouring atomic configuration and is invariant
% with respect to the permutation of $n$, and the hopping term $h_{\rm hop}$
% depends only on the displacement between the two atoms.
% Here, we have ignored the indices for multiple atomic orbitals and overlap
% matrix for simplicity of notations (see \cite{chen15a} for more detailed
% discussions).
Note that $h_{\rm ons}$ and $h_{\rm hop}$ are independent of $\ell$ and $k$,
which indicates that all atoms of the system belong to the same species.
We observe that the formulation \eqref{tb-H-elements} satisfies all the
assumptions on Hamiltonian matrix elements in \cite[Assumptions \bf{H.tb, H.loc,
  H.sym, H.emb}]{chen15a}.

With the above tight binding Hamiltonion $\mathcal{H}$, 
we can obtain the band energy of the system
\begin{eqnarray}\label{e-band}
E^\Omega(y)=\sum_{s=1}^N f(\varepsilon_s)\varepsilon_s,
\end{eqnarray}
where $(\varepsilon_s)_{s = 1}^N$ are the eigenvalues of $\mathcal{H}(y)$, with
associated eigenvectors $\psi_s$, 
\begin{eqnarray}\label{eigen-H}
\mathcal{H}(y)\psi_s = \varepsilon_s\psi_s\quad s=1,2,\cdots,N,
\end{eqnarray}
% $\psi_s=\{[\psi_s]_{\ell}\}_{1\leq\ell\leq N}$ the associated 
% eigenvectors, 
$f$ the Fermi-Dirac function,
\begin{eqnarray}\label{fermi-dirac}
  f(\varepsilon) = \left( 1+e^{(\varepsilon-\mu)/(k_{\rm B}T)} \right)^{-1},
\end{eqnarray}
$\mu$ a fixed chemical potential, $k_{\rm B}$ Boltzmann's constant, and $T>0$
the temperature of the system.
We do not consider the pairwise repulsive potential, which can be treated purely
classically \cite{chen15a}.

Following \cite{finnis03}, we can distribute the energy to each atomic site 
\begin{eqnarray}\label{E-El}
E^\Omega(y)=\sum_{\ell\in\Omega} E_{\ell}^{\Omega}(y)
\qquad{\rm with}\qquad
E_{\ell}^\Omega(y) := \sum_{s}f(\varepsilon_s)\varepsilon_s
\left|[\psi_s]_{\ell}\right|^2.
\end{eqnarray}

The following theorem \cite[Theorem 3.1 (i)]{chen15a} states the existence of
the limit as $\Omega \uparrow \Lambda$, $\Omega \subset \Lambda$, for some
countable reference domain $\Lambda$. For an infinite body, $\Lambda$, we will
denote this limit site energy by $E_{\ell}$. We will continue to denote the site
energies of subsystems $\Omega \subset \Lambda$ by $E_\ell^\Omega$. When
$\Lambda$ is an infinite reference configuration and $A \subset \R^m$, then we
will also use the short-hand $E_\ell^{A} := E_\ell^{A \cap \Lambda}$.

% by taking the pointwise thermodynamic limit (see ).  
% \hc{
% Throughout this paper, we will use $E_{\ell}$ to denote the 
% thermodynamic limit of the site energy and let 	
% $E^{\Omega}_{\ell}(y)$ denote the site energy of the finite
% subsystem $\Omega\subset\Lambda$.
% }
% The following theorem states the existence of this limit, its
% regularity, locality, and isometry/permutation invariance (see \cite{chen15a}
% for the proof).

\begin{theorem}\label{theorem-thermodynamic-limit}
  Suppose $\Lambda$ is countable, $y \in \mathcal{V}_{\frak{m}}(\Lambda)$ a
  deformation, and $\Omega\subset \Lambda$ a finite subset.
  % Let $\Omega\subset \Lambda$ be any finite subset and
  % $E^{\Omega}_{\ell}(y)$ denote the site energy of the
  % corresponding finite subsystem.  
  Then,
	\begin{itemize}		
		\item[(i)] {\rm (regularity and locality of the site energy)} 
		$E^{\Omega}_{\ell}(y)$ possesses $j$th order partial derivatives with
		$1 \leq j \leq \mathfrak{n}-1$, and there exist 
		constants $C_j$ and $\eta_j$ such that
		\begin{eqnarray}\label{site-locality-tdl}
		\left|\frac{\partial^j E^{\Omega}_{\ell}(y)}{\partial [y(m_1)]_{i_1}
		\cdots\partial [y(m_j)]_{i_j}}\right|
	\leq C_j e^{-\eta_j\sum_{l=1}^j|y(\ell)-y(m_l)|} 
    	\end{eqnarray}
    	with $m_k\in\Omega$ and $1\leq i_k\leq d$ for any $1\leq k\leq j$;
		
		\item[(ii)] {\rm (isometry invariance)}
		$E^{\Omega}_{\ell}(y) = E^{\Omega}_{\ell}(g(y))$ 
		if $g:\R^d\rightarrow\R^d$ is an isometry;
		
		\item[(iii)] (permutation invariance)
		$E^{\Omega}_{\ell}(y) = E^{\mathcal{G}^{-1}(\Omega)}_{\mathcal{G}^{-1}
			(\ell)}(y\circ\mathcal{G})$
		% if $\mathcal{G}:\Omega\rightarrow\Omega$ is a 
		% permutation of $\Omega$;
		for a permutation $\mathcal{G}:\Lambda\rightarrow\Lambda$. 
		
		\item[(iv)] {\rm (thermodynamic limit)}
		$E_{\ell}(y):=\lim_{R\rightarrow\infty} E^{B_R(\ell)}_{\ell}(y)$
		exists and satisfies (i), (ii), (iii) with $\Omega=\Lambda$.
	\end{itemize}
\end{theorem}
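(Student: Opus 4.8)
The plan is to represent the site energy through a contour integral of the resolvent $\rz{y}=\big(z-\Ham(y)\big)^{-1}$ and to combine the finite hopping range of $\Ham$ with an exponential decay estimate for the resolvent; parts (ii) and (iii) then follow from the structure of $\Ham$, and (iv) from the second resolvent identity. \emph{Step 1 (a uniform contour).} Condition {\bf L} bounds, uniformly in $\ell$ and in $\Omega$, the number of sites $j$ with $|y(\ell)-y(j)|<\Rc$, so $\|\Ham(y)\|$ is controlled by a constant depending only on $h_{\rm ons},h_{\rm hop},\varrho,\Rc$ and $\mathfrak{m}$; in particular $\mathrm{spec}\big(\Ham(y)\big)\subset[-\mathfrak{c}_0,\mathfrak{c}_0]$ with $\mathfrak{c}_0$ independent of $\Omega$. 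Since the Fermi--Dirac function $f$ in \eqref{fermi-dirac} is analytic on $\{|{\rm Im}\,z|<\pi k_{\rm B}T\}$, fix once and for all a closed contour $\mathscr{C}$ inside this strip that encircles $[-\mathfrak{c}_0,\mathfrak{c}_0]$ and keeps $\dis{\mathscr{C}}{\mathrm{spec}(\Ham(y))}\geq\mathfrak{d}_0>0$. Using \eqref{eigen-H}--\eqref{E-El} and the residue theorem (the only poles enclosed by $\mathscr{C}$ are the eigenvalues $\varepsilon_s$),
\begin{equation*}
  E_\ell^\Omega(y)=\frac{1}{2\pi i}\oint_{\mathscr{C}}f(z)\,z\,\big[\rz{y}\big]_{\ell\ell}\dd z ,
\end{equation*}
with the superscript $\Omega$ on the resolvent suppressed.

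\emph{Step 2 (resolvent decay, differentiability, locality).} Because $\Ham(y)$ has hopping range $\Rc$ and $\dis{\mathscr{C}}{\mathrm{spec}(\Ham(y))}\geq\mathfrak{d}_0$, a standard Combes--Thomas (exponential weighting) argument gives $\big|[\rz{y}]_{\ell k}\big|\leq C_0 e^{-\gamma_0|y(\ell)-y(k)|}$ for all $z\in\mathscr{C}$, with $C_0,\gamma_0>0$ independent of $\Omega$ and of $z$. The map $y\mapsto\rz{y}$ has the $C^{\mathfrak{n}}$ regularity of $\Ham$ where it is defined, and repeated differentiation of $(z-\Ham(y))\rz{y}=I$ writes the $j$-th mixed partial derivative of $[\rz{y}]_{\ell\ell}$ in $y(m_1),\dots,y(m_j)$ as a finite Leibniz sum of terms $\big[\rz{y}\,D_1\,\rz{y}\,D_2\cdots\rz{y}\big]_{\ell\ell}$, where each $D_i$ is a partial derivative of $\Ham$ and hence has nonzero entries only between sites within $\Rc$ of one of the $m_1,\dots,m_j$ (with analogous terms when a $D_i$ absorbs several derivatives). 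Expanding the matrix products and inserting the resolvent estimate bounds each such term by $C e^{-\gamma_0 L}$ times an $O(1)$ number of summands (finiteness by {\bf L} and the range $\Rc$), where $L$ is the length of a closed walk from $\ell$ visiting a neighbour of each $y(m_i)$ and returning to $\ell$. Since any such walk has $L\geq 2\max_i|y(\ell)-y(m_i)|-2\Rc\geq\tfrac{2}{j}\sum_{i=1}^j|y(\ell)-y(m_i)|-2\Rc$, one obtains the product bound with some $\eta_j\in(0,\gamma_0)$; integrating over the bounded contour and using $|f(z)z|\leq C$ on $\mathscr{C}$ yields \eqref{site-locality-tdl}. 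I expect this multi-centre bookkeeping — turning the walk length into $\sum_i|y(\ell)-y(m_i)|$ while keeping enough exponential weight for the remaining (finite) lattice sums — to be the main technical point.

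\emph{Step 3 (symmetries).} Both invariances are already visible at the level of $\Ham$. If $g(v)=Qv+c$ with $Q\in O(d)$, the matrix elements \eqref{tb-H-elements} depend on $y$ only through the distances $|y(\ell)-y(k)|$, so $\Ham(g(y))=\Ham(y)$ exactly and therefore $E_\ell^\Omega(g(y))=E_\ell^\Omega(y)$, which is (ii). If $\mathcal{G}:\Lambda\to\Lambda$ is a permutation with associated permutation operator $P_{\mathcal{G}}$, then $\Ham(y\circ\mathcal{G})=P_{\mathcal{G}}^{\top}\Ham(y)P_{\mathcal{G}}$ on $\mathcal{G}^{-1}(\Omega)$, hence $\rz{y\circ\mathcal{G}}=P_{\mathcal{G}}^{\top}\rz{y}P_{\mathcal{G}}$, and reading off the $(\mathcal{G}^{-1}(\ell),\mathcal{G}^{-1}(\ell))$ diagonal entry in the representation of Step 1 gives (iii).

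\emph{Step 4 (thermodynamic limit).} For finite $\Omega\subset\Omega'\subset\Lambda$, extend $\Ham^\Omega(y)$ by zero to the index set of $\Omega'$; then $\Ham^{\Omega'}(y)-\Ham^{\Omega}(y)$ has all its nonzero entries between sites within $\Rc$ of $\Lambda\setminus\Omega$ (this covers both the added hopping entries and the modified on-site sums). The second resolvent identity gives
\begin{equation*}
  \rzo{\Omega'}{y}-\rzo{\Omega}{y}=\rzo{\Omega'}{y}\,\big(\Ham^{\Omega'}(y)-\Ham^{\Omega}(y)\big)\,\rzo{\Omega}{y},
\end{equation*}
so, with $\Omega=B_R(\ell)\cap\Lambda$, the $(\ell,\ell)$ entry on the left is a sum over boundary sites $p,q$ with $|\ell-p|,|\ell-q|\gtrsim R$; by Step 2 together with $|y(\ell)-y(p)|\geq\mathfrak{m}|\ell-p|-\Rc$ (from {\bf L}) each contribution is $\lesssim e^{-cR}$, whence $\big|E_\ell^{B_R(\ell)}(y)-E_\ell^{B_{R'}(\ell)}(y)\big|\leq Ce^{-cR}$ for $R'\geq R$. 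Thus $\big(E_\ell^{B_R(\ell)}(y)\big)_R$ is Cauchy and $E_\ell(y):=\lim_{R\to\infty}E_\ell^{B_R(\ell)}(y)$ exists, with exponential rate. Finally the estimate \eqref{site-locality-tdl} of Step 2 is uniform in $\Omega$ and so passes to $\Omega=\Lambda$ as $R\to\infty$, and the identities of Step 3 survive the limit (a permutation of $\Lambda$ fixes $\Lambda$, and for isometries $\Ham$ is literally unchanged), so $E_\ell$ satisfies (i), (ii), (iii) with $\Omega=\Lambda$.
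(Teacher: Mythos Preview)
Your proposal is correct and follows essentially the same route as the paper's source: the paper does not prove this theorem in-line but cites \cite[Theorem 3.1]{chen15a}, and the hint given in Appendix~B (Lemma~\ref{lemma-Vl-MN-err}, ``a calculation with the contour integration of the resolvent'') confirms that the argument there is precisely the one you outline --- contour representation $E_\ell^\Omega = \tfrac{1}{2\pi i}\oint f(z)z[\rz{y}]_{\ell\ell}\,dz$, Combes--Thomas decay of the resolvent, Leibniz expansion of $\partial^j\rz{y}$ into products $\rz{y}D_1\rz{y}\cdots\rz{y}$, and the second resolvent identity for the thermodynamic limit. One small point to tighten in Step~4: when you extend $\Ham^\Omega$ by zero to $\Omega'$, the extended operator acquires the eigenvalue $0$, so you should either note explicitly that $0$ lies strictly inside $\mathscr{C}$ (so $z^{-1}$ is bounded on $\mathscr{C}$ and the block-diagonal extended resolvent still satisfies the Combes--Thomas bound trivially), or work directly with a geometric resolvent identity that avoids the extension; either fix is routine.
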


For a finite subset $\Omega \subset \Lambda$, we define the (negative) force
\begin{eqnarray}\label{eq:force}
  F^{\Omega}(y):=\nabla E^{\Omega}(y),
  \qquad\text{in component notation,} \qquad
  \big[F_{\ell}^{\Omega}(y)\big]_i = 
  \frac{\partial E^{\Omega}(y)}{\partial [y(\ell)]_i}
  \quad 1\leq i\leq d.
\end{eqnarray} 
Using \eqref{E-El}, we have
\begin{eqnarray}\label{Fl-El}
\big[ F_{\ell}^{\Omega}(y) \big]_i = \sum_{k\in\Omega} 
\frac{\partial E_k^{\Omega}(y)}{\partial [y(\ell)]_i},
\end{eqnarray}
which, together with Theorem \ref{theorem-thermodynamic-limit}, yields the
thermodynamic limit of the force, as well as its regularity, locality, and
isometry/permutation invariance.

\begin{theorem}\label{theorem-thermodynamic-limit-force}
  Suppose the assumptions of Theorem \ref{theorem-thermodynamic-limit} are
  satisfied, then
	\begin{itemize}		
		\item[(i)] {\rm (regularity and locality of the force)} 
		$F^{\Omega}_{\ell}(y)$ possesses $j$th order partial derivatives with
		$1 \leq j \leq \mathfrak{n}-2$, and there exist 
		constants $C_j$ and $\eta_j$ such that
		\begin{eqnarray}\label{site-locality-tdl}
		\left|\frac{\partial^j F^{\Omega}_{\ell}(y)}{\partial [y(m_1)]_{i_1}
			\cdots\partial [y(m_j)]_{i_j}}\right|
		\leq C_j e^{-\eta_j\sum_{l=1}^j|y(\ell)-y(m_l)|} 
		\end{eqnarray}
		with $m_k\in\Omega$ and $1\leq i_k\leq d$ for any $1\leq k\leq j$;
		
		\item[(ii)] {\rm (isometry invariance)}
		$F^{\Omega}_{\ell}\big(Q y + c\big) = Q^{\rm T} F^{\Omega}_{\ell}(y) $
		for all $Q \in {\rm SO}(d), c \in \R^d$;
		
        \item[(iii)] (permutation invariance)
        $F^{\Omega}_{\ell}(y) =
        F^{\mathcal{G}^{-1}(\Omega)}_{\mathcal{G}^{-1}(\ell)}(y\circ\mathcal{G})$
        for a permutation $\mathcal{G}:\Lambda\rightarrow\Lambda$;
		
		\item[(iv)] {\rm (thermodynamic limit)}
		$F_{\ell}(y) := \lim_{R\rightarrow\infty} F_{\ell}^{B_R(\ell)}(y)$
		exists and satisfies (i), (ii), (iii) with $\Omega=\Lambda$.
	\end{itemize}
\end{theorem}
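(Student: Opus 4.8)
\medskip\noindent\textbf{Proof strategy.}
The whole argument runs through the identity \eqref{Fl-El}, which writes each component of $F_\ell^\Omega(y)$ as the finite sum $\sum_{k\in\Omega}\partial E_k^\Omega(y)/\partial[y(\ell)]_i$ of first derivatives of site energies; all four assertions then follow by feeding Theorem \ref{theorem-thermodynamic-limit} into this formula, with the exponential summability supplied by assumption {\bf L}. For (i) I would differentiate \eqref{Fl-El} a further $j$ times and apply Theorem \ref{theorem-thermodynamic-limit}(i) to the $(j{+}1)$st partial derivatives of $E_k^\Omega$ --- legitimate exactly when $j+1\le\mathfrak{n}-1$, i.e.\ $j\le\mathfrak{n}-2$, which matches the hypothesis --- obtaining the bound
\[
  \Big|\frac{\partial^j[F_\ell^\Omega(y)]_i}{\partial[y(m_1)]_{i_1}\cdots\partial[y(m_j)]_{i_j}}\Big|
  \le C_{j+1}\sum_{k\in\Omega}\exp\!\Big(-\eta_{j+1}\big(|y(\ell)-y(k)|+{\textstyle\sum_{l=1}^{j}}|y(m_l)-y(k)|\big)\Big).
\]
The one genuine manipulation is to redistribute the exponent: since $|y(\ell)-y(k)|+|y(m_l)-y(k)|\ge|y(\ell)-y(m_l)|$ for \emph{every single} $l$ (triangle inequality, using non-negativity of the remaining terms), the exponent dominates $\max_l|y(\ell)-y(m_l)|\ge\tfrac1j\sum_l|y(\ell)-y(m_l)|$, and it also dominates $|y(\ell)-y(k)|$; averaging, it dominates $\tfrac1{2j}\sum_l|y(\ell)-y(m_l)|+\tfrac12|y(\ell)-y(k)|$. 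Pulling the first summand out of the sum over $k$ and using that {\bf L} makes $\{y(k)\}_{k\in\Lambda}$ uniformly discrete --- whence $\sum_k e^{-\frac{\eta_{j+1}}{2}|y(\ell)-y(k)|}\le C$ uniformly in $\ell$, $\Omega$, $y$ --- yields \eqref{site-locality-tdl} with $\eta_j:=\eta_{j+1}/(2j)$ (the case $j=0$ needs only Theorem \ref{theorem-thermodynamic-limit}(i) with a single derivative and a bare geometric sum).

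For (ii) and (iii) I would first sum Theorem \ref{theorem-thermodynamic-limit}(ii),(iii) over the sites to obtain the invariances of the \emph{total} energy, $E^\Omega(Qy+c)=E^\Omega(y)$ for $Q\in{\rm SO}(d)$, $c\in\R^d$, and $E^\Omega(y)=E^{\mathcal{G}^{-1}(\Omega)}(y\circ\mathcal{G})$ for a permutation $\mathcal{G}$. Differentiating these two identities with respect to $[y(\ell)]_i$ and applying the chain rule gives (ii) and (iii) respectively; in the permutation case one uses that $\partial(y\circ\mathcal{G})(k)/\partial[y(\ell)]_i$ vanishes unless $k=\mathcal{G}^{-1}(\ell)$, which shifts the site label exactly as stated.

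For the thermodynamic limit (iv), Theorem \ref{theorem-thermodynamic-limit}(iv) provides the limiting site energies $E_k=\lim_{\Omega\uparrow\Lambda}E_k^\Omega$ satisfying the locality bound over $\Lambda$, and (as part of the construction in \cite{chen15a}) their first derivatives converge, $\partial E_k^\Omega/\partial[y(\ell)]_i\to\partial E_k/\partial[y(\ell)]_i$, for each fixed $k,\ell$. I would then set $[F_\ell(y)]_i:=\sum_{k\in\Lambda}\partial E_k(y)/\partial[y(\ell)]_i$, absolutely convergent by (i) for $E_k$, and, taking $\Omega=B_R(\ell)$, split
\[
  \big[F_\ell^{B_R(\ell)}(y)-F_\ell(y)\big]_i
  = \sum_{k\in B_R(\ell)}\Big(\frac{\partial E_k^{B_R(\ell)}(y)}{\partial[y(\ell)]_i}-\frac{\partial E_k(y)}{\partial[y(\ell)]_i}\Big)
  - \sum_{k\in\Lambda\setminus B_R(\ell)}\frac{\partial E_k(y)}{\partial[y(\ell)]_i}.
\]
The second sum is $O(e^{-cR})$ since {\bf L} forces $|y(\ell)-y(k)|\ge\mathfrak{m}R$ on the tail; the first sum tends to $0$ by dominated convergence for series, its summands going to $0$ pointwise while being dominated by $2C_1e^{-\eta_1|y(\ell)-y(k)|}$ uniformly in $R$ --- this last point relies on the locality constants in Theorem \ref{theorem-thermodynamic-limit}(i) being independent of $\Omega$. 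Thus $F_\ell^{B_R(\ell)}(y)\to F_\ell(y)$, and $F_\ell$ inherits (i)--(iii) by repeating the earlier arguments with $\Lambda$ in place of $\Omega$.

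I expect (iv) to be the only real obstacle: it forces one to control, simultaneously, the enlargement of the summation index set and the change of the superscript domain in $E_k^\Omega$, which works only because the locality estimates of Theorem \ref{theorem-thermodynamic-limit} come with $\Omega$-uniform constants and because {\bf L} supplies an $\Omega$-independent exponential tail. Parts (i)--(iii) are, by contrast, bookkeeping on top of Theorem \ref{theorem-thermodynamic-limit}, the only non-routine step being the exponent redistribution in (i).
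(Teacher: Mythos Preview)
Your proposal is correct and follows exactly the route the paper indicates: the paper does not give a detailed proof but simply states that the identity \eqref{Fl-El} together with Theorem~\ref{theorem-thermodynamic-limit} ``yields the thermodynamic limit of the force, as well as its regularity, locality, and isometry/permutation invariance.'' You have supplied precisely the details that this one-line justification suppresses --- the exponent redistribution and geometric summation for (i), the chain-rule differentiation of the total-energy invariances for (ii)--(iii), and the splitting plus dominated convergence for (iv) --- and your identification of the $\Omega$-uniformity of the constants in Theorem~\ref{theorem-thermodynamic-limit}(i) as the essential ingredient is on point.
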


\subsection{Crystalline defects}
\label{sec-defects}

\def\Rcore{R_{\rm DEF}}
\def\Adm{{\rm Adm}}
\def\E{\mathcal{E}}
\def\L{\Lambda}
\def\UsH{\dot{\mathscr{W}}^{1,2}}
\def\Usz{\dot{\mathscr{W}^{\rm c}}}
\def\DD{{\sf D}}
\def\ee{{\sf e}}

\subsubsection{Energy space} 
Let $\Lambda\subset\R^m$ be an infinite reference configuration satisfying
$\L \setminus B_{\Rcore} = (A \Z^m) \setminus B_{\Rcore}$ where
$\Rcore\geq 0$, $A \in {\rm SL}(m)$ and $\Lambda\cap B_{\Rcore}$ is finite.  
For analytical purposes, we assume that there is a regular partition 
$\mathcal{T}_{\Lambda}$ of $\R^m$ into triangles if $m=2$ and tetrahedra 
if $m=3$, whose nodes are the reference sites $\Lambda$ 
(see \ref{sec-interpolation} for interpolations of lattice functions 
on this background mesh).
% \co{[this is problematic; we already need that $\L \sim A \Z^m$??]}

We can decompose the deformation 
\begin{eqnarray}\label{y-u}
y(\ell) = y_0(\ell)+u(\ell) = P\ell + u_0(\ell) 
+ u(\ell) \quad\forall~\ell\in\Lambda,
\end{eqnarray}
where $u_0:\Lambda\rightarrow\mathbb{R}^d$ is a predictor prescribing the
far-field boundary condition, $u:\Lambda\rightarrow\mathbb{R}^d$ is a corrector,
and $P \in \R^{d \times m}$ denotes a macroscopically applied deformation.

If $\ell\in\Lambda$ and
$\ell+\rho\in\Lambda$, then we define the finite difference 
$D_\rho u(\ell) := u(\ell+\rho) - u(\ell)$. 
If $\Rg \subset \Lambda-\ell$, then we
define $D_\Rg u(\ell) := (D_\rho u(\ell))_{\rho\in\Rg}$,
and $Du(\ell) := D_{\Lambda-\ell} u(\ell)$.  
For a stencil $Du(\ell)$ and $\gamma > 0$ we define the (semi-)norms
\begin{eqnarray*}
\big|Du(\ell)\big|_\gamma := \bigg( \sum_{\rho \in \L-\ell} e^{-2\gamma|\rho|} 
\big|D_\rho u(\ell)\big|^2 \bigg)^{1/2}
\quad{\rm and}\quad
\| Du \|_{\ell^2_\gamma} := \bigg( \sum_{\ell \in \L} 
|Du(\ell)|_\gamma^2 \bigg)^{1/2}.
\end{eqnarray*}
An immediate consequence of \eqref{norm-equiv} is that all (semi-)norms
  $\|\cdot\|_{\ell^2_\gamma}, \gamma > 0,$ are equivalent. 

  We can now define the natural function space of finite-energy displacements,
\begin{displaymath}
  \UsH := \big\{ u : \L \to \R^d, \| Du \|_{\ell^2_\gamma} < \infty \big\}.
\end{displaymath}

\subsubsection{Site energy} Let $E_\ell$ denote the site energies we defined
in Theorem \ref{theorem-thermodynamic-limit}(iv). Because they are translation
invariant we can define $V_{\ell} : (\R^m)^{\L-\ell}\rightarrow\R$ by
\begin{eqnarray}
V_{\ell}(Du) := E_{\ell}(Px_0+u) \qquad{\rm with}\quad
x_0:\L\rightarrow\R^d ~~{\rm and}~~ x_0(\ell)=\ell~~\forall~\ell\in\L.
\end{eqnarray}
For a displacement $u$ satisfying $y_0+u\in\mathcal{V}_{\mathfrak{m}}(\Lambda)$,
we can define (formally, for now) the energy-difference functional
\begin{eqnarray}\label{energy-difference}
\mathcal{E}(u) := \sum_{\ell\in\Lambda}\Big(E_{\ell}(y_0+u)-E_{\ell}(y_0)\Big) = \sum_{\ell\in\Lambda}\Big(V_{\ell}(Du_0(\ell)+Du(\ell))-V_{\ell}(Du_0(\ell))\Big).
\end{eqnarray}

For both point defects and dislocations, we can construct predictors $y_0$
  (see \S \ref{sec-p} and \S \ref{sec-d}) such that $\del\E(0) \in (\UsH)^*$. We
  prove in \cite{chenpre_vardef} (see also \cite[Lemma 2.1]{ehrlacher13}) that,
  under this condition, $\E$ is well-defined on the space $\Adm_0$ and in fact
  $\E \in C^{\mathfrak{n}-1}(\Adm_0)$, where
  \begin{displaymath}
    \Adm_{\frak{m}} := \big\{ u \in \UsH, ~
    |y_0(\ell)+u(\ell)-y_0(m)-u(m)| > \frak{m} |\ell-m|
    \quad\forall~  \ell, m \in \L 
    %~\footnote{\hc{Similar to {\bf L}, this condition is not needed
    % for anti-plane models of pure screw dislocations.}} 
    \big\}.
  \end{displaymath}
  In \S \ref{sec-p} and \S \ref{sec-d} we show how the crucial condition
  $\del\E(0) \in (\UsH)^*$ is obtained for, respectively, point defects and
  dislocations. In \S~\ref{sec-uniform} we then present a unified description
  for which we then rigorously state the properties of $\E$ and the associated
  variational problem.

\subsubsection{Point defects}
\label{sec-p}

We make the following standing assumptions for point defects:

\def\asP{{\bf P}\xspace}

\begin{flushleft}
	{\bf P. }   \label{as:asP} 
	$m=d \in \{2, 3 \}$;
	$\exists ~\Rcore>0, A \in {\rm SL}(m)$ such that
	$\Lambda\backslash B_{\Rcore} = (A\mathbb{Z}^m)\backslash B_{\Rcore}$ and
	$\L \cap B_{\Rcore}$ is finite;
	$P={\bf Id}$;
	$u_0=0$;
	$y_0(\ell)=\ell$.
\end{flushleft}

\subsubsection{Dislocations}
\label{sec-d}
The following derivation is not essential to our analysis of QM/MM schemes,
  and can indeed be found in \cite{ehrlacher13}, however, for the sake of
  completeness, we still present enough detail to justify the unified
  formulation in \S~\ref{sec-uniform}.

We consider a model for straight dislocation lines obtained by projecting a 3D
crystal. For a 3D lattice $B\Z^3$ with dislocation direction parallel to $e_3$
and Burgers vector $\burg=(b_1,0,b_3)$, we consider displacements
$W:B\Z^3\rightarrow\R^3$ that are periodic in the direction of the dislocation
direction $e_3$. Thus, we choose a projected reference lattice
$\Lambda:=A\Z^2=\{(\ell_1,\ell_2) ~|~\ell=(\ell_1,\ell_2,\ell_3)\in B\Z^3\}$,
which is again a Bravais lattice. This projection gives rise to a projected 2D
site energy with the additional invariance
\begin{equation}
  \label{eq:projected-site-E-invariance}
  E_\ell(y) = E_\ell(y + z e_3) \qquad \forall z : \Lambda \to \burg_3 \mathbb{Z}
\end{equation}

% For simplicity of notations, we will write 
% $a_{12}=(a_1,a_2)$ for a vector $a\in\R^3$ here and throughout.
% We then identify $W(X)=w(X_{1,2})$ with $w:\Lambda\rightarrow\R^3$.

Let $\hat{x}\in\R^2$ be the position of the dislocation core and
$\Gamma := \{x \in \R^2~|~x_2=\hat{x}_2,~x_1\geq\hat{x}_1\}$ be the ``branch
cut'', with $\hat{x}$ chosen such that $\Gamma\cap\Lambda=\emptyset$.  Following
\cite{ehrlacher13}, we define the far-field predictor $u_0$ by
\begin{eqnarray}\label{predictor-u_0-dislocation}
u_0(x):=\ulin(\xi(x)),
\end{eqnarray}
where $\ulin \in C^\infty(\R^2 \setminus \Gamma; \R^d)$ is the continuum linear
elasticity solution (see \cite{ehrlacher13} for the details)
% satisfying
% \begin{eqnarray}
% \nonumber
% \mathbb{C}_{i\alpha}^{j\beta}\frac{\partial^2 \ulin_i}
% {\partial x_{\alpha}\partial x_{\beta}} =0
% & {\rm in}~\R^2\backslash\Gamma \\[1ex]
% \ulin(x^+)-\ulin(x^-)=-b
% & {\rm for}~ x\in\Gamma\backslash\{\hat{x}\} \\[1ex]
% \nonumber
% \nabla_{e_2}\ulin(x^+)-\nabla_{e_2}\ulin(x^-)=0
% & {\rm for}~ x\in\Gamma\backslash\{\hat{x}\}
% \end{eqnarray}
% with $\mathbb{C}$ being the linearised Cauchy-Born tensor 
% (see e.g. \cite{ehrlacher13}),
%
and % $\xi:\R^2\backslash\Gamma\rightarrow \R^2\backslash\Gamma$ is a map:
\begin{eqnarray}
\xi(x)=x-\burg_{12}\frac{1}{2\pi} 
\eta\left(\frac{|x-\hat{x}|}{\hat{r}}\right)
\arg(x-\hat{x}),
\end{eqnarray}
with $\arg(x)$ denoting the angle in $(0,2\pi)$ between $x$ and
$\burg_{12} = (\burg_1, \burg_2) = (\burg_1, 0)$, and
$\eta\in C^{\infty}(\R)$ with $\eta=0$ in $(-\infty,0]$, $\eta=1$ in
$[1,\infty)$ removes the singularity.

The predictor $y_0 = Px + u_0$ is constructed in such a way that $y_0$
  jumps across $\Gamma$, which encodes the presence of the dislocation. But
  there is an ambiguity in this definition in that we could have equally placed
  the jump into the left half-plane $\{ x_1 \leq \hat{x}_1 \}$. The role of
  $\xi$ in the definition of $u_0$ is that applying a plastic slip across the
  plane $\{ x_2 = \hat{x}_2 \}$ via the definition
  \begin{displaymath}
    y^S(x) := \left\{
      \begin{array}{ll}
        y(\ell), & \ell_2 > \hat{x}_2, \\
        y(\ell-\burg_{12}) - \burg_3 e_3, & \ell_2 < \hat{x}_2
      \end{array}
    \right.
  \end{displaymath}
  achieves exactly this transfer: it leaves the (3D) configuration invariant,
  while generating a new predictor $y_0^S \in C^\infty(\Omega_\Gamma)$ where
  $\Omega_\Gamma = \{ x_1 > \hat{x}_1 + \hat{r} + \burg_1 \}$. Since the
  map $y \mapsto y^S$ represents a relabelling of the atom indices and an
  integer shift in the out-of-plane direction, we can apply
  \eqref{eq:projected-site-E-invariance} and Theorem
  \ref{theorem-thermodynamic-limit} (iii) to obtain
  \begin{equation}
    \label{eq:perm-invariance-site-E-dislocations}
    E_\ell(y) = E_{S^* \ell}(y^S),
  \end{equation}
  where $S$ is the $\ell^2$-orthogonal operator with inverse $S^* = S^{-1}$
  defined by
  \begin{align*}
    Su(\ell) 
    := \left\{ \begin{array}{ll} u(\ell), & \ell_2 > \hat{x}_2, \\
        u(\ell-\burg_{12}), & \ell_2 < \hat{x}_2 
      \end{array} \right.
    \quad {\rm and} \quad
    S^* u(\ell) 
    := \left\{ \begin{array}{ll} u(\ell), & \ell_2 > \hat{x}_2, \\
        u(\ell+\burg_{12}), & \ell_2 < \hat{x}_2.  
      \end{array}\right.
  \end{align*}
  
% the (3D) configuration invariant
%  slip operator
% \begin{align*}
%   S_0 w(\ell) 
%   := \left\{ \begin{array}{ll} w(\ell), & \ell_2 > \hat{x}_2, \\
%                w(\ell-\burg_{12}) - \burg, & \ell_2 < \hat{x}_2 
%              \end{array} \right.
% \end{align*}
% achieves exactly this transfer: if $\hat{r}$ is sufficiently large, then
% $S_0 u_0 \in C^\infty(\Omega_\Gamma)$ where
% $\Omega_\Gamma = \{ x_1 > \hat{x}_1 + \hat{r} + \burg_1 \}$. Further
% $y^S_0 := P x + S_0 u_0$ represents only a relabelling of the atom indices,
% that is, it does not change the configuration. 

  We can translate~\eqref{eq:perm-invariance-site-E-dislocations} to a statement
  about $u_0$ and $V_\ell$. Let $S_0 w(x) = w(x), x_2 > \hat{x}_2$ and
  $S_0 w(x) = w(x-\burg_{12}) - \burg, x_2 < \hat{x}_2$, then we obtain that
  $y_0^S = P x + S_0 u_0$ and $S_0 u_0 \in C^\infty(\Omega_\Gamma)$ and
  $S_0 (u_0 + u) = S_0 u_0 + S u$.
The permutation invariance \eqref{eq:perm-invariance-site-E-dislocations} can
now be rewritten as an invariance of $V_\ell$ under the slip $S_0$:
\begin{eqnarray}\label{slip-invariance}
V\big(D(u_0+u)(\ell)\big) = V\big(e(\ell)+\widetilde{D}u(\ell)\big) \qquad
\forall u \in \Adm_0,~\ell\in\Lambda
\end{eqnarray}
where
\begin{equation}\label{eq:defn_elastic_strain}
e(\ell) := (e_\rho(\ell))_{\rho\in\L-\ell} \quad \text{with} \quad
e_\rho(\ell) 
:= \left\{ \begin{array}{ll}
	S^* D_\rho S_0u_0(\ell), & \ell\in\Omega_{\Gamma}, \\
	D_\rho u_0(\ell), & \text{otherwise,} 
	\end{array} \right.
\end{equation}
and
\begin{equation}\label{eq:defn_Dprime}
\widetilde{D} u(\ell) := 
(\widetilde{D}_\rho u(\ell))_{\rho\in\L-\ell} \quad \text{with} \quad
\widetilde{D}_\rho u(\ell) := 
	\left\{ \begin{array}{ll}
	S^* D_\rho Su(\ell), & \ell \in\Omega_{\Gamma}, \\
	D_\rho u(\ell), & \text{otherwise.}  
	\end{array} \right.
\end{equation}

The following lemma, proven in \cite{chenpre_vardef}, is a straightforward
extension of \cite[Lemma 3.1]{ehrlacher13}.

\begin{lemma}\label{lemma-decay-el}
  If the  predictor $u_0$ is defined by \eqref{predictor-u_0-dislocation} and
  $e(\ell)$ is given by \eqref{eq:defn_elastic_strain}, then
  % for $|\rho|,|\sigma| \leq |\ell|/2$, 
  there exists a constant $C$ such that
  \begin{eqnarray}\label{decay-el}
    % |e(\ell)|_{\gamma} \leq C |\ell|^{-1}
    |e_{\sigma}(\ell)| \leq C |\sigma| \cdot |\ell|^{-1}
    \qquad{\rm and}\qquad
    |D_{\rho} e_{\sigma}(\ell)|_{\gamma} \leq C
    |\rho| \cdot |\sigma| \cdot |\ell|^{-1} .
  \end{eqnarray}
\end{lemma}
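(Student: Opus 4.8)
The plan is to reduce both estimates to classical regularity and decay properties of the continuum linear elasticity solution $\ulin$ and of the geometric correction $\xi$, and then to control the finite differences $D_\rho$ and the slip operation $S^*(\cdot)S_0$ by elementary calculus. First I would observe that, away from the branch cut, $u_0 = \ulin \circ \xi$ is smooth and that the leading-order behaviour of $\ulin$ near the dislocation core is that of a standard Volterra construction: $\nabla \ulin(x) = O(|x|^{-1})$ and, more generally, $|\nabla^k \ulin(x)| \leq C |x|^{-k}$ for $|x|$ bounded away from the core (these are exactly the estimates recalled in \cite{ehrlacher13}). Since $\xi(x) - x$ is supported in $|x - \hat x| \leq \hat r$ and is smooth, for $|\ell|$ large we simply have $\xi(\ell) = \ell$ and hence $u_0(\ell) = \ulin(\ell)$, so it suffices to estimate the finite differences of $\ulin$ on the lattice, together with a finite-range perturbation near the core which is harmless because $\Lambda \cap B_{\Rcore}$ is finite and all constants are allowed to depend on the core geometry.

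The key step for the first bound is the finite-difference-to-derivative estimate: for a $C^2$ function $w$ defined on a neighbourhood of the segment $[\ell, \ell+\rho]$ one has $|D_\rho w(\ell)| \leq |\rho| \sup_{[\ell,\ell+\rho]} |\nabla w|$, and one iterates this for the second estimate, $|D_\rho D_\sigma w(\ell)| \leq |\rho||\sigma| \sup |\nabla^2 w|$. Applying this to $w = \ulin$ and using $|\nabla \ulin| \leq C|x|^{-1}$, $|\nabla^2 \ulin| \leq C|x|^{-2}$ on the segment — valid since for $|\ell|$ large the whole segment stays in the region $|x| \geq \tfrac12|\ell|$ — gives $|D_\sigma u_0(\ell)| \leq C|\sigma||\ell|^{-1}$ and $|D_\rho D_\sigma u_0(\ell)| \leq C|\rho||\sigma||\ell|^{-2}$. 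For the $|\cdot|_\gamma$-seminorm in the second estimate I would then sum over $\sigma \in \Lambda - \ell$ against the weights $e^{-2\gamma|\sigma|}$: the sum $\sum_\sigma e^{-2\gamma|\sigma|}|\sigma|^2$ converges, so $|D_\rho e_\sigma(\ell)|_\gamma \leq C|\rho| |\ell|^{-1}$ follows (note the stated bound has $|\ell|^{-1}$, not $|\ell|^{-2}$ — the extra decay of the second difference is simply discarded, which is consistent).

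The remaining point is to handle the slip region $\Omega_\Gamma = \{x_1 > \hat x_1 + \hat r + \burg_1\}$, where $e_\rho(\ell) = S^* D_\rho S_0 u_0(\ell)$ rather than $D_\rho u_0(\ell)$. Here I would use that $S_0$ acts, on the lower half $\ell_2 < \hat x_2$, by $w \mapsto w(\cdot - \burg_{12}) - \burg$ and $S^*$ by an integer relabelling shift; the constant $\burg$ drops out under the difference $D_\rho$, and the index shift by $\burg_{12}$ moves the evaluation point by a bounded amount, so for $|\ell|$ large it changes $|\ell|$ only by a multiplicative constant. Concretely, $S^* D_\rho S_0 u_0(\ell)$ is, up to the dropped constant, a finite difference of $\ulin$ between two points each within bounded distance of $\ell$ and of $\ell + \rho$, and the same first/second-derivative bounds apply on the relevant segment (which lies in $\{|x| \geq c|\ell|\}$). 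The main obstacle, and the only place requiring care, is precisely this bookkeeping near $\Gamma$: one must verify that the branch cut $\Gamma$ and the segments used in the mean value estimates do not intersect — this is why $\hat x$ is chosen with $\Gamma \cap \Lambda = \emptyset$ and why $\xi$ is built to push the jump onto $\{x_2 = \hat x_2\}$ within $\Omega_\Gamma$ — so that $\ulin$ (or its slipped version $S_0 u_0$) is genuinely $C^\infty$ along each segment of integration. Once this geometric compatibility is checked, both inequalities in \eqref{decay-el} follow by the elementary estimates above, exactly as in \cite[Lemma 3.1]{ehrlacher13}; the present statement is only a mild repackaging in terms of the $|\cdot|_\gamma$ seminorm.
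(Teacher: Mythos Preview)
The paper does not actually include a proof of this lemma; it only remarks that it is ``proven in \cite{chenpre_vardef}'' as ``a straightforward extension of \cite[Lemma 3.1]{ehrlacher13}''. Your outline---reducing to the standard decay $|\nabla^k \ulin(x)| \leq C|x|^{-k}$ of the Volterra solution, converting finite differences to derivative bounds via the mean value theorem along segments that stay in $\{|x|\geq c|\ell|\}$, and handling $S^* D_\rho S_0 u_0$ by observing that $S_0,S^*$ amount to a bounded index shift plus a constant that drops under $D_\rho$---is precisely the argument in \cite{ehrlacher13}, so your approach is the intended one.

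One small comment: your reading of the $|\cdot|_\gamma$ in the second estimate as a sum over $\sigma$ conflicts with the explicit $|\sigma|$ factor on the right-hand side of \eqref{decay-el}; most likely the subscript $\gamma$ there is either a typo for the Euclidean norm or is meant in a different index, and the natural pointwise bound your argument gives is $|D_\rho e_\sigma(\ell)| \leq C|\rho|\,|\sigma|\,|\ell|^{-2}$, of which the stated $|\ell|^{-1}$ is a weakening. This does not affect the correctness of your argument.
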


We now summarise our standing assumptions for dislocations:

\def\asD{{\bf D}\xspace}

\begin{flushleft}
	{\bf D. }   \label{as:asP} 
	$m=2,~d=3$;
	$\L=A\Z^m$;
	$P(\ell_1,\ell_2)=(\ell_1,\ell_2,\ell_3)$;
	$u_0$ is defined by \eqref{predictor-u_0-dislocation};
	$y_0(\ell)=\ell + u_0(\ell)$.
	
	%\hc{\#\# fit this assumption to that in \cite{chenpre_vardef} later?}
\end{flushleft}

\begin{remark}\label{remark-anti-in-plane}
  One can treat anti-plane models of pure screw dislocations by admitting
  % deformations of the form $y = Px + u_0 + u$ where
  displacements of the form $u_0 = (0, 0, u_{0,3})$ and $u = (0, 0, u_3)$.
  Similarly, one can treat the in-plane models of pure edge dislocations by
  admitting displacements of the form $u_0 = (u_{0,1}, u_{0,2}, 0)$ and
  $u = (u_1, u_2, 0)$ \cite{ehrlacher13}.  For anti-plane models the atoms do
  not accumulate and the condition {\bf L} can be ignored.
\end{remark}

\subsubsection{Unified formulation}
\label{sec-uniform}
In order to consider the point defect and dislocation cases within a unified
notation we introduce the following notation.  Let
\begin{eqnarray*}
	u_0(\ell) := \left\{ \begin{array}{ll}
		0 & {\rm if}~{\bf P} \\
		\eqref{predictor-u_0-dislocation} & {\rm if}~{\bf D}
	\end{array} \right. ,
	\quad
	\ee(\ell) := \left\{ \begin{array}{ll}
		% \L-\ell --- {\bf 0} 
		{\bf 0} & {\rm if}~{\bf P} \\
		e(\ell) & {\rm if}~{\bf D}
	\end{array} \right.
	\quad{\rm and}\quad
	\DD u(\ell) := \left\{ \begin{array}{ll}
		Du(\ell) & {\rm if}~{\bf P} \\
		\widetilde{D}u(\ell) & {\rm if}~{\bf D}
	\end{array} \right. .
\end{eqnarray*}
Using the assumption $u_0=0$ in {\bf P} for point defects and the slip
invariance condition \eqref{slip-invariance} for dislocations, we can rewrite
the energy difference functional \eqref{energy-difference} as
\begin{eqnarray}\label{E-diff}
\E(u) = \sum_{\ell\in \L} 
\Big( V_{\ell}\big( \ee(\ell) + \DD u(\ell) \big) 
- V_{\ell}\big( \ee(\ell) \big) \Big)
\end{eqnarray}
for both point defects and dislocations. The following result is proven in
\cite{chenpre_vardef}, extending an analogous result in \cite{ehrlacher13} which
is restricted to finite-range site energies.

\begin{lemma}
  Suppose that {\bf P} or {\bf D} is satisfied, then $\E$ is well-defined 
  on $\Usz \cap \Adm_0$, where
  \begin{displaymath}
    \Usz := \big\{ u : \L \to \R^d \,\big|\, \exists R > 0 \text{ s.t. }  u = {\rm
    const} \text{ in } \L \setminus B_R \big\},
  \end{displaymath}
  and continuous with respect to the $\UsH$-topology. Therefore, there exists a
  unique continuous extension to $\UsH$ which belongs to
  $C^{\mathfrak{n}-1}(\UsH)$.
\end{lemma}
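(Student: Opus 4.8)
The plan is to reduce the statement to a single scale-separated estimate on the site potentials $V_\ell$ and then run the lattice-sum argument of \cite{ehrlacher13,chenpre_vardef}. The key preliminary is that the derivatives of $V_\ell$ decay exponentially in the \emph{stencil index}: there are constants $C_j$ and $\gamma_0>0$, independent of $\ell$, such that
\begin{equation*}
  \Big|\frac{\partial^j V_\ell(\bg)}{\partial\bg_{\rho_1}\cdots\partial\bg_{\rho_j}}\Big|
  \le C_j\, e^{-2\gamma_0(|\rho_1|+\cdots+|\rho_j|)}, \qquad 1\le j\le\mathfrak{n}-1 ,
\end{equation*}
for every admissible argument $\bg$. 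Indeed $V_\ell(Du)=E_\ell(Px_0+u)$, and the partial derivative of $V_\ell$ in the difference variable attached to $\rho$ is, after relabelling, $\partial E_\ell/\partial y(\ell+\rho)$; Theorem~\ref{theorem-thermodynamic-limit}(i) gives the decay $e^{-\eta_j|y(\ell)-y(\ell+\rho)|}$, and the non-accumulation assumption {\bf L} turns this into $e^{-\eta_j\mathfrak{m}|\rho|}$ (iterate for $j\ge2$). By Cauchy--Schwarz against the weight $e^{-2\gamma|\rho|}$ with any $0<\gamma<\gamma_0$ — using that $\sum_{\rho\in\L-\ell}e^{-2(\gamma_0-\gamma)|\rho|}$ is bounded uniformly in $\ell$ — this yields the $j$-linear bound $\big|\del^j V_\ell(\bg)[\chi_1,\dots,\chi_j]\big|\le C_j\prod_{k=1}^j|\chi_k|_\gamma$, which is the form we will actually use.

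First I would settle well-definedness and the basic estimate. For $u\in\Usz\cap\Adm_0$, expand each summand of \eqref{E-diff} to second order,
\begin{equation*}
  V_\ell\big(\ee(\ell)+\DD u(\ell)\big)-V_\ell\big(\ee(\ell)\big)
  = \del V_\ell(\ee(\ell))[\DD u(\ell)] + \int_0^1(1-t)\,\del^2 V_\ell\big(\ee(\ell)+t\DD u(\ell)\big)\big[\DD u(\ell)^{\otimes2}\big]\dd t .
\end{equation*}
The quadratic remainder is bounded by $C|\DD u(\ell)|_\gamma^2$ by the stencil estimate, and $\sum_{\ell\in\L}|\DD u(\ell)|_\gamma^2=\|\DD u\|_{\ell^2_\gamma}^2\le C\|u\|_{\UsH}^2$; for dislocations the last inequality uses the $\ell^2_\gamma$-boundedness of the slip operators entering $\DD=\widetilde D$ (cf.\ \eqref{eq:defn_Dprime}) together with the equivalence of the $\|\cdot\|_{\ell^2_\gamma}$-norms. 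The linear part sums to $\langle\del\E(0),u\rangle$, which is controlled by $\|u\|_{\UsH}$ because of the standing hypothesis $\del\E(0)\in(\UsH)^*$ — an essentially short-range statement for point defects (where $\ee\equiv0$), and, for dislocations, the content of Lemma~\ref{lemma-decay-el} combined with the structure of the predictor \eqref{predictor-u_0-dislocation} (see \cite{ehrlacher13,chenpre_vardef}). Hence $\big|\E(u)-\langle\del\E(0),u\rangle\big|\le C\|u\|_{\UsH}^2$, so the series converges absolutely; combined with the analogous bound for a difference of two displacements this shows $\E$ is locally Lipschitz — in particular continuous — on $\Usz\cap\Adm_0$ in the $\UsH$-topology.

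For the regularity and the extension I would show that, for each $2\le k\le\mathfrak{n}-1$, the formal $k$-th variation
\begin{equation*}
  \del^k\E(u)[v_1,\dots,v_k] = \sum_{\ell\in\L}\del^k V_\ell\big(\ee(\ell)+\DD u(\ell)\big)\big[\DD v_1(\ell),\dots,\DD v_k(\ell)\big]
\end{equation*}
converges absolutely and defines a bounded $k$-linear form on $\UsH$ with $\big|\del^k\E(u)[v_1,\dots,v_k]\big|\le C\prod_i\|v_i\|_{\UsH}$: bound each summand by $C\prod_i|\DD v_i(\ell)|_\gamma$ and sum using $\sum_\ell\prod_i a_i(\ell)\le\|a_1\|_{\ell^2}\|a_2\|_{\ell^2}\prod_{i\ge3}\|a_i\|_{\ell^\infty}$. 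The map $u\mapsto\del^k\E(u)$ is continuous for $k\le\mathfrak{n}-2$ by the $(k+1)$-linear bound and the mean value theorem, and for $k=\mathfrak{n}-1$ by continuity of $\bg\mapsto\del^{\mathfrak{n}-1}V_\ell(\bg)$ (inherited uniformly in $\ell$ from $h_{\rm ons},h_{\rm hop}\in C^{\mathfrak{n}}$) together with dominated convergence in the $\ell$-sum. A standard telescoping/Taylor argument then identifies these forms with the Fr\'echet derivatives of $\E$, so $\E\in C^{\mathfrak{n}-1}(\Usz\cap\Adm_0)$. Finally, $\Usz$ is dense in $\UsH$ and the non-admissible displacements form a countable union of closed, nowhere-dense affine subsets, so $\Usz\cap\Adm_0$ is dense in $\UsH$; since all of the above bounds are uniform on $\UsH$-bounded sets, $\E$ and its first $\mathfrak{n}-1$ variations are uniformly continuous on bounded subsets of $\Usz\cap\Adm_0$ and hence extend uniquely and continuously to all of $\UsH$, the extension lying in $C^{\mathfrak{n}-1}(\UsH)$.

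I expect the main obstacle to be the bookkeeping in the dislocation case, where three points need care: (i) that the conjugated differences $\widetilde D_\rho u(\ell)=S^*D_\rho Su(\ell)$ still enjoy the stencil decay and satisfy $\|\widetilde D u\|_{\ell^2_\gamma}\le C\|Du\|_{\ell^2_\gamma}$, which rests on the $\ell^2$-orthogonality of $S,S^*$; (ii) that the reference stencil $\ee(\ell)$ — and hence the whole segment $\ee(\ell)+t\DD u(\ell)$, $t\in[0,1]$ — remains in the admissible domain of $V_\ell$ on which Theorem~\ref{theorem-thermodynamic-limit}(i) applies, which uses the decay $|e_\sigma(\ell)|\le C|\sigma|\,|\ell|^{-1}$ of Lemma~\ref{lemma-decay-el} together with the slip invariance \eqref{slip-invariance}; and (iii) the genuinely delicate point, already isolated as the hypothesis $\del\E(0)\in(\UsH)^*$ and proven in \cite{ehrlacher13,chenpre_vardef} via discrete summation by parts and the symmetry of the elasticity predictor, which we simply invoke. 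The point-defect estimates are then a routine specialisation.
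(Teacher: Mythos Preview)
The paper does not prove this lemma; it is stated with the attribution ``proven in \cite{chenpre_vardef}, extending an analogous result in \cite{ehrlacher13}.'' Your sketch is exactly the standard argument from those references: translate the site-energy locality of Theorem~\ref{theorem-thermodynamic-limit}(i) into exponential decay of $V_{\ell,\bm\rho}$ in the stencil indices, use a second-order Taylor expansion to isolate $\langle\delta\E(0),u\rangle$ (which is the precise content of the standing hypothesis $\delta\E(0)\in(\UsH)^*$), control the remainder by $\|\DD u\|_{\ell^2_\gamma}^2$, and then bound the higher variations as multilinear forms. So there is nothing to compare --- your approach \emph{is} the paper's intended one.

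One point to tighten. In your final step you claim the bounds are ``uniform on $\UsH$-bounded sets'' and deduce an extension to all of $\UsH$. But the constants $C_j,\eta_j$ in Theorem~\ref{theorem-thermodynamic-limit}(i) depend on the accumulation parameter $\mathfrak{m}$ and blow up as $\mathfrak{m}\to 0$, i.e.\ as $u$ approaches $\partial\Adm_0$. The estimates are therefore locally uniform only on $\Adm_{\mathfrak m}$ for each fixed $\mathfrak m>0$, which gives $\E\in C^{\mathfrak{n}-1}(\Adm_0)$ but not on all of $\UsH$. This matches what the paper actually states in the paragraph immediately preceding the lemma (``$\E\in C^{\mathfrak{n}-1}(\Adm_0)$''); the appearance of $\UsH$ in the lemma itself is a slight imprecision you should not try to reproduce. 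Your density argument for $\Usz\cap\Adm_0$ in $\Adm_0$ is fine and is all that is needed.
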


Having a well-defined energy-difference functional, the equilibrium state can be
determined by solving the variational problem
\begin{equation}\label{eq:appl:variational-problem}
  \bar u \in \arg\min \big\{ \E(u), u \in \Adm_0 \big\},
\end{equation}
where ``$\arg\min$'' is understood in the sense of local minimality.

The next result is an extension of \cite[Theorem 2.3 and 3.5]{ehrlacher13},
which gives the decay estimates for the equilibrium state for point defects and
dislocations (see \cite{chenpre_vardef} for the proof).

\begin{theorem}	\label{th:appl:regularity}
  Suppose that either {\bf P} or {\bf D} is satisfied. If $\bar u \in \Adm_0$ is
  a strongly stable solution to \eqref{eq:appl:variational-problem}, that is,
	\begin{equation}
		\label{eq:appl:strong-stab}
		\exists~ \bar{c} > 0 \text{ s.t. } 
		\big\< \delta^2 \E(\bar u) v, v\big\> \geq \bar{c} 
		\| Dv \|_{\ell^2_\gamma}^2	\qquad \forall v \in\UsH,
	\end{equation}
	then there exists a constant $C > 0$ and $\bar{u}_{\infty}\in\R^d$ such that
%
% for $1\leq j\leq\mathfrak{n}-3$, 
% $\bar{u}$ satisfies the decay
%	\begin{align}\label{decay-estimate}
%		|\DD \bar{u}(\ell)|_\gamma \leq 
%		\left\{ \begin{array}{ll}
%		C (1+|\ell|)^{-m},  & {\rm  if}~{\bf P} \\[1ex]
%		C (1+|\ell|)^{-2}\log|\ell|,   & {\rm  if}~{\bf D}
%		\end{array}  \right.
%		\qquad \forall~\ell\in\L.
%	\end{align}
%	\hc{
%	Moreover, there exists $\bar{u}_{\infty}\in\R^d$ such that
%	\begin{eqnarray}\label{decay-estimate-u-infty}
%          |\bar{u}-\bar{u}_{\infty}|\leq C |\ell| \cdot
% |\DD \bar{u}(\ell)|_\gamma \qquad \forall~\ell\in\L.
%	\end{eqnarray}	
%	}
%
\begin{align}\label{decay-estimate}
|{\sf D}\bar{u}(\ell)|_\gamma &\leq C (1+|\ell|)^{-m} \log^t (2+|\ell|), 
\\[1ex] \label{decay-estimate-uinfty}
|\bar{u}(\ell)-\bar{u}_\infty| &\leq C (1+|\ell|)^{1-m} \log^t (2+|\ell|),
\end{align}
where $t = 0$ for case {\bf P} and $t = 1$ for case {\bf D}.
\end{theorem}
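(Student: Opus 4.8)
The plan is to establish the decay estimates for the equilibrium $\bar u$ by combining the strong stability assumption \eqref{eq:appl:strong-stab} with the locality of the site energies (Theorem \ref{theorem-thermodynamic-limit}), following the strategy of \cite{ehrlacher13}, which we adapt to the exponentially-localised (rather than finite-range) site energies treated here. The starting observation is the Euler--Lagrange equation $\delta\E(\bar u) = 0$; combining this with strong stability yields an invertibility of the linearised operator $\delta^2\E(\bar u)$ on $\UsH$, with bounds uniform in the system. The heuristic is that $\bar u$ should inherit the decay of the predictor data: for point defects the ``right-hand side'' $\delta\E(0)$ is exponentially localised near the defect core, while for dislocations the elastic strain $\ee(\ell)$ decays only like $|\ell|^{-1}$ by Lemma \ref{lemma-decay-el}, which is what will force the slower algebraic rate and the appearance of the $\log$ factor ($t=1$).

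First I would linearise: write $0 = \delta\E(\bar u) - \delta\E(0) + \delta\E(0)$, so that $\delta^2\E(0)\bar u = -\big(\delta\E(\bar u) - \delta\E(0) - \delta^2\E(0)\bar u\big) - \delta\E(0)$, or more conveniently use the homogeneous (defect-free) linearised operator $\Ham_{\rm hom}$ as a reference. The key analytic tool is a regularity/Combes--Thomas type estimate for $\Ham_{\rm hom}^{-1}$ (equivalently, a discrete Calder\'on--Zygmund / Schauder estimate on the triangulation $\mathcal{T}_\Lambda$): if $\Ham_{\rm hom} v = g$ with $g(\ell)$ decaying at an algebraic rate, then $Dv(\ell)$ decays at the same rate, up to a possible logarithmic correction in the borderline dimension. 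This is exactly the lattice analogue of the decay of the elasticity Green's function $|\nabla G(x)| \lesssim |x|^{-m}$ in $d=m$ dimensions. One then bootstraps: write $\bar u$ as the solution of $\Ham_{\rm hom}\bar u = g[\bar u]$ where $g[\bar u]$ collects (i) the defect-core contribution $-\delta\E(0)$ (exponentially small away from the core), (ii) the difference $(\Ham_{\rm hom} - \delta^2\E(0))\bar u$ coming from the defect perturbing the Hessian (again localised near the core by Theorem \ref{theorem-thermodynamic-limit}(i)), (iii) for dislocations the strain-dependent terms involving $\ee(\ell)$, which by Lemma \ref{lemma-decay-el} decay like $|\ell|^{-1}$, and (iv) the nonlinear remainder, which is quadratic in $D\bar u$ and hence decays faster. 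Feeding the a priori bound $\bar u \in \UsH$ (so $\|D\bar u\|_{\ell^2_\gamma} < \infty$, giving a crude pointwise decay) into the right-hand side and iterating the regularity estimate improves the rate step by step until it saturates at the rate dictated by the slowest source term: exponential decay of the core terms is irrelevant, and it is the $|\ell|^{-1}$ strain (in case {\bf D}) together with the convolution against a kernel decaying like $|\ell|^{-m}$ that produces \eqref{decay-estimate} with the logarithm. For case {\bf P} all source terms are exponentially localised, so the convolution with the $|\ell|^{-m}$ kernel gives the clean $|\ell|^{-m}$ rate with $t=0$. Finally \eqref{decay-estimate-uinfty} follows from \eqref{decay-estimate} by summing the finite differences along a path from $\ell$ to infinity: $\bar u(\ell) - \bar u_\infty = -\sum$ of $D\bar u$ terms, and $\sum_{|k| \geq |\ell|} |k|^{-m} \lesssim |\ell|^{1-m}$, with the $\log$ carried along in case {\bf D}; the existence of the limit $\bar u_\infty$ is part of this argument.

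The main obstacle I expect is the regularity estimate for the linearised operator at exactly the borderline scaling $m = d$, where the naive bound $|Dv(\ell)| \lesssim |\ell|^{-m}$ from convolving the Green's function $|\nabla^2 G| \sim |x|^{-m}$ with an $|x|^{-1}$ source is only barely divergent and produces the logarithm rather than a clean power — controlling this requires care with the cancellation structure (one cannot simply take absolute values inside the sum), and one typically exploits that $\delta^2\E(0)$ has the structure of a discretised second-order elliptic operator in divergence form with exponentially decaying coefficients, so that $\Ham_{\rm hom}^{-1}$ acts like a pseudodifferential operator of order $-2$ whose kernel derivatives satisfy the stated bounds. For a dislocation this is further complicated by the branch cut and the slip operator $S$: one must verify that the slip-transformed difference operator $\widetilde D$ still yields an operator of the correct elliptic type on $\Omega_\Gamma$, and that the estimates are uniform across the cut. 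Fortunately all of this is carried out in \cite{chenpre_vardef} extending \cite{ehrlacher13}; in the present paper it suffices to invoke that result, so the ``proof'' here is essentially a pointer to \cite{chenpre_vardef} together with the observation that the exponential locality of the TB site energies from Theorem \ref{theorem-thermodynamic-limit} is exactly what is needed to apply the finite-range arguments of \cite{ehrlacher13} with only cosmetic modifications (replacing finite sums by exponentially convergent ones).
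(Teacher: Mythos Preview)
Your proposal is correct and, in fact, goes well beyond what the paper does: the paper provides no proof at all for this theorem, simply citing \cite{chenpre_vardef} (extending \cite[Theorems 2.3 and 3.5]{ehrlacher13}) for the argument. You correctly identify this at the end of your proposal, and the sketch you give --- linearise around the homogeneous reference, invoke Green's-function decay for the discrete elliptic operator, bootstrap from the a~priori $\UsH$ bound with the exponentially localised core terms and (in case {\bf D}) the $|\ell|^{-1}$ predictor strain as source, then integrate along a path to get \eqref{decay-estimate-uinfty} --- is exactly the strategy of \cite{ehrlacher13,chenpre_vardef}.
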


\begin{remark}
  It can be immediately seen that $|D\bar{u}(\ell)|_\gamma$ satisfies the same
  estimate as \eqref{decay-estimate}.
\end{remark}

\section{Energy-mixing}
\label{sec-e-mix-idea}
\setcounter{equation}{0}
\def\LQM{\Lambda^{\rm QM}}
\def\LMM{\Lambda^{\rm MM}}
\def\LFF{\Lambda^{\rm FF}}
\def\Lbuf{\Lambda^{\rm BUF}}
\def\RQM{R_{\rm QM}}
\def\RMM{R_{\rm MM}}
\def\RFF{R_{\rm FF}}
\def\Rbuf{R_{\rm BUF}}
\def\VMM{V^{\rm MM}}
\def\Vb{V^{\rm BUF}_{\#}}
\def\EH{\mathcal{E}^{\rm H}}
\def\uH{\bar{u}^{\rm H}}
\def\wD{\widetilde{D}}
\def\AH{\Adm^{\rm H}_0}
\def\Usx{\mathscr{W}^{\rm H}}

\subsection{Formulation of QM/MM energy mixing}
Following the outline in \S~\ref{sec-e-idealized}, we construct approximations
to the tight binding site energy
% $E_{\ell}(y)\approx E_{\ell}^{\rm MM}(y)$ (or
$V_{\ell}({\bm g})\approx V_{\ell}^{\rm MM}({\bm g})$ for
${\bm g}\in(\R^m)^{\L-\ell}$ by Taylor's expansion, and approximate the energy
difference functional by
\begin{multline}\label{energy-appro} 
	\mathcal{E}(u) 
	% \approx \sum_{\ell\in {\rm QM}} 
	% \big(E_{\ell}(y_0+u)-E_{\ell}(y_0)\big) 
	% + \sum_{\ell\in {\rm MM}} \big(E^{\rm MM}_{\ell}(y_0+u)
	% - E^{\rm MM}_{\ell}(y_0)\big)  
	% \\[1ex]
	\approx \sum_{\ell\in \L^{\rm QM}} 
	\Big( V_{\ell}\big( \ee(\ell) + \DD u(\ell) \big) 
	- V_{\ell}\big( \ee(\ell) \big) \Big)
	+ \sum_{\ell\in \L^{\rm MM}} 
	\Big( V_{\ell}^{\rm MM}\big( \ee(\ell) + \DD u(\ell) \big) 
	- V_{\ell}^{\rm MM}\big( \ee(\ell) \big) \Big).
\end{multline}
Since minimising \eqref{energy-appro} over $u \in \Adm_0$ is an
infinite-dimensional problem, we will also approximate the space of trial
functions.

\subsubsection{Decomposition of $\L$}
\label{sec:decomposition-regions}
We decompose the reference configuration $\L$ into three disjoint sets,
$\Lambda = \LQM\cup \LMM\cup \LFF$, where
$\LQM$ denotes the QM region, $\LMM$ the MM region and
$\LFF$ the {\em far-field} where atom positions will be frozen to those
given by the far-field predictor. Moreover, we define a buffer region
$\Lbuf \subset \LMM$ surrounding $\LQM$ such
that all atoms in $\Lbuf$ are involved in the tight binding calculation when
evaluating the site energies in $\LQM$.

For simplicity, we use balls centred at the point defect or dislocation core to
decompose $\Lambda$, and use parameters $\RQM$, $\RMM$ and $\Rbuf$ to represent
the respective radii, that is,
\begin{displaymath}
	\LQM = B_{\RQM} \cap \L, \qquad \LMM = B_{\RMM} \cap \L \setminus \LQM, 
	\qquad \Lbuf = (B_{\RQM+\Rbuf} \cap \L) \setminus \LQM,
\end{displaymath}
and $\LFF = \L \setminus (\LQM \cup \LMM)$.  See Figure \ref{fig-decomposition}
for a visualisation of this decomposition. 

% cut frm co's slides
\begin{figure}[ht]
	\centering
	% \includegraphics[height=6.0cm]{fig/reference}
	% \hskip 1.5cm
	% \includegraphics[height=6.0cm]{fig/decomposition}
	% \caption{Decomposition of the reference configuration.
	%   Left: the reference configuration. Right: the decomposition and
	%   corresponding approximation parameters.}
	\includegraphics[width=10cm]{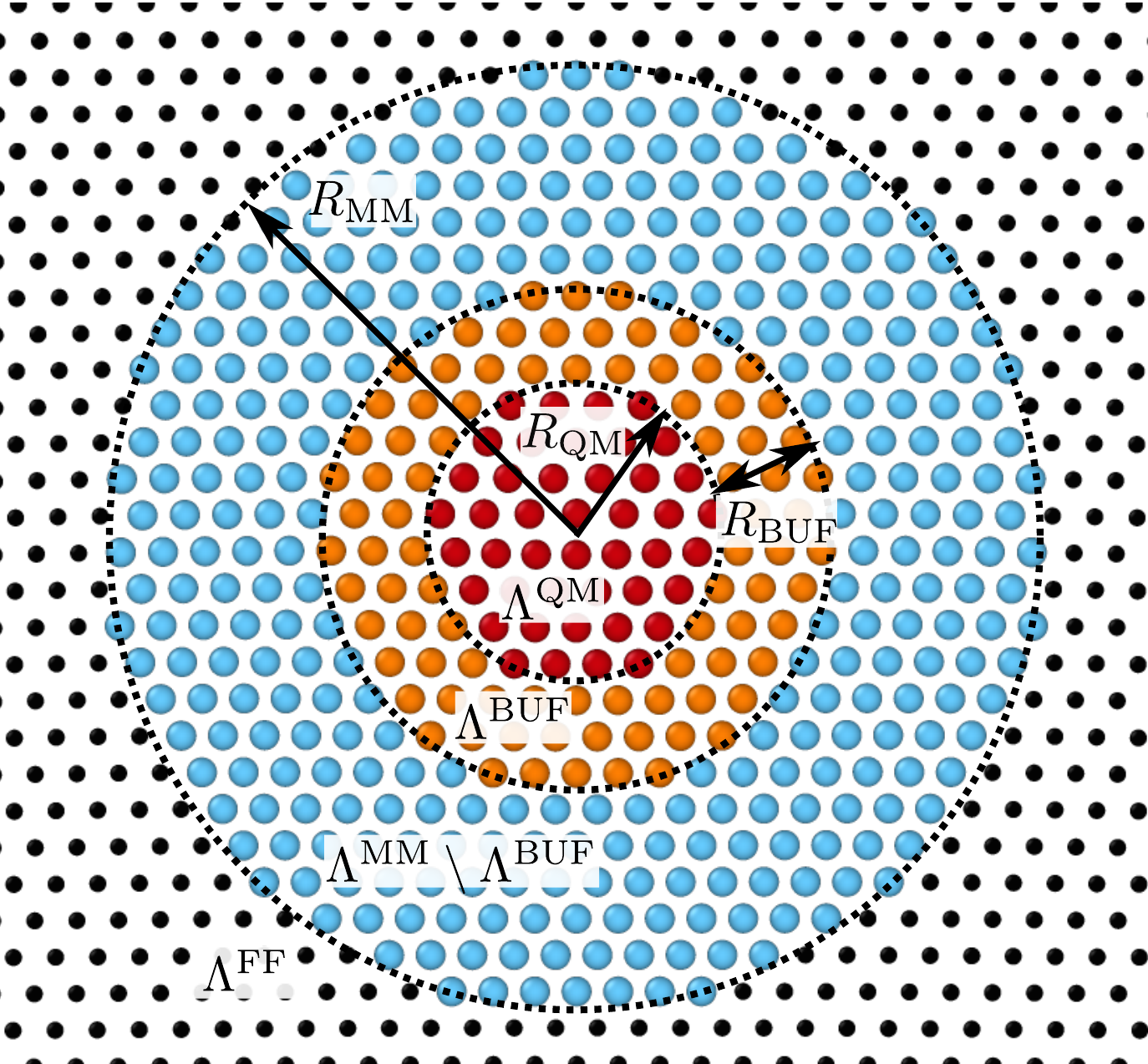}
	\caption{\label{fig-decomposition} Decomposition of a crystal lattice with
		defect (edge dislocation) into QM, MM, buffer and far-field regions,
		according to \S~\ref{sec:decomposition-regions}. }
\end{figure}

\subsubsection{Buffered QM model and site energies}
The site energies in the exact model have infinite range, henve we truncate them
to obtain a computable approximation. To that end, we define
\begin{eqnarray}\label{Vl_buf}
	V_{\ell}^{\rm BUF}\big({\bm g}\big) := 
	\left\{ \begin{array}{ll}
		V_{\ell}^{\LQM\cup\Lbuf}\big({\bm g}\big), & \ell\in\LQM
		\\[1ex]
		V_{\ell}^{B_{\Rbuf}(\ell)}\big({\bm g}\big), & \ell\in\LMM\cup\LFF
	\end{array} \right.,
\end{eqnarray}
where $V_\ell^\Omega(Du(\ell)) := E_\ell^\Omega(Px_0+u)$.

We assume throughout that $R_{\rm QM}>R_{\rm def}+R_{\rm BUF}$. In this case,
Theorem~\ref{theorem-thermodynamic-limit} (ii) (iii) and the assumptions on
$\L$ in {\bf P} and {\bf D} imply that the truncated site potential
\eqref{Vl_buf} is independent of $\ell$ in $\LMM\cup\LFF$.  That is, there
exists $\Vb:(\R^d)^{\mathcal{R}} \rightarrow \R$ such that
\begin{eqnarray}\label{site-hom}
	V_{\ell}^{\rm BUF}\big(Du(\ell)\big) = \Vb\big(D_{\mathcal{R}}u(\ell)\big) 
	\qquad{\rm with}~\mathcal{R} = B_{\Rbuf} \cap (\L \setminus 0),
	\qquad \forall~\ell\in\LMM\cup \LFF.
\end{eqnarray}

\begin{remark}
  We have used the buffer radius parameter $\Rbuf$ for both the buffer
  surrounding the QM region and for the buffer used in the approximate site
  potential $V_\ell^{\rm BUF}$. Although we could choose two separate
  parameters, they affect the error in similar ways, hence for simplicity of
  notation we use only one parameter.
\end{remark}

\subsubsection{QM/MM coupling}
The homogeneity \eqref{site-hom} allows us to construct the MM site potential by
$k$-th order Taylor expansion of $\Vb$ about the far-field lattice state,
\begin{eqnarray}\label{site_MM_k}
	\VMM\big({\bm g}\big) := T_k\Vb\big({\bm g}\big) 
	:= \Vb({\bf 0}) + \sum_{j=1}^k \frac{1}{j!} 
	\delta^j \Vb({\bf 0})\left[{\bm g}^{\otimes j}\right]
	\quad{\rm with}~~k\geq 2.
\end{eqnarray}

With the definitions \eqref{Vl_buf} and \eqref{site_MM_k} we can now specify the
QM/MM energy-mixing scheme
\begin{eqnarray}\label{problem-e-mix}
	\bar{u}^{\rm H} \in \arg\min\big\{ \mathcal{E}^{\rm H}(u) ~\lvert~ u\in \AH \big\},
\end{eqnarray}
with the QM/MM hybrid energy difference functional
\begin{equation}\label{E-H}
  \begin{split}
    \E^{\rm H}(u) 
    &= \sum_{\ell\in \LQM} 
      \Big( V^{\rm BUF}_{\ell}\big(\ee(\ell) + \DD u(\ell)\big) 
      - V^{\rm BUF}_{\ell}\big(\ee(\ell)\big) \Big) \\
    & \qquad + \sum_{\ell\in \LMM\cup\LFF} 
    \Big( \VMM\big(\ee(\ell) + \DD u(\ell)\big) - \VMM\big(\ee(\ell)\big) \Big)
  \end{split}
\end{equation}
and admissible set 
\begin{eqnarray}\label{e-mix-space}
	\Adm_{\frak{m}}^{\rm H} := \Adm_{\frak{m}} \cap \Usx \qquad \text{where} \qquad
	\Usx := \left\{ u \in \Usz  ~\lvert~ u=0~{\rm in}~\Lambda^{\rm FF} \right\} .
\end{eqnarray}

Using same arguments as those in \cite{chenpre_vardef,ehrlacher13}, we have that
$\EH\in C^{\mathfrak{n}-1}\big(\AH\big)$. Note, in particular, that the sum
over $\LMM \cup \LFF$ in the definition of $\E^{\rm H}$ is in fact finite,
since $V^{\rm MM}$ has a finite range of interaction.

\subsection{Error estimates}
\label{sec-e-analysis}
The QM/MM energy mixing scheme \eqref{problem-e-mix} satisfies the following
approximation error estimate. The main steps of the proof are presented below,
but some technical details are given in the appendix.

\begin{theorem}\label{theorem-e-mix}
	Suppose that either assumption {\bf P} or {\bf D} is satisfied and that
	$\bar{u}$ is a strongly stable solution of
	\eqref{eq:appl:variational-problem}.
	
	If, in the definition of $\E^{\rm H}$ in \eqref{E-H}, $\VMM$ is the $k$-th
	order expansion in \eqref{site_MM_k} and $\mathfrak{n}\geq k+2$, then there
	exist constants $C,~\kappa, ~c_{\rm BUF}^{\rm QM},~c_{\rm BUF}^{\rm MM}$ such
	that, for $\RQM$ sufficiently large and for
	\begin{displaymath}
		\Rbuf \geq \max\{c_{\rm BUF}^{\rm QM}\log\RQM,c_{\rm BUF}^{\rm MM}\log\log\RMM\}
		% \leq {\textstyle \frac13} \RQM,
	\end{displaymath}
	there exists a strongly stable solution $\uH$ of \eqref{problem-e-mix}
	satisfying 
	\begin{align}
		\label{error-u-e-mix}
		\|D\bar{u}-D\uH\|_{\ell^2_\gamma} &\leq 
		C\left( \RQM^{-\alpha} + \RMM^{-m/2} \log^t\RMM + e^{-\kappa \Rbuf} \right),  
                                                    \quad \text{and} 
		\\[1ex]
		\label{error-e-e-mix}
		|\E(\bar{u})-\EH(\uH)| &\leq 	
		C\left( R_{\rm QM}^{-\alpha-m/2} \log^t\RQM + R_{\rm MM}^{-m} \log^{2t}\RMM 
		+ e^{-\kappa \Rbuf} \right), \\[1ex]
		\nonumber
		\text{where} \qquad & \begin{cases}
			\alpha = (2k-1)m/2, & \text{if {\bf P}}, \\
			\alpha = k-1, & \text{if {\bf D}}.
		\end{cases}
	\end{align}
\end{theorem}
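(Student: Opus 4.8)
The plan is to set up a consistency–stability framework in the spirit of a priori error analysis for Galerkin-type approximations, treating $\EH$ as a perturbation of $\E$ on the finite-dimensional subspace $\AH$. Concretely, I would (i) show that the QM/MM energy functional $\EH$ is a good approximation of $\E$ in the sense that $\|\delta\E(u) - \delta\EH(u)\|_{(\Usx)^*}$ is small when evaluated near $\bar u$ (consistency), (ii) transfer the strong stability \eqref{eq:appl:strong-stab} of $\bar u$ to strong stability of $\delta^2 \EH$ on $\AH$ near $\bar u$ (stability), and (iii) invoke a quantitative inverse function theorem / Lipschitz-stability argument to produce $\uH$ with the claimed distance to $\bar u$. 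The error in the energy \eqref{error-e-e-mix} then follows from a Taylor expansion of $\E - \EH$ around the respective minimisers, exploiting that $\delta\E(\bar u) = 0$ and $\delta\EH(\uH) = 0$, which formally squares the residual and doubles the rate.

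\textbf{Consistency.} The residual $\<\delta\E(\bar u) - \delta\EH(\bar u), v\>$ splits into three contributions, matching the three terms in \eqref{error-u-e-mix}. First, the truncation of the QM site energies from infinite range to $\LQM\cup\Lbuf$ (resp.\ $B_{\Rbuf}(\ell)$): by the exponential locality in Theorem \ref{theorem-thermodynamic-limit}(i), replacing $V_\ell$ by $V_\ell^{\rm BUF}$ incurs an error $\lesssim e^{-\eta \Rbuf}$ per site, and summing with the decay of $\DD\bar u$ from Theorem \ref{th:appl:regularity} gives the $e^{-\kappa\Rbuf}$ term. Second, the Taylor-expansion error: on $\LMM$, $\VMM = T_k\Vb$ differs from $\Vb$ by a term of order $|\ee(\ell)+\DD\bar u(\ell)|^{k+1}$; here the key input is that on the MM region $|\ell| \gtrsim \RQM$, so by Theorem \ref{th:appl:regularity} (point defects) or Lemma \ref{lemma-decay-el} together with Theorem \ref{th:appl:regularity} (dislocations) the strain is of size $\lesssim |\ell|^{-m}\log^t$ (resp.\ $|\ell|^{-1}\log^t$ from the slip predictor $\ee$), and raising this to the $(k+1)$-st power and summing over $\LMM$ produces exactly the rate $\RQM^{-\alpha}$ with $\alpha = (2k-1)m/2$ (resp.\ $\alpha = k-1$)—the two exponents in the theorem come from whether the dominant small quantity is $\DD\bar u \sim |\ell|^{-m}$ or $\ee \sim |\ell|^{-1}$. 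Third, the far-field truncation $u = 0$ on $\LFF$ (i.e.\ the restriction to $\Usx$): this is handled by a standard truncation/cutoff estimate, giving $\RMM^{-m/2}\log^t\RMM$ from the tail $\sum_{|\ell|>\RMM}|\DD\bar u(\ell)|^2$.

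\textbf{Stability and conclusion.} For stability I would compare $\<\delta^2\EH(\bar u)v, v\>$ with $\<\delta^2\E(\bar u)v,v\>$ for $v\in\Usx$: the difference is controlled by the same three mechanisms (truncation, Taylor-expansion of the Hessian, far-field cutoff — the last being exact on $\Usx$), each small for $\RQM, \RMM$ large and $\Rbuf \gtrsim \log\RQM$. Hence $\delta^2\EH(\bar u)$ inherits the coercivity constant $\bar c/2$ on $\Usx$, and by continuity (Theorem \ref{theorem-thermodynamic-limit}(i) gives $\EH\in C^{\mathfrak n-1}$) this persists in a neighbourhood of $\bar u$. Combining consistency and stability via the quantitative implicit function theorem yields a strongly stable $\uH$ with $\|D\bar u - D\uH\|_{\ell^2_\gamma}$ bounded by the residual, i.e.\ \eqref{error-u-e-mix}. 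For \eqref{error-e-e-mix}, write $\E(\bar u) - \EH(\uH) = [\E(\bar u) - \E(\uH)] + [\E(\uH) - \EH(\uH)]$; the first bracket is $O(\|D\bar u - D\uH\|^2)$ since $\delta\E(\bar u)=0$, and the second is again a residual-type quantity but now one can integrate the first-variation estimate against $\uH$ (which decays), gaining an extra half-power and the factor $\RQM^{-m/2}\log^t$, giving the squared/improved rates. \textbf{The main obstacle} I anticipate is the stability transfer in the dislocation case {\bf D}: because the predictor strain $\ee$ only decays like $|\ell|^{-1}$, one must be careful that the Taylor remainder in the Hessian, summed over $\LMM$, is genuinely small — this requires $k\geq 2$ (so that the discarded terms are at least cubic) and a careful bookkeeping of the logarithmic factors and of how the buffer radius $\Rbuf \gtrsim \log\RQM$ enters; getting the precise thresholds $c_{\rm BUF}^{\rm QM}, c_{\rm BUF}^{\rm MM}$ right is the delicate part, and the $\log\log\RMM$ scaling for the second buffer threshold suggests a subtle interplay between the far-field truncation and buffer truncation errors that needs to be tracked quantitatively.
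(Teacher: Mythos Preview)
Your proposal is essentially the paper's own proof: consistency (buffer truncation + Taylor remainder + far-field cutoff), stability transfer from $\delta^2\E(\bar u)$ to $\delta^2\EH$, then the quantitative inverse function theorem, followed by a squaring argument for the energy error. Two small points where the paper is more precise than your sketch: (i) since $\bar u\notin\AH$, the paper first constructs an explicit truncation $T^{\rm H}\bar u\in\AH$ via a smooth cutoff at radius $\sim\RMM$ and applies the inverse function theorem \emph{there}; your ``far-field truncation'' remark is the right idea but you should make this projection explicit before invoking stability/consistency at a point of $\AH$. (ii) For the Taylor contribution to the \emph{first variation}, the remainder $\delta\Vb-\delta T_k\Vb$ is $O(|\ee+\DD u|^{k})$, not $|{\cdot}|^{k+1}$; after Cauchy--Schwarz against $\DD v$ this becomes $\|\ee+\DD u\|_{\ell^{2k}_\gamma(\LMM)}^{k}$, which is what yields $\alpha=(2k-1)m/2$ in case~{\bf P} and $\alpha=k-1$ in case~{\bf D} --- your phrase ``raising to the $(k{+}1)$-st power and summing'' would give a different (and incorrect) exponent.
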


\begin{proof}
	{\it 1. Quasi-best approximation: } 
	Following \cite[Lemma 7.3]{ehrlacher13}, we can construct
	$T^{\rm H}\bar{u}\in\AH$ by
	% For $R > 0$ we define $T_R : (\R^d)^\L \to \Usz(\L)$ by
	\begin{equation}\label{eq:atm:TR}
		T^{\rm H} \bar{u}(\ell) := \eta\big( \ell / \RMM \big) 
		\big(\bar{u}(\ell) - \bar{u}_{\infty} - a_{\RMM} \big)  ,
	\end{equation}
	where $\bar{u}_{\infty}$ is given in Theorem \ref{th:appl:regularity},
	$a_{\RMM} := \mint_{B_{5\RMM/6} \backslash B_{4\RMM/6}}	
	\big(I\bar{u}(x) - \bar{u}_{\infty} \big) \dd x$
	with $I\bar{u}$ defined in \S \ref{sec-interpolation},
	and $\eta \in C^1(\R^m)$ is a cut-off function satisfying
	$\eta(x) = 1$ for $|x| \leq 4/6$ and $\eta(x) = 0$ for $|x| \geq 5/6$.
	Then, for any $\gamma > 0$ and for $\RMM$ sufficiently large,
	we have from the decay estimates in Theorem \ref{th:appl:regularity} that
	\begin{align}
		\label{proof-4-1-1}
		&\|DT^{\rm H}\bar{u}-D\bar{u}\|_{\ell^2_\gamma} 
		\leq C \|D\bar{u}\|_{\ell^2_\gamma(\Lambda\backslash B_{\RMM/2})}
		\qquad{\rm and} 
		\\[1ex] \label{D-DD}
		& 
		|\DD T^{\rm H}\bar{u}(\ell)|_{\gamma} \leq C  
		(1+|\ell|)^{-m} \log^t (2+|\ell|)  \quad \forall~\ell\in\L.
	\end{align}

	Let $r>0$ be such that $B_r(\bar{u}) \subset \Adm_{\frak{m}}$ for some
	$\frak{m} > 0$. We have from Theorem \ref{th:appl:regularity} that, for $\RMM$
	sufficiently large, $T^{\rm H}\bar{u} \in B_{r/2}(\bar{u})$ and hence
	$B_{r/2}(T^{\rm H}\bar{u}) \subset \Adm_{\frak{m}}$. Since $\E\in C^3(\Adm_0)$,
	$\delta\E$ and $\delta^2\E$ are Lipschitz continuous in $B_r(\bar{u})$ with
	uniform Lipschitz constants $L_1$ and $L_2$, i.e.,
	\begin{align}
		\label{proof-4-1-2}
		\|\delta\E(\bar{u})-\delta\E(T^{\rm H}\bar{u})\| 
		&\leq L_1\|D\bar{u}-DT^{\rm H}\bar{u}\|_{\ell^2_\gamma}
		\leq CL_1\|D\bar{u}\|_{\ell^2_\gamma(\L\backslash B_{\RMM/2})}
		\qquad \text{and} \\
		\label{proof-4-1-3}
		\|\delta^2\E(\bar{u})-\delta^2\E(T^{\rm H}\bar{u})\| 
		&\leq L_2\|D\bar{u}-DT^{\rm H}\bar{u}\|_{\ell^2_\gamma}
		\leq CL_2\|D\bar{u}\|_{\ell^2_\gamma(\L\backslash B_{\RMM/2})}.
	\end{align}
	
	{\it 2. Stability: } Since $\bar{u}$ is strongly stable, there exists
	$\bar{c} > 0$ such that
	$\< \ddel \E^{\rm H}(\bar{u}) v, v \> \geq \bar{c} \|Dv \|_{\ell^2_\gamma}^2$.
	For any $v\in\Usx$, we have
	\begin{eqnarray}\label{proof-s-T}
		\nonumber
		&& \big\< \delta^2\EH(T^{\rm H}\bar{u}) v, v \big\> 
		-\big\< \delta^2\E(\bar{u}) v, v \big\>  
		\\[1ex]
		\nonumber
		&=& \big\< \big(\delta^2\EH(T^{\rm H}\bar{u}) 
		- \delta^2\E(T^{\rm H}\bar{u})\big) v, v \big\> 
		+ \big\< \big(\delta^2\E(T^{\rm H}\bar{u}) 
		- \delta^2\E(\bar{u})\big) v, v \big\> 
		\\[1ex]
		\nonumber
		&= & \sum_{\ell\in\L} 
		\left\< \Big(\delta^2 V_{\ell}^{\rm BUF}\big(\ee(\ell)+\DD T^{\rm H}\bar{u}(\ell)\big) 
		- \delta^2 V_{\ell}\big(\ee(\ell)+\DD T^{\rm H}\bar{u}(\ell)\big) \Big) 
		\DD v(\ell) , \DD v(\ell) \right\> 
		\\[1ex]
		\nonumber
		&& + \sum_{\ell\in\LMM\cup\LFF} 
		\left\< \Big( \delta^2 \VMM\big(\ee(\ell)+\DD T^{\rm H}\bar{u}(\ell)\big) 
		- \delta^2 V_{\ell}^{\rm BUF}\big(\ee(\ell)+\DD T^{\rm H}\bar{u}(\ell)\big) \Big) 
		\DD v(\ell) , \DD v(\ell) \right\> 
		\\[1ex]
		\nonumber
		&& + \big\< \big(\delta^2\E(T^{\rm H}\bar{u}) 
		- \delta^2\E(\bar{u}) \big)v, v \big\> 
		\\[1ex]
		&=:&  Q_1 + Q_2 + Q_3 .
	\end{eqnarray}
	Using the estimate \eqref{Vl-buf-err}, we have
	\begin{eqnarray}\label{proof-s-T1}
		|Q_1|\leq C\sum_{\ell\in\L} e^{-\eta\Rbuf} |\DD v(\ell)|_{\gamma}^2
		\leq Ce^{-\eta\Rbuf} \|Dv\|^2_{\ell^2_{\gamma}} .
	\end{eqnarray}
	Taylor's expansion \eqref{site_MM_k} yields
	\begin{multline}\label{proof-s-T2}
		|Q_2| = \sum_{\ell\in\LMM\cup\LFF} 
		\left\< \Big( \delta^2 T_k\Vb\big(\ee(\ell)+\DD T^{\rm H}\bar{u}(\ell)\big) 
		- \delta^2 V_{\#}^{\rm BUF}\big(\ee(\ell)+\DD T^{\rm H}\bar{u}(\ell)\big) \big) 
		\DD v(\ell) , \DD v(\ell) \right\>  
		\\[1ex]
		\leq C\sum_{\ell\in\LMM\cup\LFF}
		|\ee(\ell)+\DD T^{\rm H}\bar{u}(\ell)|_{\gamma}^{k-1} |\DD v(\ell)|^2_{\gamma}
		~\leq~ C\|\ee+\DD T^{\rm H}\bar{u}\|^{k-1}_{\ell^{\infty}_{\gamma}(\LMM\cup\LFF)}
		\|Dv\|^2_{\ell^2_{\gamma}}.
	\end{multline}
	
	The Lipschitz continuity \eqref{proof-4-1-3} implies
	\begin{eqnarray}\label{proof-s-T3}
		|Q_3| \leq CL_2 
		\|D\bar{u}\|_{\ell^2_\gamma(\L\backslash B_{\RMM/2})} 
		\|Dv\|^2_{\ell^2_{\gamma}} .
	\end{eqnarray}
	Using \eqref{D-DD}, \eqref{proof-s-T}, \eqref{proof-s-T1}, \eqref{proof-s-T2},
	\eqref{proof-s-T3}, the decay estimates in Lemma \ref{lemma-decay-el},
	Theorem \ref{th:appl:regularity}, and
	the fact that $\bar{u}$ is a strongly stable solution, we have that for
	sufficiently large $\RQM$ and $\Rbuf$ (note that $\RMM \geq \RQM$),
	\begin{eqnarray}\label{e-mix-stability}
		\big\< \delta^2\EH(T^{\rm H}\bar{u}) v, v \big\>  \geq \frac{\bar{c}}{2} 
		\| Dv \|_{\ell^2_\gamma}^2.
	\end{eqnarray}

	{\it 3. Consistency: } We estimate the consistency error, for any $v\in\Usx$, by
	\begin{eqnarray}\label{proof-c-T}
		\nonumber
		&& \hspace{-1.5cm} \big\< \delta\EH(T^{\rm H}\bar{u}) , v \big\> 
		\\[1ex]
		\nonumber
		&=& \big\< \delta\EH(T^{\rm H}\bar{u}) - \delta\E(T^{\rm H}\bar{u}) , v \big\> 
		+ \big\< \delta\E(T^{\rm H}\bar{u}) - \delta\E(\bar{u}) , v \big\> 
		\\[1ex]
		\nonumber
		&= & \sum_{\ell\in\L} 
		\big\< \delta V_{\ell}^{\rm BUF}\big(\ee(\ell)+\DD T^{\rm H}\bar{u}(\ell)\big) 
		- \delta V_{\ell}\big(\ee(\ell)+\DD T^{\rm H}\bar{u}(\ell)\big) , \DD v(\ell) \big\> 
		\\[1ex]
		\nonumber
		&& + \sum_{\ell\in\LMM\cup\LFF} 
		\big\< \delta \VMM\big(\ee(\ell)+\DD T^{\rm H}\bar{u}(\ell)\big) 
		- \delta V_{\ell}^{\rm BUF}\big(\ee(\ell)+\DD T^{\rm H}\bar{u}(\ell)\big) , 
		\DD v(\ell) \big\> 
		\\[1ex]
		\nonumber
		&& + \big\< \delta\E(T^{\rm H}\bar{u}) - \delta\E(\bar{u}) , v \big\> 
		\\[1ex]
		&:=&  T_1 + T_2 + T_3 .
	\end{eqnarray}
	The term $T_1$ can be estimated by
	\begin{eqnarray}\label{proof-c-T1}
		|T_1| \leq C e^{-\kappa\Rbuf}\|Dv\|_{\ell^2_{\gamma}},
	\end{eqnarray}
	with some constant $\kappa$; a detailed proof of this assertion is presented in
	\ref{sec-proof-c-T1}.
	
	To estimate $T_2$, we have from \eqref{site_MM_k} that
	\begin{eqnarray}\label{proof-c-T2}
		\nonumber
		|T_2| &=& \sum_{\ell\in\LMM\cup\LFF} 
		\big\< \delta T_k\Vb\big(\ee(\ell)+\DD T^{\rm H}\bar{u}(\ell)\big) 
		- \delta V_{\#}^{\rm BUF}\big(\ee(\ell)+\DD T^{\rm H}\bar{u}(\ell)\big) , 
		\DD v(\ell) \big\>  
		\\[1ex] \nonumber
		&\leq& C\sum_{\ell\in\LMM\cup\LFF}
		|\ee(\ell)+\DD T^{\rm H}\bar{u}(\ell)|_{\gamma}^{k} |\DD v(\ell)|_{\gamma}
		\\[1ex]
		&\leq& C\|\ee+\DD T^{\rm H}\bar{u}\|^{k}_{\ell^{2k}_{\gamma}(\LMM\cup\LFF)}
		\|Dv\|_{\ell^2_{\gamma}}
	\end{eqnarray}	
	
	Further, using \eqref{proof-4-1-2} we can estimate $T_3$ by
	\begin{eqnarray}\label{proof-c-T3}
		|T_3| \leq CL_1 
		\|D\bar{u}\|_{\ell^2_\gamma(\L\backslash B_{\RMM/2})} 
		\|Dv\|_{\ell^2_{\gamma}} .
	\end{eqnarray}
	Taking into accounts \eqref{proof-c-T}, \eqref{proof-c-T1}, 
	\eqref{proof-c-T2} and \eqref{proof-c-T3}, we have
	\begin{align}\label{e-mix-consistency}
		\big\< \delta\EH(T^{\rm H}\bar{u}) , v \big\> \leq C
		\left( e^{-\kappa\Rbuf} + \|\ee+\DD T^{\rm H} 
		\bar{u}\|^{k}_{\ell^{2k}_{\gamma}(\LMM\cup\LFF)}
		+ \|D\bar{u}\|_{\ell^{2}_\gamma(\L\backslash B_{\RMM/2})} 
		\right) \|Dv\|_{\ell^2_{\gamma}} .
	\end{align}
	
	If {\bf P} is satisfied, then we can obtain the estimates for 
	point defects by substituting $\ee(\ell)={\bf 0}$ and
	$|\DD \bar{u}(\ell)|_\gamma \leq C (1+|\ell|)^{-m}$
	into \eqref{e-mix-consistency}:
	\begin{eqnarray}\label{e-mix-consistency-P}
		\left| \big\< \delta\EH(T^{\rm H}\bar{u}) , v \big\> \right| \leq C
		\left( \RQM^{-(2k-1)m/2} + \RMM^{-m/2} + e^{-\kappa \Rbuf} \right)
		\|Dv\|_{\ell^2_{\gamma}} .
	\end{eqnarray}
	If {\bf D} is satisfied, then we can obtain the estimates for
	dislocations by substituting $\ee(\ell)=e(\ell)$,
	$|e(\ell)|_{\gamma} \leq C|\ell|^{-1}$, and
	$|\DD \bar{u}(\ell)|_\gamma \leq C (1+|\ell|)^{-2}\log(2+|\ell|) $
	into \eqref{e-mix-consistency}:
	\begin{eqnarray}\label{e-mix-consistency-D}
		\left| \big\< \delta\EH(T^{\rm H}\bar{u}) , v \big\> \right|  \leq C
		\left( \RQM^{-k+1} + \RMM^{-1}\log\RMM + e^{-\kappa \Rbuf} \right)
		\|Dv\|_{\ell^2_{\gamma}} .
	\end{eqnarray}

	{\it 4. Application of inverse function theorem: } With the stability
	\eqref{e-mix-stability} and consistency \eqref{e-mix-consistency}, we can apply
	the inverse function theorem \cite[Lemma 2.2]{ortner11} to obtain, for
	$\RQM, \Rbuf$ sufficiently large, the existence of a solution $\uH$ to
	\eqref{problem-e-mix}, and the estimate
	\begin{eqnarray}\label{err-e-mix-u}
		\|D\uH-DT^{\rm  H}\bar{u}\|_{\ell^2_{\gamma}} \leq C \left\{
		\begin{array}{ll}
			\RQM^{-(2k-1)m/2} + \RMM^{-m/2} + e^{-\kappa \Rbuf} &  {\rm if~\bf P}
			\\[1ex]
			\RQM^{-k+1} + \RMM^{-1}\log\RMM + e^{-\kappa \Rbuf} & {\rm if~\bf D}
		\end{array}   \right. ,
	\end{eqnarray}
	which together with \eqref{proof-4-1-1} completes the proof of \eqref{error-u-e-mix}.
	The error estimate, together with the stability estimate \eqref{e-mix-stability}
	in particular imply that, for $\RQM, \Rbuf$ sufficiently large, $\uH$ is
	strongly stable.
	
	{\it 5. Error in the energy: } Next, we estimate the error in the energy
	difference functional.  From $\E\in C^2(\Adm_0)$ we have that
	\begin{multline}\label{proof-e-1}
		\big| \E(T^{\rm H}\bar{u}) - \E(\bar{u}) \big| 
		= \Big| \int_0^1 \big\< \delta\E \big( 	(1-s)
		\bar{u} + s T^{\rm H}\bar{u}\big), T^{\rm H}\bar{u}
		- \bar{u} \big\> \dd s \Big| \\
		= \Big| \int_0^1 \big\< \del\E \big( (1-s) \bar{u} 
		+ s T^{\rm H}\bar{u}\big) -
		\del\E(\bar{u}), T^{\rm H}\bar{u} - \bar{u} \big\> \dd s \Big| 
		\leq C \| DT^{\rm H}\bar{u} - D\bar{u} \|_{\ell^2_\gamma}^2  ~~
	\end{multline}
	and from  $\EH\in C^2(\AH)$ that
	\begin{eqnarray}\label{proof-e-1-}
		\big| \EH(\uH) - \EH(T^{\rm H}\bar{u}) \big| 
		\leq C \| D\uH - DT^{\rm H}\bar{u} \|_{\ell^2_\gamma}^2 .
	\end{eqnarray}
	
	Denoting $g(\ell)=\DD T^{\rm H}\bar{u}(\ell)$ and suppressing the argument
	$(\ell)$ in $g(\ell)$ and $\ee(\ell)$, we have
	\begin{eqnarray}\label{proof-e-2}
		\nonumber
		&& \hspace{-2cm} |\E(T^{\rm H}\bar{u})-\EH(T^{\rm H}\bar{u})| 
		\\[1ex] \nonumber
		&=& \sum_{\ell\in \Lambda} 
		\left( V_{\ell}(g+\ee) - V_{\ell}(\ee)
		- V^{\rm BUF}_{\ell}(g+\ee) + V^{\rm BUF}_{\ell}(\ee) \right) 
		\\ \nonumber
		&& +\sum_{\ell\in \LMM\cup\LFF} 
		\left( \Vb(g+\ee) - \Vb(\ee)
		- T_k\Vb(g+\ee) + T_k\Vb(\ee) \right) 
		\\[1ex]
		&:=& S_1+S_2 ,
	\end{eqnarray}
	where $S_1$ is estimated in \ref{sec-proof-c-T1} by
	\begin{eqnarray}\label{proof-e-3}
		|S_1| \leq C e^{-\kappa\Rbuf} ,
	\end{eqnarray}
	and $S_2$ is estimated by
	\begin{align}\label{proof-e-4}
		\nonumber
		|S_2| &\leq 
		\sum_{\ell\in \LMM\cup\LFF} \left(
		\frac{1}{(k+1)!} \big( \delta^{k+1}\Vb(0)(g+\ee)^{\otimes k+1}  
		- \delta^{k+1}\Vb(0)\ee^{\otimes k+1} \big) \right.
		\\[1ex] \nonumber
		& \qquad  \left. + C\big( |g+\ee|_{\gamma}^{k+2} 
		+ |\ee|_{\gamma}^{k+2} \big) \right)
		\\[1ex] \nonumber
		&\leq C\sum_{\ell\in \LMM\cup\LFF} \left(
		|g|_{\gamma} (|g+\ee|_{\gamma}^k + |\ee|_{\gamma}^k) 
		+ |g+\ee|_{\gamma}^{k+2} + |\ee|_{\gamma}^{k+2} \right)
		\\[1ex] \nonumber
		&\leq C \left( \| |\DD T^{\rm H}\bar{u}(\ell)|_{\gamma} \cdot 
		|\ee(\ell)+\DD T^{\rm H}\bar{u}(\ell)|_{\gamma}^k 
		\|_{\ell^1_{\gamma}(\LMM\cup\LFF)} 
		+  \||\ee(\ell)+\DD T^{\rm H}\bar{u}(\ell)|_{\gamma}^{k+2} 
		\|_{\ell^1_{\gamma}(\LMM\cup\LFF)}  
		\right)
		\\[1ex]
		&\leq C \left\{ \begin{array}{ll}
			\RQM^{-km}  &  {\rm if~\bf P}
			\\[1ex]
			\RQM^{-k}\log\RQM  & {\rm if~\bf D}
		\end{array}   \right. .
	\end{align}
	
	Taking \eqref{err-e-mix-u}, \eqref{proof-e-1}, \eqref{proof-e-1-}, 
	\eqref{proof-e-2}, \eqref{proof-e-3} and
	\eqref{proof-e-4} into accounts, we have
	\begin{eqnarray}\label{err-e-mix-e}
		\nonumber
		|\EH(\uH) - \E(\bar{u})| &\leq& 
		|\EH(\uH) - \EH(T^{\rm H}\bar{u})| 
		+ |\EH(T^{\rm H}\bar{u}) - \E(T^{\rm H}\bar{u})| 
		+ |\E(T^{\rm H}\bar{u}) - \E(\bar{u})|
		\\[1ex]
		&\leq& C
		%\left( \| D\uH - D\bar{u} \|_{\ell^2_\gamma}^2+ \| |\DD \uH(\ell)|\cdot 
		% |\ee(\ell)+\DD \uH(\ell)|^k \|_{\ell^1_{\gamma}(\LMM\cup\LFF)} + e^{-\kappa\Rbuf} \right)
		\left\{ \begin{array}{ll}
			\RQM^{-km} + \RMM^{-m} + e^{-\kappa \Rbuf} &  {\rm if~\bf P}
			\\[1ex]
			\RQM^{-k}\log\RQM + \RMM^{-2}\log^2\RMM + e^{-\kappa \Rbuf} & {\rm if~\bf D}
		\end{array}   \right. ,
	\end{eqnarray}
	which completes the proof of \eqref{error-e-e-mix}.
\end{proof}

\section{Force-mixing}
\label{sec-f-mix-idea}
\setcounter{equation}{0}

\def\Fb{\F^{\rm BUF}_{\#}}
\def\FMM{\F^{\rm MM}_{\#}}
\def\FH{\F^{\rm H}}
\def\Fa{\widetilde{\F}}
\def\Fab{\widetilde{\F}^{\rm BUF}_{\#}}
\def\Ea{\widetilde{\E}}
\def\LI{\L^{\rm I}}

\subsection{Formulation of QM/MM force mixing}
\label{sec-formulation-f-mix}
To construct a force-based QM/MM coupling scheme, we follow the idea in
\S~\ref{sec-f-idealized}. In the MM region we construct an approximation to the
tight binding force $F_{\ell}(y)\approx F_{\ell}^{\rm MM}(y)$ by Taylor's
expansion in order to ensure a good match between the QM and MM models.

Our starting point, instead of the energy minimisation formulation
\eqref{eq:appl:variational-problem}, is the force-equilibrium formulation:
\begin{eqnarray}\label{eq:problem-force}
	{\rm Find}~ \bar{u} \in \Adm_0, ~~ {\rm s.t.} \quad
	F_{\ell}(y_0 + \bar{u}) = 0 \qquad \forall~\ell\in\Lambda.
\end{eqnarray}
where
\begin{eqnarray}\label{eq:force-Du}
	F_{\ell}(y_0+u) = 
	\sum_{\rho\in\ell-\L}
	V_{\ell-\rho,\rho}\big(Du_0(\ell-\rho) + Du(\ell-\rho)\big)
	- \sum_{\rho\in\L-\ell}  V_{\ell,\rho}\big(Du_0(\ell) + Du(\ell)\big).
\end{eqnarray}
We have from \eqref{energy-difference} and Theorem
\ref{theorem-thermodynamic-limit-force} (iv) that $F(y_0+u) = \nabla \E(u)$,
hence any solution of \eqref{eq:appl:variational-problem} also solves
\eqref{eq:force-Du}.

To simplify the notation in the construction of the QM/MM scheme we define
\begin{displaymath}
	\F_\ell^\Omega(u) := F_\ell^\Omega(y_0+u) \qquad \text{and} \qquad
	\Fa_\ell^\Omega(w) := F_\ell^\Omega(Px_0 + w),
\end{displaymath}
and we remark that 
\begin{equation}\label{eq-force-Du-Omega}
	\Fa_\ell^\Omega(u_0+u) = \F_\ell^\Omega(u) = 
	\sum_{\rho\in \ell-\Omega} V^{\Omega}_{\ell-\rho,\rho}\big(Du_0(\ell-\rho)
	+ Du(\ell-\rho)\big) - \sum_{\rho\in\Omega-\ell} 
	V^{\Omega}_{\ell,\rho}\big(Du_0(\ell) + Du(\ell)\big) .
\end{equation}

We decompose the reference configuration into $\LQM$, $\LMM$, $\LFF$, $\Lbuf$,
in the same way as in \S~\ref{sec-e-mix-idea}. To obtain computable forces we
then truncate the force of the infinite lattice,
\begin{eqnarray}\label{Fl_buf}
	\F_{\ell}^{\rm BUF}(u) := 
	\left\{ \begin{array}{ll}
		\F_{\ell}^{\LQM\cup\Lbuf}(u), & \ell\in\LQM,
		\\[1ex]
		\F_{\ell}^{B_{\Rbuf}(\ell)}(u), & \ell\in\LMM\cup\LFF.
	\end{array} \right. 
\end{eqnarray}
If $R_{\rm QM}>\Rcore+R_{\rm BUF}$, then Theorem
\ref{theorem-thermodynamic-limit-force} (ii) (iii) and the assumptions on
$\Lambda$ in {\bf P} and {\bf D} imply that the truncated force operator
$\Fa_{\ell}^{B_{\Rbuf}(\ell)}$ is independent of $\ell$ in $\LMM\cup\LFF$.  That
is, there exists $\Fb:(\R^m)^{\mathcal{R}} \rightarrow \R$, where
  $\mathcal{R} = (A \Z^m) \cap B_{\Rbuf}$ such that
\begin{eqnarray}\label{force-hom}
	\Fa_{\ell}^{B_{\Rbuf}(\ell)}(v) = 
	\Fb\left(v(\cdot-\ell)|_{B_{\Rbuf}}\right)
	\qquad \forall~\ell\in\LMM\cup \LFF.
\end{eqnarray}
%
% In what follows, we will suppress the argument $(\ell)$
% and write $\Fb(u)$ for simplicity of notations.
We now define the MM force to be the $k$-th order Taylor expansion of $\Fb$,
\begin{eqnarray}\label{force_MM_k}
	% &\hspace{-0.5cm} 
	\FMM(w) ~:=~ T_k\Fb(w)
	~:=~ \Fb(0) + \sum_{j=1}^k \frac{1}{j!} 
	\delta^j \Fb(0)\left[w^{\otimes j}\right]
	\qquad{\rm with}~~k\geq 1 .
\end{eqnarray}
% with $k\geq 1$. 
We remark that the zeroth-order term in the expansion
vanishes since the reference lattice is an equilibrium.

We have the following force-mixing QM/MM coupling model:
\begin{eqnarray}\label{problem-f-mix}
	{\rm Find}~\uH\in \AH, ~~ {\rm s.t.} \quad
	\F_{\ell}^{\rm H}\big(\bar{u}^{\rm H}\big) = 0
	\qquad \forall~\ell\in\LQM\cup\LMM
\end{eqnarray}
with the hybrid force
\begin{eqnarray}\label{F-H}
	\F_{\ell}^{\rm H}(u) = 
	\left\{ \begin{array}{ll}
		\F^{\rm BUF}_{\ell}(u) & \ell\in\LQM
		\\[1ex]
		\FMM\left( \big(u_0(\cdot-\ell)+u(\cdot-\ell)\big)
		|_{B_{\Rbuf}(\ell)} \right) & \ell\in\LMM  %\cup\LFF
	\end{array} \right. .
\end{eqnarray}
We emphasize that $\FH$ is not a gradient of any energy functional.  For
$v:\L\rightarrow\R$, we will use the notation
$\big\<\FH(u),v\big\>:=\sum_{\ell\in\LQM\cup\LMM}\FH_{\ell}(u)\cdot v(\ell)$ in
our analysis.

\subsection{Error estimates}
\label{sec-f-analysis}

\begin{theorem}\label{theorem-f-mix}
	Suppose that either assumption {\bf P} or {\bf D} is satisfied and that
	$\bar{u}$ is a strongly stable solution of \eqref{eq:problem-force}.
	
	Suppose that, in the definition of $\FH$ in \eqref{F-H}, $\FMM$ is the $k$-th
	order expansion in \eqref{force_MM_k} and $\mathfrak{n}\geq k+3$. Then, for
	any given MM region growth constant $C_{\rm QM}^{\rm MM} > 0$, there exist
	constants $C,~\kappa, ~c_{\rm BUF}^{\rm QM},~c_{\rm BUF}^{\rm MM}$ such that,
	if $\RQM$ is sufficiently large, while $\RQM, \Rbuf, \RMM$ maintain the bounds
	\begin{displaymath}
		\log \frac{\RMM}{\RQM} \leq C_{\rm QM}^{\rm MM} \qquad
		\text{and} \qquad
		\Rbuf \geq \max\{c_{\rm BUF}^{\rm QM}\log\RMM, c_{\rm BUF}^{\rm MM}\log\log\RMM\},
	\end{displaymath}
	there exists a strongly stable solution $\uH$ of \eqref{problem-f-mix}
	satisfying
	\begin{align}\label{error-u-f-mix}
		\|D\bar{u}-D\uH\|_{\ell^2_\gamma}
		&\leq 
		C\left( \RQM^{-\alpha} \log\RMM + \RMM^{-m/2}\log^t\RMM
		+ e^{-\kappa \Rbuf} \right),   
		\\ \nonumber
		\text{where} 
		\qquad
		& 
		\begin{cases}
			\alpha=(2k+1)m/2, & \text{if {\bf P}}, \\
			\alpha=k, & \text{if {\bf D}}.
		\end{cases}
	\end{align}
\end{theorem}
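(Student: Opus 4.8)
The plan is to mirror the structure of the energy-mixing proof (Theorem~\ref{theorem-e-mix}), replacing the variational argument with a direct application of the inverse function theorem to the nonlinear map $u \mapsto \FH(u)$ on $\AH$, treated as a map into $(\Usx)^*$ via the pairing $\big\<\FH(u),v\big\>$. As in that proof, the three ingredients are a quasi-best approximant, a stability (uniform inf-sup) estimate at that approximant, and a consistency (residual) estimate; the new difficulty is that $\FH$ is non-conservative, so $\delta\FH$ is not symmetric and one must work with an inf-sup condition rather than coercivity.

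First I would construct the truncation operator $T^{\rm H}\bar u \in \AH$ exactly as in \eqref{eq:atm:TR}, so that \eqref{proof-4-1-1} and \eqref{D-DD} carry over verbatim from Theorem~\ref{th:appl:regularity}; this gives $\|DT^{\rm H}\bar u - D\bar u\|_{\ell^2_\gamma} \le C\|D\bar u\|_{\ell^2_\gamma(\L\setminus B_{\RMM/2})}$, which already supplies the $\RMM^{-m/2}\log^t\RMM$ contribution. Next, for the consistency estimate I would expand
\begin{align*}
  \big\< \FH(T^{\rm H}\bar u), v\big\>
  &= \big\< \FH(T^{\rm H}\bar u) - F(y_0 + T^{\rm H}\bar u), v\big\>
  + \big\< F(y_0+T^{\rm H}\bar u) - F(y_0+\bar u), v\big\>,
\end{align*}
and split the first term into the $\LQM$ part (buffer truncation of the force, estimated by $Ce^{-\kappa\Rbuf}\|Dv\|_{\ell^2_\gamma}$ using Theorem~\ref{theorem-thermodynamic-limit-force}(i) and the argument of \ref{sec-proof-c-T1}) and the $\LMM$ part (Taylor-expansion error of $\Fb$, bounded by $C\||\ee + \DD T^{\rm H}\bar u|_\gamma^{k}\|_{\ell^2_\gamma(\LMM\cup\LFF)}\|Dv\|_{\ell^2_\gamma}$, noting the force expansion is order $k$ rather than $k+1$, which is why $\alpha$ is shifted up by one relative to the energy case); the second term is $\le CL_1\|D\bar u\|_{\ell^2_\gamma(\L\setminus B_{\RMM/2})}\|Dv\|_{\ell^2_\gamma}$ by Lipschitz continuity of $F = \delta\E$. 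Substituting the decay rates of Lemma~\ref{lemma-decay-el} and Theorem~\ref{th:appl:regularity} and using $\log(\RMM/\RQM)\le C_{\rm QM}^{\rm MM}$ to absorb the $\log\RMM$ factors multiplying the $\RQM$-powers, this produces the residual bound with $\alpha=(2k+1)m/2$ for \textbf{P} and $\alpha=k$ for \textbf{D}.

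The stability step is the main obstacle. I would need to show that $\delta\FH(T^{\rm H}\bar u)$ satisfies a uniform inf-sup condition on $\Usx$. The natural route: strong stability of $\bar u$ for \eqref{eq:problem-force} means $\<\delta^2\E(\bar u)v,v\>\ge\bar c\|Dv\|_{\ell^2_\gamma}^2$, i.e.\ the QM Jacobian $\nabla F(y_0+\bar u) = \delta^2\E(\bar u)$ is coercive; then $\delta\FH(T^{\rm H}\bar u)$ differs from it by (a) the force-truncation error on $\LQM$, $O(e^{-\kappa\Rbuf})$; (b) the linearisation error $\delta\FMM - \delta\Fb$ on $\LMM$, which by \eqref{force_MM_k} is $O(\||\ee+\DD T^{\rm H}\bar u|_\gamma^{k-1}\|_{\ell^\infty_\gamma(\LMM)})$ and hence small for $\RQM$ large; and (c) the Lipschitz term $\|\delta^2\E(T^{\rm H}\bar u)-\delta^2\E(\bar u)\|$, small by \eqref{proof-4-1-1}. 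Since coercive operators are stable under small perturbations \emph{in operator norm on the full space} — even non-symmetric ones, via $\<(\delta^2\E(\bar u)+\mathcal{P})v,v\>\ge(\bar c-\|\mathcal{P}\|)\|Dv\|^2$ — I would obtain $\<\delta\FH(T^{\rm H}\bar u)v,v\>\ge\tfrac{\bar c}{2}\|Dv\|_{\ell^2_\gamma}^2$ for $\RQM,\Rbuf$ large, which in particular implies the inf-sup bound. The subtlety I expect to fight is that $\delta\FH(T^{\rm H}\bar u)$ on the \emph{finite-dimensional} test space $\Usx$ must be coercive as a map $\Usx\to(\Usx)^*$ while $\FH$ is only defined through boundary-free differences; one has to check carefully that restricting to $u=0$ on $\LFF$ does not destroy the quadratic-form lower bound (it does not, since the seminorm $\|D\cdot\|_{\ell^2_\gamma}$ restricted to $\Usx$ is still controlled). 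With stability and consistency in hand, \cite[Lemma 2.2]{ortner11} (the quantitative inverse function theorem, which only needs the derivative to be boundedly invertible, not symmetric) yields a solution $\uH$ of \eqref{problem-f-mix} with $\|D\uH - DT^{\rm H}\bar u\|_{\ell^2_\gamma}$ bounded by the consistency residual, and the triangle inequality with \eqref{proof-4-1-1} gives \eqref{error-u-f-mix}; strong stability of $\uH$ follows from the stability estimate plus the smallness of $\|D\uH-DT^{\rm H}\bar u\|_{\ell^2_\gamma}$ and Lipschitz continuity of $\delta\FH$.
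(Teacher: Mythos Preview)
Your overall architecture (quasi-best approximant $T^{\rm H}\bar u$, stability, consistency, inverse function theorem) matches the paper's. The gap is in how you pass from forces tested against $v(\ell)$ to bounds involving $\|Dv\|_{\ell^2_\gamma}$.

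In your consistency step you split $\langle\FH-F,v\rangle$ into a QM part and an MM part and assert that the MM Taylor remainder is bounded by $C\||\ee+\DD T^{\rm H}\bar u|_\gamma^{k}\|_{\ell^2_\gamma(\LMM)}\|Dv\|_{\ell^2_\gamma}$. But $\langle\FH,v\rangle=\sum_\ell\FH_\ell(u)\,v(\ell)$ is a pairing with $v$, not $Dv$. For the exact force $F=\nabla\E$ one can sum by parts to recover $Dv$, but $\FH$ in the MM region is \emph{not} a gradient: each $\FMM_\ell$ comes from a cluster $B_{\Rbuf}(\ell)$ that moves with $\ell$, so $\sum_{\ell\in\LMM}\FMM_\ell\,v(\ell)$ has no global summation-by-parts identity. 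If instead you use the embedding $|v(\ell)|\lesssim\log\RMM\,\|Dv\|$ pointwise over all of $\LMM$, the sum $\sum_{\ell\in\LMM}|\ee(\ell)|^{k+1}$ contributes an extra factor $\RQM$ (area of the annulus) and you lose one full power of $\RQM$: for case~\textbf{D} you would get $\RQM^{-(k-1)}$ rather than the claimed $\RQM^{-k}$, and for $k=1$ the bound does not even decay. The same issue recurs in your stability step when you try to bound $\langle(\delta\FH-\delta F)v,v\rangle$ by a small multiple of $\|Dv\|^2$.

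The paper resolves this by inserting the energy-mixing functional $\EH$ (taken at order $k+1$) as an intermediary: write $\langle\FH,v\rangle=\langle\delta\EH,v\rangle+\langle\FH-\nabla\EH,v\rangle$ and $\langle\delta\FH v,v\rangle=\langle\delta^2\EH v,v\rangle+\langle(\delta\FH-\delta^2\EH)v,v\rangle$. The $\EH$ pieces are already controlled by Theorem~\ref{theorem-e-mix}. The crucial point is that $\FH_\ell-\nabla_\ell\EH$ is \emph{localised}: away from a thin interface annulus $\LI=\{\RQM-\Rbuf\le|\ell|\le\RQM+\Rbuf\}$ it is exponentially small in $\Rbuf$ (using the identity $T_k\Fa_\ell=\nabla_\ell T_{k+1}\Ea$), so the pairing with $|v(\ell)|$ needs the embedding only on $\LI$, whose volume $\sim\RQM^{m-1}\Rbuf$ exactly compensates the pointwise bound $|\ell|^{-(k+1)}$ to give $\RQM^{-k}\log\RMM$. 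This localisation of the non-conservative residual to the interface is the missing idea in your sketch.
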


\begin{remark}
	In view of the bound $\log \frac{\RMM}{\RQM} \leq C_{\rm QM}^{\rm MM}$ we
	could replace $\log \RMM$ with $\log\RQM$ in \eqref{error-u-f-mix}, however,
	we keep $\log \RMM$ to highlight the dependence of the error estimate on the
	growth of $\RMM$ relative to $\RQM$.
\end{remark}

\begin{proof}
	We will follow the same strategy as the proof of Theorem \ref{theorem-e-mix}.
	
	{\it 1. Quasi-best approximation: } 
	We take the approximation $T^{\rm H}\bar{u}\in\AH$ constructed
	in the proof of Theorem \ref{theorem-e-mix}, so that the
	properties from \eqref{proof-4-1-1} to \eqref{proof-4-1-3} are satisfied.
	
	{\it 2. Stability: }
	Let $\EH$ be defined by \eqref{E-H} with $\VMM$ being the 
	$(k+1)$-th order expansion in \eqref{site_MM_k}.
	For any $v\in\Usx$, we have
	\begin{eqnarray}\label{proof-f-s}
		\big\< \delta\FH(T^{\rm H}\bar{u}) v, v \big\> 
		= \big\< \big(\delta\FH(T^{\rm H}\bar{u}) 
		- \delta^2\EH(T^{\rm H}\bar{u})\big) v, v \big\> 
		+ \big\< \delta^2\EH(T^{\rm H}\bar{u}) v, v \big\> ,
	\end{eqnarray}
	where the first term is estimated in \ref{sec-proof-s-Q1} as
	\begin{eqnarray}\label{proof-f-s-Q1}
		\left|\big\< \big(\delta\FH(T^{\rm H}\bar{u}) 
		- \delta^2\EH(T^{\rm H}\bar{u})\big) v, v \big\>\right| \leq C 
		\left( \RQM^{-k+3/4} + e^{-\kappa\Rbuf} \right) \|D v\|^2_{\ell^2_{\gamma}}
	\end{eqnarray}
	with some constant $\kappa$,
	and the second term is estimated in \S \ref{sec-e-mix-idea} \eqref{e-mix-stability}.
	Therefore, we have that for sufficiently large $\RQM,~\RMM$ and $\Rbuf$,
	\begin{eqnarray}\label{f-mix-stability}
		\big\< \delta\FH(T^{\rm H}\bar{u}) v, v \big\>  
		\geq \frac{\bar{c}}{4} \| Dv \|_{\ell^2_\gamma}^2.
	\end{eqnarray}

	{\it 3. Consistency: }
	% Let $\EH$ be defined by \eqref{E-H} with $\VMM$ being the $(k+1)$-th
	% order expansion in \eqref{site_MM_k}.
	We estimate the consistency error for any $v\in\Usx$:
	\begin{eqnarray}\label{proof-4-1-4}
		\big\<\FH(T^{\rm H}\bar{u}) , v \big\>
		=\big\<\delta\EH(T^{\rm H}\bar{u}) , v \big\> + 
		\big\<\FH(T^{\rm H}\bar{u}) - \delta\EH(T^{\rm H}\bar{u}) , v \big\> ,
	\end{eqnarray}
	where the first term has been estimated in \S \ref{sec-e-mix-idea}
	and the second term can be written as
	\begin{eqnarray}\label{proof-4-1-5}
		\nonumber
		&& \big\<\FH(T^{\rm H}\bar{u}) - \delta\EH(T^{\rm H}\bar{u}) , v \big\>
		\\[1ex] \nonumber
		&=& \sum_{\ell\in\LQM\backslash\LI}\big( \FH_{\ell}(T^{\rm H}\bar{u})
		- \nabla_{\ell}\EH(T^{\rm H}\bar{u}) \big) v(\ell)
		+ \sum_{\ell\in\LI}\big( \FH_{\ell}(T^{\rm H}\bar{u})
		- \nabla_{\ell}\EH(T^{\rm H}\bar{u}) \big) v(\ell)
		\\[1ex] \nonumber
		&& + \sum_{\ell\in\LMM\backslash\LI}\big( \FH_{\ell}(T^{\rm H}\bar{u})
		- \nabla_{\ell}\EH(T^{\rm H}\bar{u}) \big) v(\ell)
		\\[1ex]
		&:=& P_1 + P_2 + P_3
	\end{eqnarray}
	with the interface region $\LI:=\{\ell\in\L,~\RQM-\Rbuf\leq|\ell|\leq\RQM+\Rbuf\}$.
	
	To estimate $P_1$, we have from the expressions \eqref{eq-force-Du-Omega}
	% with $\Omega=\LQM\backslash\Lbuf$ 
	that for any $\ell\in\LQM\backslash\LI$,
	\begin{eqnarray}\label{proof-5-c-1}
		\nonumber
		&& \F_{\ell}^{\rm H}(T^{\rm H}\bar{u}) - \nabla_{\ell}\EH(T^{\rm H}\bar{u}) 
		\\[1ex] \nonumber
		&=& \sum_{\ell-\rho\in\Lbuf} \Big(  
		V^{\LQM\cup\Lbuf}_{\ell-\rho,\rho}\big(Du_0(\ell-\rho)
		+ DT^{\rm H}\bar{u}(\ell-\rho)\big) 
		- \VMM_{\ell-\rho,\rho}\big(Du_0(\ell-\rho) 
		+ DT^{\rm H}\bar{u}(\ell-\rho)\big) \Big)
		\\ \nonumber
		&& - \sum_{\ell+\rho\in\Lbuf}  \Big( 
		V^{\LQM\cup\Lbuf}_{\ell,\rho}\big(Du_0(\ell) + DT^{\rm H}\bar{u}(\ell)\big)
		-  \VMM_{\ell,\rho}\big(Du_0(\ell) + DT^{\rm H}\bar{u}(\ell)\big) \Big) 
		\\[1ex]
		&\leq& Ce^{-\eta\Rbuf} ,
	\end{eqnarray}
	with some constant $\eta$,
	where Theorem \ref{theorem-thermodynamic-limit} (i)
	is used for the last inequality.
	Then we have from Lemma \ref{lemma-emb}
	%$\|v\|_{\ell^6}\leq C\|Dv\|_{\ell^2_{\gamma}}$ for $m=3$ and
	%$\|v\|_{\ell^{\infty}}\leq C\log\RMM\|Dv\|_{\ell^2_{\gamma}}$
	%for $m=2$ (which is derived from Lemma \ref{lemma-emb} and
	%$v\in\Usx$, see \eqref{proof-C-2-2}) 
	that when 
	$\Rbuf>\frac{4}{\eta}\log\RQM$ and $\Rbuf>\frac{4}{\eta}\log\log\RMM$,
	\begin{eqnarray}\label{proof-5-c-P1}
		%\sum_{\ell\in\LQM} \left\< \F_{\ell}^{\rm H}(T^{\rm H}\bar{u}) 
		%- \nabla_{\ell}\EH(T^{\rm H}\bar{u}) , v \right\> 
		P_1 \leq C e^{-\frac{\eta}{4}\Rbuf} \|Dv\|_{\ell^2_{\gamma}}.
	\end{eqnarray}
	
	To estimate $P_2$, we have
	\begin{eqnarray*}
		&& |\F_{\ell}^{\rm H}(T^{\rm H}\bar{u}) - \nabla_{\ell}\EH(T^{\rm H}\bar{u})|
		\\[1ex]
		&\leq & | \F^{\rm BUF}_{\ell}(T^{\rm H}\bar{u}) - \F_{\ell}(T^{\rm H}\bar{u}) | 
		+ |\nabla_{\ell}\E(T^{\rm H}\bar{u}) - \nabla_{\ell}\EH(T^{\rm H}\bar{u})|
		\\[1ex]
		&\leq & C \Big( e^{-\eta\Rbuf} +
		\sum_{\ell-\rho\in B_{\Rbuf}(\ell)\cap\LMM} e^{-\gamma|\rho|} \cdot |\ee(\ell-\rho) 
		+ \DD T^{\rm H}\bar{u}(\ell-\rho)|_{\gamma}^{k+1} \Big)
	\end{eqnarray*}
	for $\ell\in\LI\cap\LQM$ and
	\begin{eqnarray*}
		&& |\F_{\ell}^{\rm H}(T^{\rm H}\bar{u}) - \nabla_{\ell}\EH(T^{\rm H}\bar{u})|
		\\[1ex]
		&\leq & | \FH_{\ell}(T^{\rm H}\bar{u}) - \F^{\rm BUF}_{\ell}(T^{\rm H}\bar{u}) | 
		+| \F^{\rm BUF}_{\ell}(T^{\rm H}\bar{u}) - \F_{\ell}(T^{\rm H}\bar{u}) | 
		+ |\nabla_{\ell}\E(T^{\rm H}\bar{u}) - \nabla_{\ell}\EH(T^{\rm H}\bar{u})|
		\\[1ex]
		&\leq & C \Big( e^{-\eta\Rbuf} + 
		\sum_{\ell-\rho\in B_{\Rbuf}(\ell)\cap\LMM} e^{-\gamma|\rho|} \cdot |\ee(\ell-\rho) 
		+ \DD T^{\rm H}\bar{u}(\ell-\rho)|_{\gamma}^{k+1} \Big)
	\end{eqnarray*}
	for $\ell\in\LI\cap\LMM$.
	%
	% Then we have from
	% $\|v\|_{\ell^6}\leq C\|Dv\|_{\ell^2_{\gamma}}$ (for $m=3$) and
	% $\|v\|_{\ell^{\infty}}\leq C\log\RMM\|Dv\|_{\ell^2_{\gamma}}$
	% that when $\Rbuf>\frac{4}{\kappa}\log\RQM$,
	% \begin{eqnarray}\label{proof-5-c-P2}
	% P_2 \leq C \left( e^{-\kappa\Rbuf} + \RQM \right) \|Dv\|_{\ell^2_{\gamma}}.
	% \end{eqnarray}
	%
	Let $\L^{\rm I'}:=\{\ell\in\L,~\RQM-\Rbuf\leq|\ell|\leq\RQM+2\Rbuf\}$.
	If {\bf P} is satisfied, then we have from Theorem 
	\ref{th:appl:regularity} and Lemma \ref{lemma-emb} that
	\begin{eqnarray}\label{f-mix-consistency-P2-P}
		\nonumber
		P_2 &\leq& C \sum_{\ell\in\L^{\rm I'}} 
		\left( e^{-\eta\Rbuf} + |\ell|^{-m(k+1)} \right) 
		\Big( \sum_{|\rho|\leq\Rbuf} e^{-\gamma|\rho|} \cdot |v(\ell+\rho)| \Big)
		\\[1ex] 
		&\leq& C \|Dv\|_{\ell^2_{\gamma}} \left\{ \begin{array}{ll}
			\log\RMM \cdot \Rbuf \cdot
			\left( \RQM^{-2k-1} + \RQM \cdot e^{-\eta\Rbuf}  \right)
			& {\rm if~} m=2
			\\[1ex]
			\Rbuf^{5/6} \cdot \left( \RQM^{-3k-4/3} 
			+ \RQM^{5/3} \cdot e^{-\eta\Rbuf} \right)  & {\rm if~} m=3
		\end{array} \right. . \qquad
	\end{eqnarray}
	If {\bf D} is satisfied, then
	\begin{eqnarray}\label{f-mix-consistency-P2-D}
		\nonumber
		P_2 &\leq& C \sum_{\ell\in\L^{\rm I'}} 
		\left( e^{-\eta\Rbuf} + |\ell|^{-(k+1)} \right) 
		\Big( \sum_{|\rho|\leq\Rbuf} e^{-\gamma|\rho|} \cdot |v(\ell+\rho)| \Big)
		\\[1ex] 
		&\leq& C \log\RMM \cdot \Rbuf \cdot \left( \RQM^{-k}
		+ \RQM \cdot e^{-\eta\Rbuf} \right) \|Dv\|_{\ell^2_{\gamma}} .
		\quad
	\end{eqnarray}
	
	To estimate $P_3$, let $\Fa_{\ell}(v):=F_{\ell}(Px_0+v)$ and
	$\Ea(v):=\sum_{\ell\in\L}\big(E_{\ell}(Px_0+v)-E_{\ell}(Px_0)\big)$.  
	Define
	\begin{eqnarray}\label{Tk_tildeF}
		T_k \Fa_\ell(w) = \nabla_\ell T_{k+1} \Ea(w)
		:= \frac{\partial T_{k+1} \Ea(w)}{\partial w_\ell}.
		% := \sum_{\ell\in\L} \delta \Big( T_{k+1}V_{\ell}\big(Dw(\ell)\big) 
		% - T_{k+1}V_{\ell}\big(\ee(\ell)\big) \Big).
	\end{eqnarray}
	Then, for any $\ell\in\LMM\backslash\LI$,  we have
	\begin{eqnarray*}
		&& \hspace{-2cm} |\F_{\ell}^{\rm H}(T^{\rm H}\bar{u}) 
		- \nabla_{\ell}\EH(T^{\rm H}\bar{u})|
		\\[1ex]
		&\leq & | \FH_{\ell}(T^{\rm H}\bar{u}) - T_k\Fa_{\ell}(u_0+T^{\rm H}\bar{u}) | 
		+ |\nabla_{\ell}T_{k+1}\Ea(u_0+T^{\rm H}\bar{u})  
		- \nabla_{\ell}\EH(T^{\rm H}\bar{u})|
		\\[1ex]
		&\leq& \left\{ \begin{array}{ll}
			C e^{-\eta\Rbuf} |\ell|^{-m}  & {\rm if~\bf P} 
			\\[1ex]
			C e^{-\eta\Rbuf} |\ell|^{-2}\log|\ell| & {\rm if~\bf D} 
		\end{array} \right. ,
	\end{eqnarray*}
	where the same arguments as those in Lemma \ref{lemma-vlhr-vbuf}
	are used to derive the last inequality.
	Then we have from Lemma \ref{lemma-emb} that when
	$\Rbuf>\frac{4}{\eta}\log\RQM$ and	$\Rbuf>\frac{6}{\eta}\log\log\RMM$,
	\begin{eqnarray}\label{proof-5-c-P3}
		P_3 \leq C e^{-\frac{\eta}{4}\Rbuf} \|Dv\|_{\ell^2_{\gamma}} .
	\end{eqnarray}
	
	Taking \eqref{proof-4-1-4}, \eqref{proof-4-1-5}, \eqref{proof-5-c-P1},
	\eqref{f-mix-consistency-P2-P}, \eqref{f-mix-consistency-P2-D}, \eqref{proof-5-c-P3}
	and the estimates \eqref{e-mix-consistency-P}, \eqref{e-mix-consistency-D}
	with order $k+1$ into accounts, we have the consistency
	\begin{eqnarray}\label{f-mix-consistency}
		|\big\<\FH(T^{\rm H}\bar{u}) , v \big\>|
		\leq C\|Dv\|_{\ell^2_{\gamma}}
		\left\{ \begin{array}{ll}
			\RQM^{-(2k+1)m/2}\log\RMM + \RMM^{-m/2} + e^{-\frac{\eta}{4}\Rbuf} 
			&  {\rm if~\bf P}
			\\[1ex]
			\RQM^{-k}\log\RMM + \RMM^{-1}\log\RMM + e^{-\frac{\eta}{4}\Rbuf} 
			& {\rm if~\bf D}
		\end{array} \right. 
	\end{eqnarray}
	when $\Rbuf>\frac{4}{\eta}\log\RQM$ and $\Rbuf>\frac{4}{\eta}\log\log\RMM$.

	{\it 4. Application of inverse function theorem: }	
	With the stability \eqref{f-mix-stability} and consistency
	\eqref{f-mix-consistency}, we can apply the inverse function
	theorem \cite[Lemma 2.2]{ortner11} to obtain the existence of
	$\uH$ and the estimate
	\begin{eqnarray}\label{err-f-mix-u}
		\|D\uH-D\bar{u}\|_{\ell^2_{\gamma}} \leq 
		\left\{ \begin{array}{ll}
			C\Big( \RQM^{-(2k+1)m/2}\log\RMM + \RMM^{-m/2} 
			+ e^{-\kappa \Rbuf} \Big) &  {\rm if~\bf P}
			\\[1ex]
			C \Big( \RQM^{-k}\log\RMM + \RMM^{-1}\log\RMM 
			+ e^{-\kappa \Rbuf}  \Big)  & {\rm if~\bf D}
		\end{array} \right.  
	\end{eqnarray}
	with some constant $\kappa$. This completes the proof.
\end{proof}

\section{Concluding remarks}\label{sec-conclusions}
\setcounter{equation}{0}
In this paper, we construct new QM/MM coupling algorithms for crystalline solids
with embedded defects, based on either energy-mixing or force-mixing
formulations. Unlike in commonly used QM/MM schemes, our approach does not
employ ``off-the-shelf" interatomic potentials (or forces), but constructs a
potential (or force) specifically for the coupling with the QM model. The
accuracy of our algorithms (with respect to increasing QM region size) is
quantified by rigorous convergence rates. 

In the energy-based QM/MM coupling methods, with a given size $\RQM$ of the QM
region, we observe from Theorem \ref{theorem-e-mix} that one should take
$\RMM \approx \RQM^{\alpha/\beta}$ (e.g., in the case {\bf P}, $k=2$,
$\RMM \approx \RQM^3$) and $\Rbuf \approx \log\RQM$ to balance the errors. With
these choices, we obtain the errors in Table \ref{table-e-mix}, written in terms
of $\RQM$, dropping logarithmic contributions, and writing the order of
expansion as $k = k_{\rm E}$.

In our force-mixing QM/MM scheme, we obtain precisely the same rates and hence
the same balance of approximation parameters, except that the order of expansion
in the force is one less than that of the energy in our energy-mixing
scheme. The rates are also shown in Table \ref{table-e-mix}, with
$k = k_{\rm F}$.

\begin{table}
	\begin{center}
	\begin{tabular}{r|ccc|ccc|ccc}
		& \multicolumn{3}{l}{Case {\bf P}, $m=2$} &
		\multicolumn{3}{l}{Case {\bf P}, $m=3$} &
		\multicolumn{3}{l}{Case {\bf D}}  \\[1mm]
		\hline
		$k_{\rm E}$  &  2  & 3 & 4 & 2 & 3 & 4 & 2 & 3 & 4 \\
		$k_{\rm F}$  &  1  & 2 & 3 & 1 & 2 & 3 & 1 & 2 & 3 \\
		\hline 
		$\RMM$    &  $\RQM^3$ & $\RQM^5$ & $\RQM^7$ 
		& $\RQM^3$ & $\RQM^5$ & $\RQM^7$
		& $\RQM$ & $\RQM^2$ & $\RQM^3$  \\[1mm]
		error     &  $\RQM^{-3}$ & $\RQM^{-5}$ & $\RQM^{-7}$ 
		& $\RQM^{-4.5}$ & $\RQM^{-7.5}$ & $ \RQM^{-10.5}$ 
		& $\RQM^{-1}$ & $\RQM^{-2}$ & $\RQM^{-3}$ \\[1mm]
		E-error   &  $\RQM^{-4}$ & $\RQM^{-6}$ & $\RQM^{-8}$
		&  $\RQM^{-6}$ & $\RQM^{-9}$ & $\RQM^{-12}$
		& $\RQM^{-2}$ & $\RQM^{-3}$ & $\RQM^{-4}$ \\[1mm]
		\hline
	\end{tabular}
	\caption{Choice of $\RMM$ and error with respect to $\RQM$ for QM/MM
          schemes, with MM potential order $k = k_{\rm E}$ for the energy based
          scheme and $k = k_{\rm F}$ for the force-based scheme. The energy
          error applies only for energy-mixing schemes.}
	\label{table-e-mix}
	\end{center}
\end{table}

We note in particular that, for point defects, the QM/MM hybrid scheme achieves
dramatic rates of convergence, already for a second order expansion of the site
energies, respectively first order expansion of the forces
($k_{\rm E} = 2, k_{\rm F} = 1$). By contrast, for dislocations, the second
order expansion is no better than pure QM ``clamped boundary condition"
calculations (see \cite[\S 4.2]{chen15a} and \cite{ehrlacher13}). Only higher
order expansions ($k_{\rm E} \geq 3, k_{\rm F} \geq 2$) of the site energy will
give improved rates of convergence for hybrid QM/MM simulation of dislocations.

\begin{figure}
  \centering
  \includegraphics[width=\textwidth]{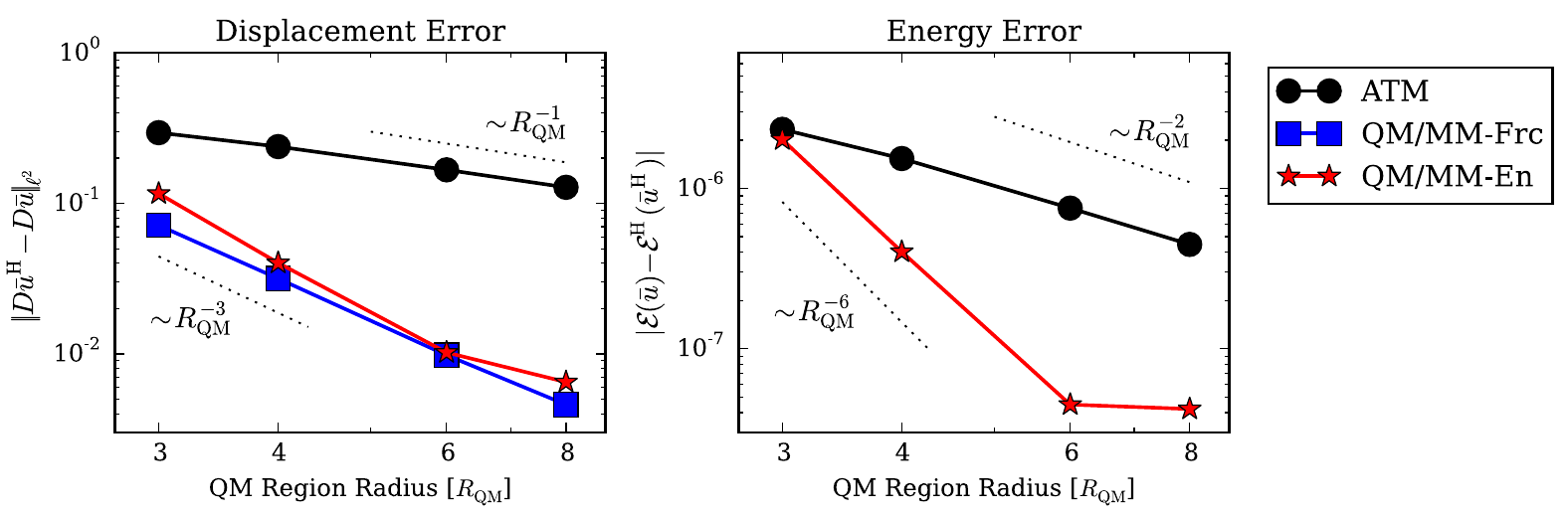}
  \caption{Numerical verification of the convergence rates predicted in Theorems
    \ref{theorem-e-mix} and \ref{theorem-f-mix} (ATM denotes a pure QM scheme as
    described in \cite{chen15a}). The results are consistent with the theory,
    but the numerical rate for the energy is better than our analytical
    prediction. (See \cite{ehrlacher13} for a similar gap in the theory).  The
    inconsistency in the rates in the last data point in the energy error, and
    to some extent also visible in the displacement error for QM-MM-En, is
    likely due to a buffer radius that is chosen slightly too small for this
    level of accuracy.}
  \label{fig:numerical-test}
\end{figure}

To limit the scope of the present work we will address the challenges in the
  implementation of both schemes in a separate article in full detail, but we
  present a preliminary numerical test. Using the TB toy model from
  \cite[Sec. 5]{chen15a}, the same simulation setup (2D triangular lattice with
  a di-vacancy defect), $k_{\rm E} = 2, k_{\rm F} = 1$, buffer radii
  $\Rbuf = 1 + 0.6 \log(\RQM)$ and MM domain radii
  $\RMM = \frac12 \RQM^3 + 2 \Rbuf$ we obtain numerical the results displayed in
  Figure \ref{fig:numerical-test}.  This test should only be considered as a
  motivation for further study, but its implementation allows us to make the
  following observations:

(1) A particular challenge in our schemes is the computational cost of higher
order expansions, which is of the order $O\big((\Rbuf)^{km}\big)$. For example,
taking only up to third neighbours in an FCC lattice
($\Rbuf / R_{\rm NN} \approx 1.7$, where $R_{\rm NN}$ is the nearest-neighbour
distance) results in 42 neighbouring atoms, which would result in over 2M
expansion coefficients at third order, and over 250M expansion coefficients at
fourth order. We will exploit lattice symmetries to reduce the number of
expansion coefficients that need to be calculated.  The fact that the order of
expansion is lower in force-based schemes, without loss of accuracy, is a
significant advantage.

% http://www.ifmpan.poznan.pl/~urbaniak/fcc%20and%20its%20neigbours01.pdf

(2) The computation of the $k$-th order expansion of the site energies requires
$k$-th order perturbation theory (or, finite-differences). By contrast, the
computation of forces and their derivatives can take advantage of the
``$2n+1$-Theorem'', hence expanding the forces is computationally much cheaper
than expanding energies, even at the same order of expansion. An analogous
comment applies to the computation of the QM region contribution to the hybrid
forces or gradient of the hybrid energy.

We conclude by commenting that, in view of the computational cost associated
with Taylor expansions as site energies, alternative approaches may be
required. Our analysis, or variations thereof, can then still be applied as long
as the MM model is tuned to interact ``correctly'' with the QM model.

% APPENDIX
\appendix
\renewcommand\thesection{\appendixname~\Alph{section}}

\section{Interpolation of lattice functions}
\label{sec-interpolation}
\renewcommand{\theequation}{A.\arabic{equation}}
\renewcommand{\thetheorem}{A.\arabic{theorem}}
\renewcommand{\thelemma}{A.\arabic{lemma}}
\renewcommand{\theproposition}{A.\arabic{proposition}}
\renewcommand{\thealgorithm}{A.\arabic{algorithm}}
\setcounter{equation}{0}

For each $u:\L\rightarrow\R^d$, we denote its continuous and
piecewise affine interpolant with respect to
$\mathcal{T}_{\L}$ by $Iu$,
% Identifying $u=Iu$, 
and its piecewise constant gradient by $\nabla Iu$.
We have the following lemma from \cite{ortner12,ortner13}.
\begin{lemma}\label{lemma-norm-interpolant}
	If $v\in\Wdot$, then there exist constants $c$ and $C$ such that
	\begin{eqnarray}\label{norm-equiv}
		c\|\nabla v\|_{L^2}\leq \|Dv\|_{\ell^2_{\gamma}} \leq C\|\nabla v\|_{L^2}.
	\end{eqnarray}
\end{lemma}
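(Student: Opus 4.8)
Following \cite{ortner12,ortner13}, the plan is to prove the equivalence in two stages, passing through an intermediate \emph{finite-range} seminorm. For each $\ell\in\L$ let $\mathcal{N}_\ell\subset\L-\ell$ be the set of edge vectors of the triangulation $\mathcal{T}_{\L}$ incident to $\ell$; since $\mathcal{T}_{\L}$ is a regular partition and $\L$ coincides with the Bravais lattice $A\Z^m$ outside a compact set, the sets $\mathcal{N}_\ell$ are uniformly finite, each $\ell$ lies in a uniformly bounded number of simplices, and $|\rho|$ is bounded above and below, uniformly, for $\rho\in\mathcal{N}_\ell$. Put $\|D_{\rm nn}v\|_{\ell^2}^2 := \sum_{\ell\in\L}\sum_{\rho\in\mathcal{N}_\ell}|D_\rho v(\ell)|^2$. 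I will show (a) $\|D_{\rm nn}v\|_{\ell^2}\asymp\|\nabla Iv\|_{L^2}$ and (b) $\|Dv\|_{\ell^2_\gamma}\asymp\|D_{\rm nn}v\|_{\ell^2}$, with constants independent of $v$ (though depending on $\gamma$, $A$, $\mathcal{T}_{\L}$, $m$); since $\|\nabla v\|_{L^2}$ means $\|\nabla Iv\|_{L^2}$, the two estimates combine to give the claim.

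For (a): on a simplex $T\in\mathcal{T}_{\L}$ the interpolant $Iv$ is affine, so $\nabla Iv|_T$ is a fixed linear combination, with coefficients bounded in terms of the uniform non-degeneracy of $T$, of the differences $D_e v$ over the edges $e$ of $T$ meeting a chosen vertex; hence $|T|\,|\nabla Iv|_T|^2\le C\sum_{e\subset T}|D_e v|^2$, and summing over $T$ (each edge lying in at most $C$ simplices) gives $\|\nabla Iv\|_{L^2}^2\le C\|D_{\rm nn}v\|_{\ell^2}^2$. Conversely, for any edge $e$ of $T$ the difference $D_e v$ equals $\nabla Iv|_T$ applied to the corresponding edge vector, so $|D_e v|^2\le C|\nabla Iv|_T|^2$, and since $|T|$ is bounded below, $|\nabla Iv|_T|^2\le C\int_T|\nabla Iv|^2$; summing over simplices yields $\|D_{\rm nn}v\|_{\ell^2}^2\le C\|\nabla Iv\|_{L^2}^2$.

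For (b): the lower bound $\|Dv\|_{\ell^2_\gamma}\ge c\|D_{\rm nn}v\|_{\ell^2}$ is immediate, because $\mathcal{N}_\ell\subset\L-\ell$ and the weight $e^{-2\gamma|\rho|}$ is bounded below on the (uniformly bounded) nearest-neighbour vectors. For the upper bound, for each $\ell$ and $\rho\in\L-\ell$ I choose a path in the $1$-skeleton of $\mathcal{T}_{\L}$ from $\ell$ to $\ell+\rho$ shadowing the segment $[\ell,\ell+\rho]$; by regularity of $\mathcal{T}_{\L}$ this path has at most $C(1+|\rho|)$ edges, so telescoping and Cauchy--Schwarz give $|D_\rho v(\ell)|^2\le C(1+|\rho|)\sum_{e\in\mathrm{path}}|D_e v|^2$. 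Inserting this into $\|Dv\|_{\ell^2_\gamma}^2$ and exchanging the order of summation, the weight attached to a fixed edge $e$ is $\sum_{(\ell,\rho):\,e\in\mathrm{path}}e^{-2\gamma|\rho|}(1+|\rho|)$, which I bound by estimating the number of pairs $(\ell,\rho)$ with $|\rho|\in[r,r+1)$ whose path passes through $e$ by $Cr^m$ (uniform discreteness of $\L$, polynomial growth of covering numbers), so that it is dominated by $\sum_r e^{-2\gamma r}(1+r)\,Cr^m<\infty$, uniformly in $e$; hence $\|Dv\|_{\ell^2_\gamma}^2\le C\|D_{\rm nn}v\|_{\ell^2}^2$.

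The main obstacle is the combinatorial bookkeeping in (b): one must check that the weighted count of long-range difference stencils routed through any given nearest-neighbour edge is summable, uniformly in the edge. The only subtle point is that $\L$ may contain long collinear chains of sites (in rational lattice directions), which is precisely what produces the extra linear factor in $|\rho|$ above; this is harmless because of the exponential weight. An alternative for the upper bound, avoiding paths, is to write $D_\rho v(\ell)=\int_0^1\nabla Iv(\ell+t\rho)\cdot\rho\,\dd t$ and apply Cauchy--Schwarz along the segment, after which the same uniform-discreteness and volume-growth counting controls the resulting sum of line integrals; whichever is shorter can be used. Since all constants are tracked to be independent of $v$, the argument also delivers the equivalence of the seminorms $\|\cdot\|_{\ell^2_\gamma}$, $\gamma>0$, invoked in the remark following \eqref{norm-equiv}.
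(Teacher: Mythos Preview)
The paper does not actually prove this lemma; it only states it and cites \cite{ortner12,ortner13}. Your argument is a faithful and correct reconstruction of the standard proof from those references: pass through a nearest-neighbour seminorm, use affine-interpolant/edge-difference equivalence on each simplex for step (a), and a telescoping-plus-counting argument exploiting the exponential weight for step (b). The combinatorial bound $\#\{(\ell,\rho):|\rho|\sim r,\ e\in\text{path}\}\le Cr^m$ is right (for fixed $\rho$ with $|\rho|\sim r$ there are $O(r)$ starting points $\ell$ whose segment hits a fixed unit-size neighbourhood, and $O(r^{m-1})$ choices of $\rho$), and the exponential weight kills the polynomial, so the upper bound in (b) goes through; the alternative line-integral route you mention is equally valid and is in fact the version used in \cite{ortner13}.
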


The following auxiliary results are useful in our analysis in that they
sometimes allow us to avoid stress-strain (``weak'') representations of residual
forces that we need to estimate.

\begin{lemma}\label{lemma-emb}
	(i) If $m = 2$, then there exists $C > 0$ such that
	\begin{align*}
		|v(\ell)-v(m)| \leq C \|Dv\|_{\ell^2_{\gamma}} \big(1+\log|\ell-m|\big) 
		\qquad \forall v \in \UsH, \quad  \ell, m \in \Lambda.
	\end{align*}
	(ii) If $m = 3$, then there exists $C > 0$ such that, for each $v \in \UsH$
	there exists $v_\infty \in \R^d$ such that
	\begin{displaymath}
		\| v-v_\infty \|_{\ell^6} \leq C \| Dv \|_{\ell^2}.
	\end{displaymath}
\end{lemma}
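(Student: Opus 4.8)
The plan is to transfer both inequalities to the continuous piecewise affine interpolant $Iv$ and then invoke classical Sobolev-type estimates on $\R^m$. Since the background partition $\mathcal{T}_\Lambda$ is regular, $\nabla Iv$ is piecewise constant, element diameters are bounded above and below by fixed constants (the finitely many elements meeting $B_{R_{\rm DEF}}$ only change these constants), and on any patch $\omega$ made up of $O(1)$ elements one has $\|\nabla Iv\|_{L^\infty(\omega)} \le C\|\nabla Iv\|_{L^2(\omega)}$; moreover, by Lemma \ref{lemma-norm-interpolant} applied to $Iv$, $\|\nabla Iv\|_{L^2(\R^m)} \le C\|Dv\|_{\ell^2_\gamma}$. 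Thus it suffices to estimate the nodal values $v(\ell) = Iv(\ell)$ through $\|\nabla Iv\|_{L^2}$.

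For part (ii), $m=3$: the Gagliardo--Nirenberg--Sobolev inequality for $\dot{W}^{1,2}(\R^3)$ yields $v_\infty \in \R^d$ with $\|Iv - v_\infty\|_{L^6(\R^3)} \le C\|\nabla Iv\|_{L^2(\R^3)}$. To return to the lattice, on each simplex $T$ (of diameter $\sim 1$, $|T|\sim 1$) with centroid $x_T$ and any vertex $\ell$, write $v(\ell) - v_\infty = \big(Iv(x_T) - v_\infty\big) + \big(Iv(\ell) - Iv(x_T)\big)$, bound the first term by $\big(\mint_T |Iv - v_\infty|^6\big)^{1/6}$ (Jensen, $Iv(x_T) = \mint_T Iv$ for affine $Iv|_T$) and the second by ${\rm diam}(T)\,\|\nabla Iv\|_{L^\infty(T)} \le C\|\nabla Iv\|_{L^2(T)}$. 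Raising to the sixth power, summing over $\ell\in\Lambda$ (each vertex lies in $O(1)$ simplices), and using $\big(\sum_T a_T^2\big)^3 \ge \sum_T a_T^6$ for $a_T \ge 0$ gives $\|v - v_\infty\|_{\ell^6} \le C\big(\|Iv - v_\infty\|_{L^6(\R^3)} + \|\nabla Iv\|_{L^2(\R^3)}\big) \le C\|Dv\|_{\ell^2_\gamma}$.

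Part (i), $m=2$, is a quantitative form of the embedding $\dot{W}^{1,2}(\R^2)\hookrightarrow{\rm BMO}$, and is the step I expect to need the most care, since a plain path-integral estimate would not produce the logarithmic factor. Write $w = Iv$ and, for a ball $B$ of radius $r$, let $\bar{w}_B$ be the average of $w$ over $B$; Poincar\'e's inequality gives $\|w - \bar{w}_B\|_{L^2(B)} \le Cr\|\nabla w\|_{L^2(B)}$, whence for two balls $B'\subset B$ with comparable radii, $|\bar{w}_{B'} - \bar{w}_{B}| \le |B'|^{-1/2}\|w - \bar{w}_B\|_{L^2(B)} \le C\|\nabla w\|_{L^2(B)}$, using $|B'|^{1/2}\sim r$ in two dimensions. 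Assume without loss of generality $R := |\ell - m|$ large (the remaining case, including $\ell = m$, follows from a short-path bound). Pick $J \sim \log_2 R$ so that $B_J := B_{2^{-J}R}(\ell)$ and $B_J' := B_{2^{-J}R}(m)$ have radius comparable to the mesh spacing, set $B_j := B_{2^{-j}R}(\ell)$, $B_j' := B_{2^{-j}R}(m)$ for $0\le j\le J$, and $\tilde B := B_{2R}(\ell) \supset B_0\cup B_0'$. Telescoping the ball-averages,
\begin{align*}
  |v(\ell) - v(m)| &\le |w(\ell) - \bar{w}_{B_J}| + \sum_{j=0}^{J-1}|\bar{w}_{B_{j+1}} - \bar{w}_{B_j}| + |\bar{w}_{B_0} - \bar{w}_{\tilde B}| \\
  &\qquad + |\bar{w}_{\tilde B} - \bar{w}_{B_0'}| + \sum_{j=0}^{J-1}|\bar{w}_{B_{j+1}'} - \bar{w}_{B_j'}| + |\bar{w}_{B_J'} - w(m)| .
\end{align*}
Each of the $2J + 3 \le C(1 + \log R)$ terms on the right is bounded by $C\|\nabla w\|_{L^2(\R^2)}$: the telescoping and ``bridging'' terms by the two-ball estimate above, and the two endpoint terms $|w(\ell) - \bar{w}_{B_J}|$, $|w(m) - \bar{w}_{B_J'}|$ (whose balls have radius $\sim 1$) by $|w(\ell) - w(x)| \le C\|\nabla w\|_{L^\infty(\omega)} \le C\|\nabla w\|_{L^2(\omega)}$ on the $O(1)$-patch $\omega$ around $\ell$. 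Summing and invoking Lemma \ref{lemma-norm-interpolant} gives (i).

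The only genuine difficulty is keeping the two-dimensional estimate sharp: the dyadic telescoping of ball-averages is arranged precisely so that the number of terms is $O(\log|\ell - m|)$ while each of them is controlled by the \emph{full} $L^2$-norm of $\nabla Iv$ --- this is the $\dot{W}^{1,2}\hookrightarrow{\rm BMO}$ mechanism in disguise. The three-dimensional bound, by contrast, needs only the Sobolev inequality together with the elementary passage from $L^6(\R^3)$ to $\ell^6(\Lambda)$.
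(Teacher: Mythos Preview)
Your argument is correct. The paper itself does not give a proof of this lemma: it simply states that ``the result is a straightforward generalisation of \cite[Proposition 12 (ii, iii)]{ortner12}'' and leaves it at that. Your write-up is therefore considerably more detailed than what the paper supplies.

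The route you take --- pass to the piecewise affine interpolant $Iv$, invoke Lemma~\ref{lemma-norm-interpolant} to replace $\|Dv\|_{\ell^2_\gamma}$ by $\|\nabla Iv\|_{L^2}$, and then use classical continuum inequalities (Gagliardo--Nirenberg--Sobolev for $m=3$, the BMO/dyadic-telescoping mechanism for $m=2$) --- is exactly the standard approach in this line of work and is almost certainly what the cited reference does. In part (ii) your transfer from $L^6(\R^3)$ back to $\ell^6(\Lambda)$ via centroid averages and the inverse estimate on simplices is clean; the inequality $\sum_T a_T^6 \le \big(\sum_T a_T^2\big)^3$ is just $\ell^2 \hookrightarrow \ell^6$. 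In part (i) the dyadic chain of ball averages is the right idea and your bookkeeping is accurate: each of the $O(\log|\ell-m|)$ increments is controlled by $\|\nabla Iv\|_{L^2}$ thanks to the two-dimensional scaling $|B'|^{1/2} \sim r$ cancelling the Poincar\'e factor $r$. The only cosmetic point is that for $\ell = m$ or $|\ell-m|$ below the minimal lattice spacing the bound should be read with $(1+\log|\ell-m|)$ replaced by a constant, which your ``short-path'' remark handles.
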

\begin{proof}
	The result is a straightforward generalisation of \cite[Proposition 12 (ii,
	iii)]{ortner12}.
\end{proof}

\begin{lemma}\label{lemma-f-Dv}
	Let $m=2$, $0<L<R$ and $v:\L\rightarrow\R$ satisfy $v(\ell)=0~\forall~|\ell|\geq R$.
	If $f:\L\rightarrow\R$ satisfies $|f(\ell)|\leq c|\ell|^{-2}$,
	then there exists a constant $C$ such that
	\begin{eqnarray}\label{estimate-f-Dv}
		\sum_{L\leq|\ell|\leq R} f(\ell)\cdot v(\ell)
		\leq C\log^{3/2}\left(\frac{R}{L}\right) \cdot \|Dv\|_{\ell^2_{\gamma}}
		\qquad\forall~v\in\Usx .
	\end{eqnarray}
\end{lemma}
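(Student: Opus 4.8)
The plan is to reduce the lattice sum to a weighted continuum integral via the piecewise affine interpolant $Iv$, and then to prove the continuum estimate by a Fubini plus double Cauchy--Schwarz argument that exploits the borderline nature of the weight $|x|^{-2}$ in dimension two.

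First I would pass to $w := Iv$. Since $v(\ell) = 0$ for $|\ell| \geq R$ and $\mathcal{T}_\Lambda$ is regular (simplex diameters bounded by a constant $h$), $w$ is Lipschitz and vanishes outside $B_{R'}$ with $R' := R + h$. Comparing the weighted lattice sum with the corresponding weighted integral of the interpolant (each simplex has volume $\sim 1$, and $|x| \sim |\ell|$ for $x$ in a simplex with a far vertex $\ell$), together with the hypothesis $|f(\ell)| \leq c|\ell|^{-2}$, gives
\begin{displaymath}
  \sum_{L \leq |\ell| \leq R} f(\ell)\, v(\ell) \;\leq\; c \sum_{L \leq |\ell| \leq R} |\ell|^{-2}\, |v(\ell)| \;\leq\; C \int_{\{ c_1 L \leq |x| \leq R \}} |x|^{-2}\, |w(x)|\, \dd x
\end{displaymath}
for some mesh constant $c_1 \in (0,1]$; here one may assume $L$ bounded below by a mesh constant, since the finitely many closer sites contribute only a lower-order term (and in the applications of the lemma $L$ is a macroscopic radius in any case).

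For the continuum integral I would use polar coordinates $x = r\omega$, $\omega \in S^1$. Since $w$ vanishes for radii $\geq R'$, the fundamental theorem of calculus along rays gives $|w(r\omega)| \leq \int_r^{R'} |\nabla w(\rho\omega)|\, \dd\rho$, so
\begin{displaymath}
  \int_{\{ c_1 L \leq |x| \leq R \}} |x|^{-2}\, |w(x)|\, \dd x \;\leq\; \int_{S^1} \int_{c_1 L}^{R} r^{-1} \Big( \int_r^{R'} |\nabla w(\rho\omega)|\, \dd\rho \Big)\, \dd r\, \dd\omega .
\end{displaymath}
Exchanging the $r$- and $\rho$-integrations produces the radial factor $\int_{c_1 L}^{\rho} r^{-1}\, \dd r \leq \tau$ with $\tau := \log\big( R/(c_1 L) \big) + 1$; a Cauchy--Schwarz in $\rho$ against the weight $\rho\, \dd\rho$ (using $\int_{c_1 L}^{R'} \rho^{-1}\, \dd\rho \leq \tau$), followed by a Cauchy--Schwarz over $S^1$, bounds the right-hand side by $C\, \tau^{1/2}\, \| \nabla w \|_{L^2(\R^2)}$. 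Altogether this yields $\int |x|^{-2} |w|\, \dd x \leq C\, \tau^{3/2}\, \|\nabla w\|_{L^2}$, and since $\tau \leq C \log(R/L)$ in the relevant regime, the norm equivalence of Lemma \ref{lemma-norm-interpolant}, $\|\nabla w\|_{L^2} = \|\nabla Iv\|_{L^2} \leq C\|Dv\|_{\ell^2_\gamma}$, completes the proof.

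The step I expect to be the main obstacle is the bookkeeping in the last estimate: the exponent $3/2$ must be assembled from exactly one full logarithmic factor (from the radial integration after Fubini) and one half of a logarithmic factor (from the weighted Cauchy--Schwarz in $\rho$), and it is easy to lose or double-count one of them; some care is also needed in the discrete-to-continuum passage near the inner radius and in absorbing the $O(h)$ enlargement of the support from $R$ to $R'$.
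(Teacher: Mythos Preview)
Your proposal is correct and follows essentially the same route as the paper: pass to the piecewise affine interpolant, bound the lattice sum by $\int_{B_{R'}\setminus B_L} |x|^{-2}|Iv|\,\dd x$, use the fundamental theorem of calculus along rays, swap the radial integrals, and finish with Cauchy--Schwarz and Lemma~\ref{lemma-norm-interpolant}. The only cosmetic difference is in how the exponent $3/2$ is assembled: the paper keeps the radial factor $\log(t/L)$ inside a single Cauchy--Schwarz over $(t,\hat{\bf r})$ and computes $\int_L^{R'} t^{-1}\log^2(t/L)\,\dd t = \tfrac13\log^3(R'/L)$, whereas you bound the logarithm uniformly by $\tau$ first and then pick up an additional $\tau^{1/2}$ from a separate Cauchy--Schwarz in $\rho$; both arrive at the same bound.
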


\begin{proof}
	For simplicity of notation let $r := |{\bf r}|, \hat{\bf r} = {\bf r}/r$ and
	$v=Iv$.  Let $R' \geq R$, minimal, such that $v = 0$ in $B_{R'}^{\rm c}$.
	%  Note that
	% $|f(\ell)|\leq C|\ell|^{-2}$ implies $|If({\bf r})|\leq Cr^{-2}$ with
	% $r=|{\bf r}|$. 
	For each $T \in \mathcal{T}_\Lambda, T \subset B_{R'} \setminus B_L$ we have
	\begin{align*}
		\bigg| \sum_{\ell \in \Lambda \cap T} f(\ell)\cdot v(\ell) \bigg|
		\leq \max_{\ell \in \Lambda \cap T} |f(\ell)| \sum_{\ell \in \Lambda \cap T} 
		|v(\ell)| \leq C \max_{\ell \in \Lambda \cap T}|\ell|^{-2} 
		\int_T |Iv({\bf r})| \dd{\bf r} 
		\leq C \int_T r^{-2} |Iv({\bf r})| \dd{\bf r},
	\end{align*}
	Therefore, it follows that
	\begin{equation}
		\label{proof-A-2}
		\bigg|\sum_{L\leq|\ell|\leq R} f(\ell)\cdot v(\ell)\bigg|
		\leq C \int_{B_{R'}\backslash B_L} \frac{|Iv({\bf r})|}{r^2} \dd {\bf r}.
	\end{equation}
	
	%  \co{[Please replace $R$ with $R'$ in the following]} 
	We have from the estimate
	\begin{displaymath}
		|v({\bf r})| = \bigg| \int_{r}^{R'} \frac{d}{dt} v(t\hat{\bf r}) \dd t \bigg|
		\leq \int_{r}^{R'} |\nabla v(t\hat{\bf r})| \dd t.
	\end{displaymath}
	that
	\begin{align*}
		\int_{B_{R'}\backslash B_L} \frac{|v({\bf r})|}{r^2} \dd{\bf r}
		&= 
		\int_{S^{m-1}} \int_{r=L}^{R'} r^{-1}|v(r\hat{\bf r})| \dd r \dd{\hat{\bf r}} 
		\leq 
		\int_{S^{m-1}} \int_{r=L}^{R'} r^{-1} \int_{t = r}^{R'} |\nabla v(t\hat{\bf r})| \, 
		\dd t\, \dd r\, \dd\hat{\bf r}  \\
		&= 
		\int_{S^{m-1}} \int_{t = L}^{R'} |\nabla v(t\hat{\bf r})|  \int_{r = L}^tr^{-1}  \, 
		\dd r\, \dd t\, \dd\hat{\bf r}   \\
		&= 
		\int_{S^{m-1}} \int_{t = L}^{R'} t^{1/2}|\nabla v(t\hat{\bf r})| t^{-1/2}  
		\log \frac{t}{L} \, \dd t\, \dd\hat{\bf r}  \\
		&\leq \| \nabla v \|_{L^2(B_{R'}\backslash B_L)}  \bigg( \int_{S^{m-1}} 
		\int_{t = L}^{R'} t^{-1} \log^2 (t/L)\,\dd t \, \dd\hat{\bf r} \bigg)^{1/2} \\
		&\leq C \log^{3/2} \left(\frac{R'}{L}\right) \| \nabla v \|_{L^2}.
	\end{align*}
	Applying Lemma \ref{lemma-norm-interpolant}, \eqref{proof-A-2} and noting that
	$\log(R'/R) \leq C$ completes the proof.
\end{proof}

\section{Estimates of buffer truncations}
\label{sec-proof-c-T1}
\renewcommand{\theequation}{B.\arabic{equation}}
\renewcommand{\thetheorem}{B.\arabic{theorem}}
\renewcommand{\thelemma}{B.\arabic{lemma}}
\renewcommand{\theproposition}{B.\arabic{proposition}}
\renewcommand{\thealgorithm}{B.\arabic{algorithm}}
\setcounter{equation}{0}

\def\bg{{\bm g}}
\def\Vlh{V_{\ell}^{\frac{|\ell|}{3}}}
\def\Vlhr{V_{\ell,\rho}^{\frac{|\ell|}{3}}}
\def\VlA{V_{\ell}^{\rm A}}
\def\VlAr{V_{\ell,\rho}^{\rm A}}
\def\Vlbr{V^{\rm BUF}_{\#,\rho}}

The following lemma can be proved by a calculation with the
contour integration of the resolvent
(we refer the detailed proofs to the arguments in
\cite[(3.10)-(3.12)]{chen15a}).

\begin{lemma}\label{lemma-Vl-MN-err}
	Let $\ell\in M\subsetneq N\subset\L$.  Let
	$y\in\mathcal{V}_{\mathfrak{m}}(\L)$, $0\leq j\leq\mathfrak{n}-1$, and
	${\bm \rho}=(\rho_1,\cdots,\rho_j)\in (M-\ell)^j$.  Then there exist constants
	$C$ and $\eta$, depending on $\mathfrak{m}$, such that
	\begin{eqnarray}\label{Vl-MN-err}
		\left| 
		V^M_{\ell,\pmb{\rho}}\big(Dy(\ell)\big)
		- V^N_{\ell,\pmb{\rho}}\big(Dy(\ell)\big)
		\right| 
		\leq Ce^{ -\eta \left( {\rm dist}(\ell,N\backslash M) 
			+ \sum_{i=1}^{j}|\rho_i| \right)}
	\end{eqnarray}
	with ${\rm dist}(\ell,\Omega)=\min_{k\in \Omega}\{|\ell-k|\}$.
	Moreover, we have
	\begin{eqnarray}\label{Vl-Minf-err}
		\left| 
		V^M_{\ell,\pmb{\rho}}\big(Dy(\ell)\big)
		- V_{\ell,\pmb{\rho}}\big(Dy(\ell)\big)
		\right| 
		\leq Ce^{ -\eta \left( {\rm dist}(\ell,\L\backslash M) 
			+ \sum_{i=1}^{j}|\rho_i| \right) }.
	\end{eqnarray}
\end{lemma}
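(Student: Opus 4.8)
The plan is to follow the contour‑integral arguments of \cite[(3.10)--(3.12)]{chen15a}: one writes the site‑energy derivatives through a contour integral of products of resolvents of the truncated Hamiltonian, and then controls the difference between the $M$‑ and $N$‑truncated quantities by a resolvent identity that is localised near $N\setminus M$.

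First, for a finite index set $\Omega$ write $\mathscr{R}_z^{\Omega}:=(\mathcal{H}^{\Omega}(y)-z)^{-1}$. As in \cite{chen15a}, the assumptions on the Hamiltonian matrix elements together with {\bf L} guarantee that the spectra $\sigma(\mathcal{H}^{\Omega}(y))$ are contained in a fixed bounded real interval, so we may fix a finite‑length contour $\mathscr{C}\subset\mathbb{C}$ encircling all of them, staying a fixed distance away from them, and avoiding the Matsubara poles of $f$, such that $E_\ell^{\Omega}(y)=\frac{1}{2\pi i}\oint_{\mathscr{C}}f(z)\,z\,[\mathscr{R}_z^{\Omega}]_{\ell\ell}\,\dd z$ (with the orientation convention of \cite{chen15a}). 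Differentiating $j$ times in the atomic positions and repeatedly using $\partial_\nu\mathscr{R}_z^{\Omega}=-\mathscr{R}_z^{\Omega}(\partial_\nu\mathcal{H}^{\Omega})\mathscr{R}_z^{\Omega}$ together with the Leibniz rule expresses $V^{\Omega}_{\ell,\pmb{\rho}}(Dy(\ell))$ as a sum of boundedly many (in terms of $j$) contour integrals of alternating products of resolvents $\mathscr{R}_z^{\Omega}$ and (various‑order) partial derivatives of $\mathcal{H}^{\Omega}$, the latter taken in coordinates of $y(\ell+\rho_1),\ldots,y(\ell+\rho_j)$, with the product contracted at the indices $(\ell,\ell)$. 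Two localisation facts then enter: along $\mathscr{C}$ one has the Combes--Thomas estimate $|[\mathscr{R}_z^{\Omega}]_{mn}|\le C e^{-\gamma|y(m)-y(n)|}$, uniformly in $z\in\mathscr{C}$ and $\Omega$; and, by the finite cut‑off $R_{\rm c}$ in \eqref{tb-H-elements}, every partial derivative of $\mathcal{H}^{\Omega}$ with respect to a coordinate of $y(k)$ is supported within a fixed distance of $k$. Feeding these into the expansion and using {\bf L} to bound the number of lattice sites appearing in each summation, a walk‑counting argument (with the decay rate reduced by a $j$‑dependent factor) reproduces the locality estimate of Theorem \ref{theorem-thermodynamic-limit}(i) for each $V^{\Omega}_{\ell,\pmb{\rho}}$.

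For the comparison between $M$ and $N$, let $\iota:\ell^2(M)\hookrightarrow\ell^2(N)$ be the inclusion, so that $[\mathscr{R}_z^{N}]_{\ell\ell}=[\iota^*\mathscr{R}_z^{N}\iota]_{\ell\ell}$ for $\ell\in M$, and use the resolvent identity $\iota^*\mathscr{R}_z^{N}\iota-\mathscr{R}_z^{M}=\mathscr{R}_z^{M}(\mathcal{H}^{M}\iota^*-\iota^*\mathcal{H}^{N})\mathscr{R}_z^{N}\iota$. By \eqref{tb-H-elements} the entries of $\mathcal{H}^{M}\iota^*-\iota^*\mathcal{H}^{N}$ vanish unless one of the two indices lies within $R_{\rm c}$ of $N\setminus M$: the hopping entries of $\mathcal{H}$ do not depend on the index set, while the on‑site entry at a site $m$ differs between $\mathcal{H}^{M}$ and $\mathcal{H}^{N}$ only through the omitted terms $\varrho(|y(m)-y(j)|)$, $j\in N\setminus M$, which are nonzero only for $j$ within $R_{\rm c}$ of $m$. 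Substituting this identity into each matched pair of terms of the two expansions of $V^{M}_{\ell,\pmb{\rho}}$ and $V^{N}_{\ell,\pmb{\rho}}$, every resulting term acquires a resolvent leg that must traverse the region $N\setminus M$; combining this with the Combes--Thomas bound and the same walk‑counting as above, and integrating the bounded factor $f(z)z$ over the finite‑length contour $\mathscr{C}$, yields \eqref{Vl-MN-err} with $C$ and $\eta$ depending only on $\mathfrak{m}$ and the fixed data of the TB model. Estimate \eqref{Vl-Minf-err} then follows by letting $N\uparrow\Lambda$ in \eqref{Vl-MN-err}: the limit $V_{\ell,\pmb{\rho}}=\lim_{N}V^{N}_{\ell,\pmb{\rho}}$ exists (established alongside Theorem \ref{theorem-thermodynamic-limit}), and since the nearest site of $\Lambda\setminus M$ to $\ell$ lies at a fixed finite distance, ${\rm dist}(\ell,N\setminus M)={\rm dist}(\ell,\Lambda\setminus M)$ for all sufficiently large $N$, so the right‑hand side of \eqref{Vl-MN-err} is eventually constant and the bound passes to the limit.

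The one genuinely delicate point is the bookkeeping in the Leibniz expansion and the subsequent walk‑counting: one must organise $\partial^j[\mathscr{R}_z^{\Omega}]_{\ell\ell}$ into boundedly many resolvent products and, in the difference, keep track of which resolvent factor is pushed through $N\setminus M$, so that the decay in ${\rm dist}(\ell,N\setminus M)$ and the decay in $\sum_i|\rho_i|$ are extracted simultaneously (with $\eta$ taken small enough that the shortest admissible lattice walk dominates ${\rm dist}(\ell,N\setminus M)+\sum_i|\rho_i|$). This is precisely the computation carried out in \cite[(3.10)--(3.12)]{chen15a}, which we invoke rather than reproduce.
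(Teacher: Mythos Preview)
Your proposal is correct and follows the same approach as the paper: the paper does not give a self-contained proof but simply refers to the contour-integral resolvent arguments in \cite[(3.10)--(3.12)]{chen15a}, which is precisely the machinery you invoke and sketch in more detail. Your outline of the resolvent identity localising the difference near $N\setminus M$, combined with Combes--Thomas decay and walk-counting, is exactly the intended argument.
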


A direct consequence of Lemma \ref{lemma-Vl-MN-err} is 
\begin{eqnarray}\label{Vl-buf-err}
	\left| 
	V_{\ell,\pmb{\rho}}\big(Dy(\ell)\big)
	- V_{\ell,\pmb{\rho}}^{\rm BUF}\big(Dy(\ell)\big)
	\right| 
	\leq Ce^{ -\eta R_{\rm BUF} }
\end{eqnarray}
for $y\in\mathcal{V}_{\mathfrak{m}}(\L)$
and ${\bm \rho}=(\rho_1,\cdots,\rho_j)\in(\L-\ell)^j$
with $0\leq j\leq\mathfrak{n}-1$, $\max_i|\rho_i|\leq\Rbuf$.

For $y\in\mathcal{V}_{\mathfrak{m}}$, $\ell\in\LMM\cup\LFF$
and $R>0$, we define
$V_{\ell}^R\big(Dy(\ell)\big) := V_{\ell}^{B_R(\ell)\backslash B_{\Rcore}}\big(Dy(\ell)\big)$.
Therefore, we have
\begin{eqnarray}
	V^R_{\ell,\rho}\big(Dy(\ell)\big) := \left\{ \begin{array}{ll} 
		0, & {\rm if}~ |\rho|>R ~{\rm or}~ \ell+\rho\in \L\cap B_{\Rcore} \\[1ex]
		V_{\ell,\rho}^{B_R(\ell)\backslash B_{\Rcore}}\big(Dy(\ell)\big), & {\rm otherwise}
	\end{array}  \right. .
\end{eqnarray}
The difference between $V_{\ell}$ and $V_{\ell}^R$ can be estimated using Lemma
\ref{lemma-Vl-MN-err}:
If $R<|\ell|-\Rcore$, $0\leq j\leq\mathfrak{n}-1$, and
${\bm \rho} \in(\L-\ell)^j$ with $\max_i|\rho_i|\leq R$,
then there exist constants $C$ and $\eta$ such that 
\begin{eqnarray}\label{VlR-err}
	\left| 
	V_{\ell,\pmb{\rho}}\big(Dy(\ell)\big)
	- V_{\ell,\pmb{\rho}}^{R}\big(Dy(\ell)\big)
	\right| 
	\leq Ce^{ -\eta \left( R
		+ \sum_{i=1}^{j}|\rho_i| \right) }.
\end{eqnarray}

The next lemma establishes the homogeneity of the site energy $V_\ell^R$.

\begin{lemma}\label{lemma-hom}
	Let $\ell,k\in\L$ and $R<\min\{|\ell|,|k|\}-\Rcore$.  If
	$y,y'\in \mathcal{V}_{0}(\L)$ satisfy $D_{\rho}y(\ell)=D_{\rho}y'(k)$ for
	all $|\rho|<R$, then
	\begin{eqnarray}\label{hom-VlR}
		V_{\ell}^{R}\big(Dy(\ell)\big) = V_{k}^{R}\big(Dy'(k)\big)
		\qquad{\rm and}\qquad
		V_{\ell,{\bm\rho}}^{R}\big(Dy(\ell)\big) =
		V_{k,{\bm\rho}}^{R}\big(Dy'(k)\big)
	\end{eqnarray}
	for ${\pmb \rho}\in(A\Z^m\cap B_R-0)^j$, $\max_i|\rho_i|<R$.
\end{lemma}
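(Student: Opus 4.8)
The plan is to reduce both identities to the relabelling-invariance (Theorem~\ref{theorem-thermodynamic-limit}(iii)) and isometry-invariance (Theorem~\ref{theorem-thermodynamic-limit}(ii)) of the finite-system site energies, after observing that under the hypothesis $R<\min\{|\ell|,|k|\}-\Rcore$ the truncated neighbourhoods $B_R(\ell)\cap\L$ and $B_R(k)\cap\L$ are exact lattice translates of one another. First I would record the geometry: since $R<|\ell|-\Rcore$, every point of $B_R(\ell)$ has norm $>\Rcore$, hence $\ell\in A\Z^m$, $B_R(\ell)\setminus B_{\Rcore}=B_R(\ell)$ and $B_R(\ell)\cap\L=\ell+(A\Z^m\cap B_R)$, and symmetrically at $k$. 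Writing $\Omega_\ell:=B_R(\ell)\cap\L$ and $\Omega_k:=B_R(k)\cap\L$, the shift $\tau(x):=x+(k-\ell)$ maps $\Omega_\ell$ bijectively onto $\Omega_k$ (because $k-\ell\in A\Z^m$) with $\tau(\ell)=k$. I would also recall that $V_\ell^R(Dy(\ell))=E_\ell^{\Omega_\ell}(y)$ and that, for a finite subsystem $\Omega$, the entries of $\mathcal{H}(y)$ depend only on $\{y(p)-y(q):p,q\in\Omega\}$, so $E_\ell^\Omega(y)$ depends only on $y|_\Omega$.

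Next I would unwind the matching hypothesis: $D_\rho y(\ell)=D_\rho y'(k)$ for all $|\rho|<R$ says exactly that $y(\ell+\rho)-y'(k+\rho)$ equals the constant $c:=y(\ell)-y'(k)$ for every $\rho\in A\Z^m\cap B_R$, i.e.\ $y\circ\tau^{-1}=y'+c$ on $\Omega_k$. Applying Theorem~\ref{theorem-thermodynamic-limit}(iii) with any permutation of $\L$ that restricts to $\tau^{-1}$ on $\Omega_k$ gives $E_\ell^{\Omega_\ell}(y)=E_k^{\Omega_k}(y\circ\tau^{-1})$, and Theorem~\ref{theorem-thermodynamic-limit}(ii) with the translation $x\mapsto x+c$ gives $E_k^{\Omega_k}(y\circ\tau^{-1})=E_k^{\Omega_k}(y'+c)=E_k^{\Omega_k}(y')$; chaining these yields $V_\ell^R(Dy(\ell))=V_k^R(Dy'(k))$. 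The same argument, applied to a pair of configurations realising a prescribed finite stencil $\bm g=(g_\rho)_{\rho\in A\Z^m\cap B_R,\,\rho\neq 0}$ at $\ell$ and at $k$, shows that $V_\ell^R$ and $V_k^R$ are literally the same function of $\bm g$; differentiating this identity in the stencil coordinates $\rho_1,\dots,\rho_j$ — which are genuine interior derivatives since each $\rho_i\in A\Z^m\cap B_R$ with $\max_i|\rho_i|<R$, so no truncation boundary is hit — gives the second identity $V_{\ell,\bm\rho}^R(Dy(\ell))=V_{k,\bm\rho}^R(Dy'(k))$.

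I expect no genuine obstacle here; the only points that need care are the bookkeeping that makes $B_R(\ell)\cap\L$ and $B_R(k)\cap\L$ honest translates — this is where the hypothesis $R<\min\{|\ell|,|k|\}-\Rcore$ together with the defect-localisation assumptions in {\bf P}/{\bf D} enter, and where the strict inequality is used to keep the truncation disjoint from the core region $B_{\Rcore}$ — and checking that the differentiation directions $\rho_i$ lie strictly inside $B_R$, so that the hard truncation contributes no spurious dependence on the stencil. I would also note in passing that the invariances in Theorem~\ref{theorem-thermodynamic-limit}(ii),(iii) are purely algebraic and hence remain valid on $\mathcal{V}_0(\L)$.
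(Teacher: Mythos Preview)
Your proposal is correct and follows essentially the same approach as the paper's proof: both apply the permutation invariance of Theorem~\ref{theorem-thermodynamic-limit}(iii) with the lattice shift $\mathcal{G}(x)=x-\ell+k$ (your $\tau$) and the isometry invariance of Theorem~\ref{theorem-thermodynamic-limit}(ii) with the translation by $y(\ell)-y'(k)$, then observe that the derivative identity is a direct consequence. Your version is more explicit about the geometric point that $B_R(\ell)\cap\L$ and $B_R(k)\cap\L$ are pure lattice patches (avoiding $B_{\Rcore}$) and hence exact translates, and about extending $\tau$ to a permutation of all of $\L$, but these are elaborations of the same argument rather than a different route.
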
  

\begin{proof}
	Using the condition 
	$\{D_{\rho}y(\ell)\}_{|\rho|<R} =
	\{D_{\rho}y'(k)\}_{|\rho|<R}$
	and
	Theorem \ref{theorem-thermodynamic-limit} 
	(ii) with $g(x)=x-x(\ell)+x(k)$,
	(iii) with $\mathcal{G}(x)=x-\ell+k$, 
	we can derive $V_{\ell}^{R}\big(Dy(\ell)\big) = V_{k}^{R}\big(Dy'(k)\big)$.
	Then the second part of \eqref{hom-VlR} is a direct consequence.
\end{proof}

Before the proof of \eqref{proof-c-T1}, we need the following
estimate for $\VlAr-\Vlbr$ on the predictor, where the auxiliary
site potential $\VlA$ is defined by
\begin{eqnarray*}
	\VlA({\bf g}) := \left\{ \begin{array}{ll}
		V_{\ell}^{|\ell|-\Rcore}({\bf g}) & {\rm if}~|\ell|\leq 3\RQM \\[1ex]
		V_{\ell}^{\frac{|\ell|}{3}}({\bf g}) & {\rm if}~|\ell|> 3\RQM
	\end{array}\right.
	\quad {\rm for}~{\bf g}\in\big(\R^d\big)^{\L-\ell} .
\end{eqnarray*}

\begin{lemma}\label{lemma-vlhr-vbuf}
	Let $\Rbuf>\max\{\frac{4}{\eta}\log\RQM,\frac{6}{\eta}\log\log\RMM\}$,
	where $\eta$ is the constant from Lemma \ref{lemma-Vl-MN-err}.  
	If the assumption {\bf P} or {\bf D} is satisfied, 
	then there exists a constant $C$ such that
	\begin{eqnarray}\label{vlhr-vbuf}
		\sum_{\ell\in\LMM\cup\LFF} 
		\big\< \delta V_{\#}^{\rm BUF}\big(\ee(\ell)\big) 
		- \delta \VlA\big(\ee(\ell)\big) , \DD v(\ell) \big\> 
		\leq  Ce^{-\frac{\eta}{4}}  \|Dv\|_{\ell^2_{\gamma}} 
		\qquad \forall v \in \Usx.
	\end{eqnarray}	
\end{lemma}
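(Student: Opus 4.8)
\emph{The plan.} The first step is a reduction: since $\RQM>\Rcore+\Rbuf$, for $\ell\in\LMM\cup\LFF$ the homogeneity \eqref{site-hom} identifies $V_{\#}^{\rm BUF}(\cdot)$ with $V_\ell^{B_{\Rbuf}(\ell)}(\cdot)$, while by construction $\VlA=V_\ell^{R_\ell}$ with $R_\ell\in\{|\ell|-\Rcore,\,|\ell|/3\}$, so that $\Rbuf<R_\ell$ always; hence we only have to compare two truncation radii of the (infinite-range) site energy $V_\ell$. In case {\bf D} one first uses the slip invariance \eqref{slip-invariance} together with Theorem~\ref{theorem-thermodynamic-limit}(iii) to pass to the relabelled, integer-shifted predictor, after which {\bf P} and {\bf D} can be treated in parallel, with $\ee(\ell)$ in the role of the predictor strain. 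Writing $\Sigma$ for the left-hand side of the asserted inequality, I would then establish the uniform componentwise bound
\[
  \big|V^{B_{\Rbuf}(\ell)}_{\ell,\rho}(\ee(\ell))-V^{R_\ell}_{\ell,\rho}(\ee(\ell))\big|
  \leq C\,e^{-\tfrac{\eta}{4}\Rbuf}\,e^{-\tfrac{\eta}{4}|\rho|}
  \qquad\text{for all }\ell\in\LMM\cup\LFF,\ \rho\in\L-\ell ,
\]
obtained from Lemma~\ref{lemma-Vl-MN-err} (applied with $M=\L\cap B_{\Rbuf}(\ell)$, $N=\L\cap(B_{R_\ell}(\ell)\setminus B_{\Rcore})$, for which ${\rm dist}(\ell,N\setminus M)\geq c\,\Rbuf$) in the regime $|\rho|<\Rbuf$, and from \eqref{VlR-err} together with the locality in Theorem~\ref{theorem-thermodynamic-limit}(i) in the regime $\Rbuf\leq|\rho|<R_\ell$ (where the first term vanishes); the loss of a factor in these exponents — needed to absorb the $|\rho|$-decay into the $\Rbuf$-decay — is what produces the rate $\eta/4$.

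\emph{The key estimate.} Transposing the finite-difference operator $\DD$ (the slipped $\widetilde{D}$ in case {\bf D}), $\Sigma=\sum_{k\in\L}D(k)\cdot v(k)$, where $D:\L\to\mathbb{R}^d$ is the first variation at the predictor of the \emph{translation-invariant} lattice functional $w\mapsto\sum_{\ell\in\LMM\cup\LFF}\big(V_\ell^{B_{\Rbuf}(\ell)}-V_\ell^{R_\ell}\big)(\ee(\ell)+\DD w(\ell))$; equivalently $D(k)=\sum_{\ell\in\LMM\cup\LFF}\partial_{y(k)}\big(E_\ell^{B_{\Rbuf}(\ell)}-E_\ell^{B_{R_\ell}(\ell)\setminus B_{\Rcore}}\big)(y_0)$, a genuine residual force ($\sum_k D(k)=0$). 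The decisive observation is that $D$ is not merely $O(e^{-\frac{\eta}{4}\Rbuf})$ in size but is essentially supported in an $O(\Rbuf)$-thick collar of the QM/MM interface $\partial\LQM$. Indeed, for $k$ in the interior of $\LMM$ the two sums $\sum_{\ell\in\L}\partial_{y(k)}E_\ell^{B_{\Rbuf}(\ell)}(y_0)$ and $\sum_{\ell\in\L}\partial_{y(k)}E_\ell^{B_{R_\ell}(\ell)}(y_0)$ are exact forces: for a perfect Bravais reference (case {\bf P}, where $y_0=x_0$) the force of a constant-radius truncated, translation-invariant energy on any bulk site vanishes \emph{identically} — all sites are equivalent and the total force is zero — so $D(k)$ is negligible ($\lesssim e^{-c|k|}$) away from $\partial\LQM$, the only surviving contribution coming from the terms with $\ell\in\LQM$, which by Theorem~\ref{theorem-thermodynamic-limit}(i) — and because these defect-side terms in the two truncations cancel to leading order — give $|D(k)|\lesssim e^{-\frac{\eta}{4}\Rbuf}(1+|k|)^{(m-1)/2}e^{-c\,{\rm dist}(k,\partial\LQM)}$; in case {\bf D} the residual of a slowly varying (locally affine) Bravais predictor is proportional to the strain gradient, leaving in addition a bulk term with $|D(k)|\lesssim e^{-\frac{\eta}{4}\Rbuf}|k|^{-2}$.

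\emph{Conclusion; the main obstacle.} It remains to estimate $\sum_k D(k)\cdot v(k)$. For the interface collar I would invoke the embedding Lemma~\ref{lemma-emb} — using $v\equiv0$ on $\LFF$, this gives $|v(k)|\leq C\|Dv\|_{\ell^2_\gamma}\log\RMM$ when $m=2$, and $\|v\|_{\ell^6}\leq C\|Dv\|_{\ell^2}$ together with H\"older's inequality when $m=3$ — over a shell of surface measure $\lesssim\RQM^{m-1}$, and for the case-{\bf D} bulk term I would use Lemma~\ref{lemma-f-Dv}, which yields $\sum_{\RQM\le|k|\le\RMM}|k|^{-2}|v(k)|\lesssim\log^{3/2}(\RMM/\RQM)\|Dv\|_{\ell^2_\gamma}$. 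Altogether $|\Sigma|\leq C\,e^{-\frac{\eta}{4}\Rbuf}\,\RQM^{a}\,(\log\RMM)^{b}\,\|Dv\|_{\ell^2_\gamma}$ with explicit constants $a,b$, and the standing hypotheses $\Rbuf>\tfrac4\eta\log\RQM$ and $\Rbuf>\tfrac6\eta\log\log\RMM$ are exactly what is needed so that $e^{-\frac{\eta}{4}\Rbuf}$ absorbs $\RQM^{a}$ and $(\log\RMM)^{b}$, respectively, leaving the claimed bound. The crux — and the only delicate point — is the localisation in the previous step: a direct $\ell^2$-Cauchy--Schwarz estimate of the sum over $\LMM\cup\LFF$ would cost a fatal volume factor $(\#\LMM)^{1/2}\sim\RMM^{m/2}$, and avoiding it forces one to exploit that $\Sigma$ is the first variation of a translation-invariant lattice functional whose bulk residual vanishes exactly on the homogeneous reference ({\bf P}) or is controlled by the decaying predictor strain gradient ({\bf D}).
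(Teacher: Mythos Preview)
Your overall strategy coincides with the paper's: transpose the finite-difference pairing to a force representation $\Sigma=\sum_k\mathfrak f(k)\,v(k)$, argue that the bulk contribution to $\mathfrak f$ vanishes in case {\bf P} and decays like $e^{-\eta\Rbuf}|\ell|^{-2}$ in case {\bf D} (via the decay of the predictor strain gradient, Lemma~\ref{lemma-decay-el}), and then feed the resulting pointwise bound into Lemma~\ref{lemma-f-Dv}, using the hypotheses on $\Rbuf$ to absorb the polynomial and logarithmic losses. The paper organises the same computation slightly differently---it first splits at $|\ell|=3\RQM$ and treats the inner annulus crudely by \eqref{Vl-buf-err}---but the mechanism is identical.

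There is, however, a real slip in your localisation step. You claim that the extended sum $\sum_{\ell\in\L}\partial_{y(k)}E_\ell^{B_{R_\ell}(\ell)}(y_0)$ is the force of a ``constant-radius truncated, translation-invariant energy'' and therefore vanishes on bulk sites in case {\bf P}. But $R_\ell\in\{|\ell|-\Rcore,\,|\ell|/3\}$ depends on $\ell$, so the associated energy is \emph{not} translation-invariant and its force on the perfect lattice is not identically zero. The paper repairs this by freezing the radius: for each $\ell$ with $|\ell|>3\RQM$ it writes
\[
  f^{\rm A}(\ell)=\underbrace{\sum_{|\rho|\le|\ell|/3}\Big(V^{\rm A}_{\ell-\rho,\rho}-V^{|\ell|/3}_{\ell-\rho,\rho}\Big)\big(Du_0(\ell-\rho)\big)}_{f_a(\ell)}
  +\underbrace{\sum_{|\rho|\le|\ell|/3}\Big(V^{|\ell|/3}_{\ell-\rho,\rho}\big(Du_0(\ell-\rho)\big)-V^{|\ell|/3}_{\ell,\rho}\big(Du_0(\ell)\big)\Big)}_{f_b(\ell)},
\]
so that $f_b$ uses a \emph{single} radius and the homogeneity Lemma~\ref{lemma-hom} gives $f_b(\ell)=0$ in case {\bf P} (and in case {\bf D} your Taylor-in-$e$ argument applies to $f_b-f^{\rm BUF}$, yielding the $e^{-\eta\Rbuf}|\ell|^{-2}$ bound). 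The correction $f_a$ is controlled by Lemma~\ref{lemma-Vl-MN-err} as $|f_a(\ell)|\le C e^{-\frac{2}{9}\eta|\ell|}$, which is indeed negligible---so your asserted decay $|D(k)|\lesssim e^{-c|k|}$ is ultimately correct, just not for the reason you give. Once this intermediate step is inserted, the remainder of your argument is sound.
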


\begin{proof}
	The left-hand side of \eqref{vlhr-vbuf} can be written in the form
	\begin{multline}\label{proof-A-2-0}
		\sum_{\ell\in\LMM\cup\LFF} \sum_{\rho\in\L-\ell}
		% \sum_{\rho\in A\Z^d-0,|\rho|<\frac{|\ell|}{3}}
		\left( V_{\#,\rho}^{\rm BUF}\big(Du_0(\ell)\big) 
		- \VlAr\big(Du_0(\ell)\big) \right)
		\big( v(\ell+\rho) - v(\ell) \big) 
		\\[1ex]
		:= \sum_{\RQM-\Rcore \leq |\ell| \leq 3\RQM}
		\big( \tilde{f}^{\rm A}(\ell) 
		- \tilde{f}^{\rm BUF}(\ell) \big) v(\ell) 
		+ \sum_{|\ell|>3\RQM} \big(
		f^{\rm A}(\ell) - f^{\rm BUF}(\ell) \big) v(\ell),
		\quad
	\end{multline}
	where $f^{\rm A}(\ell)$ and
	$\tilde{f}^{\rm A}(\ell)$ are
	given in terms of $\VlAr(Du_0(\ell))$,
	$f^{\rm BUF}(\ell)$ and $\tilde{f}^{\rm BUF}(\ell)$ are
	given in terms of $V^{\rm BUF}_{\ell,\rho}(Du_0(\ell))$.
	The precise forms of $\tilde{f}^{\rm A}(\ell)$ and
	$\tilde{f}^{\rm BUF}(\ell)$ are not important, 
	we can obtain from Lemma \ref{lemma-Vl-MN-err} and the fact
	that they are given in terms of $\VlAr(Du_0(\ell))$ and
	$V^{\rm BUF}_{\ell,\rho}(Du_0(\ell))$ that
	\begin{eqnarray}\label{proof-A-2-2}
		\left| \tilde{f}^{\rm A}(\ell) - \tilde{f}^{\rm BUF}(\ell) \right|
		\leq C e^{-\eta\Rbuf} 
		\qquad{\rm for}~\RQM-\Rcore \leq |\ell|\leq 3\RQM.
	\end{eqnarray}
	
	When $|\ell|>3\RQM$, we have
	\begin{eqnarray}
		\label{proof-A-2-1}
		f^{\rm BUF}(\ell) &=& \sum_{\rho\in A\Z^d-0,~|\rho|\leq\Rbuf}
		\left( V_{\ell-\rho,\rho}^{\rm BUF}\big(Du_0(\ell-\rho)\big)
		- V_{\ell,\rho}^{\rm BUF}\big(Du_0(\ell)\big) \right) 
		\qquad {\rm and}
		\\[1ex]
		\nonumber
		f^{\rm A}(\ell)
		&=& \sum_{\rho\in A\Z^d-0,~|\rho|\leq\frac{|\ell|}{3}}
		\left( V_{\ell-\rho,\rho}^{\rm A}\big(Du_0(\ell-\rho)\big)
		- V_{\ell,\rho}^{\rm A}\big(Du_0(\ell)\big) \right)
		\\[1ex]
		\nonumber
		&=& \sum_{\rho\in A\Z^d-0,~|\rho|\leq\frac{|\ell|}{3}}
		\left( V_{\ell-\rho,\rho}^{\rm A}\big(Du_0(\ell-\rho)\big)
		- V_{\ell-\rho,\rho}^{\frac{|\ell|}{3}}\big(Du_0(\ell-\rho)\big) \right)
		\\[1ex]
		\nonumber
		&& + \sum_{\rho\in A\Z^d-0,~|\rho|\leq\frac{|\ell|}{3}}
		\left( V_{\ell-\rho,\rho}^{\frac{|\ell|}{3}}\big(Du_0(\ell-\rho)\big)
		- V_{\ell,\rho}^{\frac{|\ell|}{3}}\big(Du_0(\ell)\big) \right)
		% \\[1ex] \nonumber	&& \hc{ [[here is why we need to use |\ell|/3]]}
		\\[1ex]
		\label{proof-A-2-8}
		&:=& f_a(\ell) + f_b(\ell).
	\end{eqnarray}
	Lemma \ref{lemma-Vl-MN-err} and the definition of $\VlA$ 
	imply that
	\begin{eqnarray}\label{proof-A-2-6}
		f_a \leq C e^{-\frac{2}{9}\eta|\ell|}.
	\end{eqnarray}
	
	If {\bf P} is satisfied, then we have from $u_0(\ell-\rho)
	=u_0(\ell)=0$ and Lemma \ref{lemma-hom} that
	\begin{eqnarray}\label{proof-A-2-7}
		f_b(\ell)=0	\qquad{\rm and}\qquad f^{\rm BUF}(\ell)=0.
	\end{eqnarray}
	
	If {\bf D} is satisfied, we first consider the left halfspace
	$\ell_1<\hat{x}_1$, in which case we can replace $\DD$ by $D$.
	Let $e=Du_0(\ell)$ and suppressing the argument $(\ell)$,
	we have from Lemma \ref{lemma-hom} and the expansion
	\begin{eqnarray}\label{proof-A-2-4}
		V_{,\rho}(e) = V_{,\rho}({\bf 0}) + 
		\big\< \delta V_{,\rho}({\bf 0}) , e \big\> +
		\frac{1}{2}\int_0^1(1-t)\big\< \delta^2 V_{,\rho}(te)e , e \big\>\dd t 
		\quad{\rm with}\quad V=\Vlh~{\rm or}~V_{\#}^{\rm BUF} ~~
	\end{eqnarray}
	that
	\begin{multline}\label{proof-A-2-11}
		f_b(\ell) - f^{\rm BUF}(\ell)
		= \sum_{\rho,\xi} \left( V^{\frac{|\ell|}{3}}_{\ell,\rho\xi}({\bf 0})
		- V^{\rm BUF}_{\#,\rho\xi}({\bf 0}) \right) D_{-\rho}e_{\xi}
		\\
		+ \frac{1}{2}\int_0^1(1-t) D_{-\rho} \left\< 
		\Big( \delta^2 V^{\frac{|\ell|}{3}}_{\ell,\rho}(te)
		- \big(\delta^2 V^{\rm BUF}_{\#,\rho}(te) \Big) e , e \right\>\dd t 
		:= F_1+F_2 .   \quad
	\end{multline}
	Using Lemma \ref{lemma-decay-el} and Lemma \ref{lemma-Vl-MN-err},
	we have
	\begin{eqnarray*}
		F_1 \leq Ce^{-\eta\Rbuf} |\ell|^{-2} 
		\qquad{\rm and}\qquad
		F_2 \leq Ce^{-\eta\Rbuf} |\ell|^{-2} ,
	\end{eqnarray*}
	which implies $|f_b(\ell) 
	- f^{\rm BUF}(\ell)|\leq Ce^{-\eta\Rbuf} |\ell|^{-2}$
	for $\ell_1<\hat{x}_1$.
	For the right halfspace $\ell_1>\hat{x}_1$, we can repeat the foregoing
	argument to deduce
	\begin{eqnarray*}
		\left|S\left( f_b(\ell) - f^{\rm BUF}(\ell) \right)\right| 
		\leq Ce^{-\eta\Rbuf} |\ell|^{-2} .
	\end{eqnarray*}
	Note that $S$ is an $O(1)$ shift, which implies
	$|f_b(\ell) - f^{\rm BUF}(\ell)|
	\leq Ce^{-\eta\Rbuf} |\ell|^{-2}$ for $\ell_1>\hat{x}_1$.
	
	Taking \eqref{proof-A-2-1}, \eqref{proof-A-2-8}, \eqref{proof-A-2-6},
	\eqref{proof-A-2-7} and the above estimates for
	$|f_b(\ell) - f^{\rm BUF}(\ell)|$
	into accounts, we have
	\begin{eqnarray}\label{proof-A-2-3}
		\left| f^{\rm A}(\ell) - f^{\rm BUF}(\ell) \right|
		\leq C \left( e^{-\frac{2}{9}\eta|\ell|} + e^{-\eta\Rbuf} |\ell|^{-2}  \right) 
		\qquad{\rm for}~~|\ell|>3\RQM.
	\end{eqnarray}

	Combining \eqref{proof-A-2-0}, \eqref{proof-A-2-2},
	\eqref{proof-A-2-3} and $v\in\Usx$ yields
	\begin{eqnarray}\label{proof-A-2-12}
		&&\sum_{\ell\in\LMM\cup\LFF} 
		\big\< \delta V_{\#}^{\rm BUF}\big(\ee(\ell)\big) 
		- \delta \VlA\big(\ee(\ell)\big) , \DD v(\ell) \big\> 
		= \sum_{\RQM-\Rcore\leq |\ell|\leq\RMM} \mathfrak{f}(\ell) v(\ell)
		\\
		% \end{eqnarray}	
		% \begin{eqnarray}
		\label{esitmate-fl}
		\text{with} &&	|\mathfrak{f}(\ell)| \leq \left\{ \begin{array}{ll}
			Ce^{-\eta\Rbuf} & {\rm if} ~ \RQM-\Rcore \leq |\ell| \leq 3\RQM \\[1ex]
			C \left( e^{-\frac{2}{9}\eta|\ell|} + e^{-\eta\Rbuf} |\ell|^{-2}  \right) 
			& {\rm if} ~ |\ell|>3\RQM
		\end{array} \right. .
	\end{eqnarray}
	Since $\Rbuf>\frac{4}{\eta}\log\RQM$, we can write 
	$|\mathfrak{f}(\ell)| \leq Ce^{-\frac{\eta}{2}\Rbuf} |\ell|^{-2}$,
	which together with Lemma \ref{lemma-f-Dv} completes the
	proof of \eqref{vlhr-vbuf} as $\Rbuf>\frac{6}{\eta}\log\log\RMM$.
\end{proof}

\begin{proof}
	[Proof of \eqref{proof-c-T1}]
	Denote $g(\ell)=\DD T^{\rm H}\bar{u}(\ell)$ and suppressing the argument
	$(\ell)$ in $g(\ell)$ and $\ee(\ell)$, we have from Lemma
	\ref{lemma-Vl-MN-err} and Lemma \ref{lemma-vlhr-vbuf} that
	\begin{align}\label{proof-A-1-9}
		\nonumber
		\sum_{\ell\in\LMM\cup\LFF} 
		\big\< \delta V_{\#}^{\rm BUF}\big(\ee+g\big) 
		- \delta \VlA\big(\ee+g\big) , \DD v(\ell) \big\>  
		% \\[1ex]
		% \nonumber
		&= \sum_{\ell\in\LMM\cup\LFF} 
		\big\< \delta V_{\#}^{\rm BUF}\big(\ee\big) 
		- \delta \VlA\big(\ee\big) , \DD v(\ell) \big\>
		\\[1ex]
		\nonumber
		& \hspace{-6cm} + \sum_{\ell\in\LMM\cup\LFF} \int_0^1(1-t)
		\left\< \Big( \delta^2 V_{\#}^{\rm BUF}\big(\ee+tg\big) 
		- \delta^2 \VlA\big(\ee+tg\big) \Big) g, \DD v(\ell) \right\>   \dd t 
		\\[1ex]
		&\hspace{-1cm} \leq C \left( 
		e^{-\frac{\eta}{4}\Rbuf} + e^{-\eta\Rbuf} \|g\|_{\ell^2_{\gamma}}   \right)
		%+ e^{-\eta\Rbuf}  \|g\|^2_{\ell^4_{\gamma}}
		\|Dv\|_{\ell^2_{\gamma}} .
	\end{align}
	
	Using \eqref{Vl-buf-err}, \eqref{VlR-err}, \eqref{proof-A-1-9}
	and Theorem \ref{th:appl:regularity}, we have	
	\begin{eqnarray}\label{proof-A-2-20}
		\nonumber
		T_1 & = & \sum_{\ell\in\LQM} 
		\big\< \delta V_{\ell}^{\rm BUF}\big(\ee+g\big) 
		- \delta V_{\ell}\big(\ee+g\big) , \DD v(\ell) \big\> 
		\\[1ex]
		\nonumber
		&& 
		+ \sum_{\ell\in\LMM\cup\LFF} 
		\big\< \delta V_{\#}^{\rm BUF}\big(\ee+g\big) 
		- \delta \VlA\big(\ee+g\big) , \DD v(\ell) \big\> 
		\\[1ex]
		\nonumber
		&& + \sum_{\ell\in\LMM\cup\LFF} 
		\big\< \delta \VlA\big(\ee+g\big) 
		- \delta V_{\ell}\big(\ee+g\big) , \DD v(\ell) \big\> 
		%\\[1ex] \nonumber
		%&\leq & C \sum_{\ell\in\LQM} e^{-\eta\Rbuf} |\DD v(\ell)|_{\gamma}
		%+ + C \sum_{\ell\in\LMM\cup\LFF} e^{-\frac{1}{3}\eta|\ell|} |\DD v(\ell)|_{\gamma}
		\\[1ex]
		&\leq & C\big( \RQM^{m/2}e^{-\eta\Rbuf}  + e^{-\frac{\eta}{4}\Rbuf}
		+ e^{-\frac{\eta}{4}\RQM} \big) \|Dv\|_{\ell^2_{\gamma}} ,
	\end{eqnarray}	
	which completes the proof since $\RQM^{m/2}e^{-\eta\Rbuf}$
	and $e^{-\frac{\eta}{4}\RQM}$ can be omitted compared with
	the middle term when $\Rbuf>\frac{m}{2\eta}\log\RQM$.
\end{proof}

\begin{proof}
	[Proof of \eqref{proof-e-3}.]
	Using $T^{\rm H}\bar{u}\in\AH$ and the expansion
	\begin{eqnarray}\label{proof-A-3-1}
		V(\ee+g) = V(\ee) + \big\< \delta V(\ee) , g \big\> +
		\frac{1}{2}\int_0^1(1-t)\big\< \delta^2 V(\ee+tg)g , g \big\>\dd t 
		~{\rm with}~ V=V_{\ell}~{\rm or}~V_{\ell}^{\rm BUF} ~~~
	\end{eqnarray}
	for $\RQM\leq|\ell|\leq\RMM+\Rbuf$, we have
	\begin{eqnarray}\label{proof-A-3-2}
		\nonumber
		|S_1| &\leq &
		\sum_{\ell\in \LQM} 
		\left( V_{\ell}(g+\ee) - V_{\ell}(\ee)
		- V^{\rm BUF}_{\ell}(g+\ee) + V^{\rm BUF}_{\ell}(\ee) \right) 
		\\ \nonumber
		&& + \sum_{\RQM\leq|\ell|\leq\RMM+\Rbuf} 
		\big\< \delta V_{\ell}(\ee) - \delta V_{\ell}^{\rm BUF}(\ee) , g \big\> 
		\\ \nonumber
		&& + \frac{1}{2}\sum_{\RQM\leq|\ell|\leq\RMM+\Rbuf} 
		\int_0^1(1-t)\left\< \big(\delta^2 V_{\ell}(\ee+tg) 
		- \delta^2 V_{\ell}^{\rm BUF}(\ee+tg)\big) g , g \right\>\dd t 
		\\[1ex]
		&:=& S_1^a + S_1^b + S_1^c .
	\end{eqnarray}
	The first and third group of \eqref{proof-A-3-2} can be
	estimated by Lemma \ref{lemma-Vl-MN-err}:
	\begin{eqnarray}\label{proof-A-3-3}
		|S_1^a|\leq C\RQM^{m/2}e^{-\eta\Rbuf}
		\qquad{\rm and}\qquad
		|S_1^c|\leq Ce^{-\eta\Rbuf}\|g\|^2_{\ell^2_{\gamma}}.
	\end{eqnarray}
	Using Lemma \ref{lemma-vlhr-vbuf}, 
	%    \co{[to me this seems essentially an application of that lemma?]}, 
	we can obtain the estimate for the second group:
	\begin{eqnarray*}
		|S_1^b|\leq Ce^{-\frac{\eta}{4}\Rbuf}
		\qquad{\rm when}~\Rbuf>\frac{6}{\eta}\log\log\RMM,
	\end{eqnarray*}
	which together with \eqref{proof-A-3-2} and \eqref{proof-A-3-3}
	completes the proof of \eqref{proof-e-3}.
\end{proof}

\section{Stability of force-mixing methods}
\label{sec-proof-s-Q1}
\renewcommand{\theequation}{C.\arabic{equation}}
\renewcommand{\thetheorem}{C.\arabic{theorem}}
\renewcommand{\thelemma}{C.\arabic{lemma}}
\renewcommand{\theproposition}{C.\arabic{proposition}}
\renewcommand{\thealgorithm}{C.\arabic{algorithm}}
\setcounter{equation}{0}

\def\Tu{T^{\rm H}\bar{u}}
\def\tF{\widetilde{\F}}
\def\tFb{\widetilde{\F}^{\rm b}}
\def\tEb{\widetilde{\E}^{\rm b}}

Here, we establish the result that the energy-mixing Hessian and force-mixing
Jacobian are ``close''. This result is reminiscent of similar results in the
context of atomistic/continuum blending \cite{luskin13}, but the proofs are not
closely related.

\begin{proof}[Proof of \eqref{proof-f-s-Q1}]
	Let $\LI:=\{\ell\in\L,~\RQM-\Rbuf\leq|\ell|\leq\RQM+\Rbuf\}$
	be the interface region.
	Denoting
	% $w:=T^{\rm H}\bar{u}$,
	$\tFb_{\ell}(v):=\widetilde{\F}^{B_{\Rbuf}(\ell)}_{\ell}(v)$ and 
	$\tEb(v):= \sum_{\ell\in\L}\big(E^{\rm  buf}_{\ell}(Px_0+v)-E^{\rm BUF}_{\ell}(Px_0)\big)$
	with $E_{\ell}^{\rm BUF}=E_{\ell}^{B_{\Rbuf}(\ell)}$, 
	we can split 
	\begin{eqnarray}\label{proof-C-1}
		\nonumber
		&& \hspace{-1.5cm} \big\< \big(\delta\FH(T^{\rm H}\bar{u}) 
		- \delta^2\EH(T^{\rm H}\bar{u})\big) v, v \big\>
		% \\[1ex] \nonumber
		=
		\sum_{\ell\in \L}\big\<\delta\FH_{\ell}(\Tu)-\delta\nabla_{\ell}\EH(\Tu), v\big\> v(\ell) 
		\\[1ex] \nonumber
		&=& 
		\sum_{\ell\in A\Z^m}\big\<\delta\tFb_{\ell}(0)-\delta\nabla_{\ell}\tEb(0), v\big\> v(\ell) 
		\\[1ex] \nonumber
		&& + \sum_{\ell\in \LQM\backslash\LI}\big\<\delta\FH_{\ell}(\Tu)
		-\delta\nabla_{\ell}\EH(\Tu), v\big\> v(\ell) 
		\\[1ex] \nonumber
		&& - \sum_{\ell\in A\Z^d\cap B_{\RQM-\Rbuf}}\big\<\delta\tFb_{\ell}(0)
		-\delta\nabla_{\ell}\tEb(0), v\big\> v(\ell) 
		\\[1ex] \nonumber
		&& + \sum_{\ell\in \LI}\left( \big\<\delta\FH_{\ell}(\Tu)
		-\delta\tFb_{\ell}(0), v\big\> v(\ell) 
		- \big\<\delta\nabla_{\ell}\EH(\Tu)-\delta\nabla_{\ell}\tEb(0), v\big\> v(\ell)  \right)
		\\[1ex] \nonumber
		&& + \sum_{\ell\in\LMM\backslash\LI}\left( \big\<\delta\FH_{\ell}(\Tu)
		-\delta\tFb_{\ell}(0), v\big\> v(\ell) 
		- \big\<\delta\nabla_{\ell}\EH(\Tu)-\delta\nabla_{\ell}\tEb(0), v\big\> v(\ell)  \right)
		\\[1ex]
		&:=& Q_{1} + Q_{2} + Q_{3} + Q_{4} + Q_{5} .
	\end{eqnarray}

	{\it Estimate for $Q_1$: } Using Theorem \ref{theorem-thermodynamic-limit-force}
	(ii), (iii) and \cite[Lemma 3.4]{hudson12}, we can rewrite $Q_{1}$ as
	\begin{eqnarray}\label{proof-Q1-1}
		Q_{1} = \sum_{\ell\in A\Z^d}\sum_{\rho\in A\Z^d-0} 
		D_{\rho}v(\ell)^{\rm T} \big(A^{F}_{\rho}-A^{E}_{\rho}\big) D_{\rho}v(\ell)
	\end{eqnarray}
	with $A^{F}_{\rho},A^{E}_{\rho}\in\R^{d\times d}$,
	$A^{F}_{\rho}=-\frac{1}{2}\tFb_{0,\rho}(0)$ and
	$A^{E}_{\rho}=-\frac{1}{2}\tEb_{,0\rho}(0)$. This crucially uses that fact
	that we apply the force approximation $\widetilde{\mathcal{F}}_\ell^{\rm b}$
	at {\em every lattice site}, which makes it conservative.  Note that Lemma
	\ref{lemma-Vl-MN-err} and Theorem \ref{theorem-thermodynamic-limit} (i) imply
	% and the expressions \eqref{eq:force-Du}, \eqref{eq-force-Du-Omega}
	\begin{eqnarray*}
		|A^{F}_{\rho}-A^{E}_{\rho}| \leq Ce^{-\eta(\Rbuf+|\rho|)},
	\end{eqnarray*}
	which together with \eqref{proof-Q1-1} yields
	\begin{eqnarray}\label{proof-C-Q1}
		|Q_{1}| \leq Ce^{-\eta\Rbuf} \|Dv\|^2_{\ell^2_{\gamma}}.
	\end{eqnarray}
	
	{\it Estimates for $Q_2, Q_3$: } Lemma \ref{lemma-emb} and $v\in\Usx$ imply that
	\begin{equation}\label{proof-C-2-2}
		\begin{split}
			\|v\|_{\ell^{\infty}} &\leq C\|Dv\|_{\ell^2_{\gamma}} 
			\big( 1+\log\RMM \big), \qquad \text{if } m = 2, \text{ and}  \\
			\|v\|_{\ell^6} &\leq C \|Dv\|_{\ell^2_{\gamma}}, \qquad \text{if } m = 3.
		\end{split}
	\end{equation}
	Using \eqref{proof-C-2-2}, Lemma \ref{lemma-Vl-MN-err} and 
	% \eqref{eq:force}, \eqref{eq-force-Du-Omega}, 
	Theorem \ref{theorem-thermodynamic-limit} (i), we have
	\begin{eqnarray}\label{proof-C-Q2}
		\nonumber
		|Q_2| &\leq& C\sum_{\ell\in \LQM\backslash\LI} 
		e^{-\eta\Rbuf} |Dv(\ell)|_{\gamma} |v(\ell)|
		\\[1ex] \nonumber
		&\leq& C \left\{\begin{array}{ll}
			\RQM\cdot \log\RMM \cdot e^{-\eta\Rbuf} \|Dv\|^2_{\ell^2_{\gamma}} 
			& {\rm if}~m=2
			\\[1ex]
			\RQM \cdot e^{-\eta\Rbuf} \|Dv\|^2_{\ell^2_{\gamma}} 
			& {\rm if}~m=3 
		\end{array} \right.
		\\[1ex]
		&\leq&  C\RQM \cdot \log\RMM \cdot e^{-\eta\Rbuf} \|Dv\|^2_{\ell^2_{\gamma}}.
	\end{eqnarray}
	Analogously, we have
	\begin{eqnarray}\label{proof-C-Q3}
		|Q_3| \leq C\RQM \cdot \log\RMM \cdot e^{-\eta\Rbuf} \|Dv\|^2_{\ell^2_{\gamma}} .
	\end{eqnarray}

	{\it Estimate for $Q_4$: } 
	We can rewrite $Q_{4}$ as
	\begin{align*} 
		& Q_{4} = \sum_{\ell\in \LI}\big\<\delta\FH_{\ell}(\Tu) 
		- \delta\F^{\rm BUF}_{\ell}(\Tu), v\big\> v(\ell) 
		+ \sum_{\ell\in \LI}\big\<\delta\F^{\rm BUF}_{\ell}(\Tu) 
		- \delta\tFb_{\ell}(u_0+\Tu), v\big\> v(\ell) 
		\\[1ex]
		& - \sum_{\ell\in \LI} \big\<\delta\nabla_{\ell}\EH(\Tu) 
		- \delta\nabla_{\ell}\E^{\rm BUF}(\Tu), v\big\> v(\ell) 
		- \sum_{\ell\in \LI}\big\<\delta\nabla_{\ell}\E^{\rm BUF}(\Tu) 
		- \delta\nabla_{\ell}\tEb(u_0+\Tu), v\big\> v(\ell) 
		\\[1ex] 
		& + \sum_{\ell\in \LI}\big\<\delta\tFb_{\ell}(u_0+\Tu)
		-\delta\tFb_{\ell}(0), v\big\> v(\ell) 
		- \sum_{\ell\in \LI}\big\<\delta\nabla_{\ell}\tEb(u_0+\Tu)
		-\delta\nabla_{\ell}\tEb(0), v\big\> v(\ell).
	\end{align*}
	Using \eqref{eq-force-Du-Omega} and
	$\big|V_{\ell-\rho,\rho\xi\zeta}^{B_{\Rbuf}(\ell)} -
	V_{\ell-\rho,\rho\xi\zeta}^{B_{\Rbuf}(\ell-\rho)}\big| \leq Ce^{-\eta
		\big(\big|\Rbuf-|\rho|\big|+|\rho|+|\xi|+|\zeta|\big)}$
	(this is proved analogously to Lemma \ref{lemma-Vl-MN-err}), the last line an be
	bounded by
	\begin{align*}
		& \Bigg| \sum_{\ell\in \LI} \sum_{\rho\in\ell-\LI,~|\rho|\leq\Rbuf} \Big\< 
		\left( \delta V^{B_{\Rbuf}(\ell)}_{\ell-\rho,\rho}\big(\ee(\ell-\rho)+\DD\Tu(\ell-\rho)\big) 
		- \delta V^{B_{\Rbuf}(\ell)}_{\ell-\rho,\rho}({\bf 0}) \right) 
		\\[1ex]
		& - \left( \delta V^{B_{\Rbuf}(\ell-\rho)}_{\ell-\rho,\rho}\big(\ee(\ell-\rho)+\DD\Tu(\ell-\rho)\big) 
		- \delta V^{B_{\Rbuf}(\ell-\rho)}_{\ell-\rho,\rho}({\bf 0}) \right) , 
		\DD v(\ell-\rho) \Big\> ~v(\ell) \Bigg|
		\\[1ex]
		&= \Bigg| \sum_{\ell\in\LI} \sum_{|\rho|\leq\Rbuf} \int_0^1 \Big\< \Big( \delta^2 V_{\ell-\rho,\rho}^{B_{\Rbuf}(\ell)}\big(t(\ee(\ell-\rho)+\DD\Tu(\ell-\rho))\big)
		\\[1ex] 
		& - \delta^2 V_{\ell-\rho,\rho}^{B_{\Rbuf}(\ell-\rho)}\big(t(\ee(\ell-\rho)+\DD\Tu(\ell-\rho))\big) \Big) 
		\big(\ee(\ell-\rho)+\DD\Tu(\ell-\rho)\big)  , \DD v(\ell-\rho) \Big\> \dd t ~ v(\ell) \Bigg|  
		\\[1ex]
		&\leq C \sum_{\ell\in\L^{\rm I'}} e^{-\eta \Rbuf} \cdot |\ee(\ell)+\DD\Tu(\ell)|_{\gamma} 
		\cdot |\DD v(\ell)|_\gamma \cdot \Big( \sum_{|\rho|\leq\Rbuf}e^{-\gamma|\rho|}\cdot|v(\ell+\rho)| \Big) .
	\end{align*}
	with $\L^{\rm I'}:=\{\ell\in\L, ~\max\{0,\RQM-2\Rbuf\} 
	\leq |\ell| \leq \RQM+2\Rbuf\}$.
	Using Theorem \ref{th:appl:regularity} and \eqref{proof-C-2-2}, we have 
	\begin{eqnarray}\label{proof-C-Q4}
		\nonumber
		|Q_{4}| &\leq& C \sum_{\ell\in\L^{\rm I'}} 
		\left( |\ee(\ell)+\DD\Tu(\ell)|_{\gamma}^k + e^{-\eta\Rbuf} \right)  
		\cdot |\DD v(\ell)|_{\gamma} \cdot \Big( \sum_{|\rho|\leq\Rbuf}e^{-\gamma|\rho|}\cdot|v(\ell+\rho)| \Big) 
		\\[1ex] \nonumber
		&\leq& C \Rbuf^{1/2} \cdot \|Dv\|^2_{\ell^2_{\gamma}} \left\{\begin{array}{ll}
			\log\RMM \big( \RQM^{-2k+1/2} + e^{-\eta\Rbuf} \big) & {\rm if ~ \bf P ~ \rm with~} m=2 \\[1ex]
			\RQM^{-3k+1/3} + e^{-\eta\Rbuf} & {\rm if ~ \bf P ~ \rm with~} m=3 \\[1ex]
			\log\RMM \big(\RQM^{-k+1/2} + e^{-\eta\Rbuf} \big) & {\rm if ~ \bf D} 
		\end{array} \right.
		\\[1ex] 
		&\leq& C\Rbuf^{1/2} \cdot \log\RMM \cdot \big(\RQM^{-k+1/2} + e^{-\eta\Rbuf} \big)  \|Dv\|^2_{\ell^2_{\gamma}}.
	\end{eqnarray}

	{\it Estimate for $Q_{5}$: } Let $\Fa_{\ell}(v):=F_{\ell}(Px+v)$ and
	$\Ea(v):=\sum_{\ell\in\L}\big(E_{\ell}(Px_0+v)-E_{\ell}(Px_0)\big)$, then
	$\Fa_\ell(v) = \nabla_\ell \Ea(v)$.  Further, we define
	\begin{align}\label{Tk_tildeF}
		\widehat{T}_k \Fa_\ell(w) &= \nabla_\ell \widehat{T}_{k+1} \Ea(w)
		:= \frac{\partial \widehat{T}_{k+1} \Ea(w)}{\partial w_\ell}
		\qquad \text{where} 
		\\[1ex] \nonumber
		\widehat{T}_{k+1}\Ea(w) &= T_{k+1}\Ea(w)- \Ea(0) - \big\< \delta\Ea(0) , w \big\> .
	\end{align}
	Then, for any $\ell\in\LMM\backslash\LI$, we have
	\begin{eqnarray*}
		&& \hspace{-2cm} \left|\Big\<\big(\delta\FH_{\ell}(\Tu) - \delta\tFb_{\ell}(0) \big)
		- \big(\delta\nabla_{\ell}\EH(\Tu) - \delta\nabla_{\ell}\tEb(0) \big) , v \Big\> \right|
		\\[1ex]
		&\leq & \left| \Big\< \delta\FH_{\ell}(\Tu) - \delta\tFb_{\ell}(0) 
		- \delta\widehat{T}_k \Fa_\ell(u_0+\Tu) , v \Big\> \right| \\
		&& \quad 
		+ \left| \Big\< \delta\nabla_{\ell}\EH(\Tu) - \delta\nabla_{\ell}\tEb(0) 
		- \delta\nabla_\ell \widehat{T}_{k+1} \Ea(u_0+\Tu) , v \Big\> \right|
		\\[1ex]
		&\leq& Ce^{-\eta\Rbuf} \sum_{|\rho|\leq\Rbuf}
		e^{-\gamma|\rho|}\cdot|\DD v(\ell-\rho)|_{\gamma} \cdot 
		\left\{ \begin{array}{ll}
			|\ell-\rho|^{-m}  & {\rm if~\bf P} 
			\\[1ex]
			|\ell-\rho|^{-2}\log|\ell-\rho| & {\rm if~\bf D} 
		\end{array} \right. .
	\end{eqnarray*}
	Summing over $\ell \in \LMM \setminus \LI$, and applying \eqref{proof-C-2-2} we
	obtain
	\begin{eqnarray}\label{proof-C-Q5}
		|Q_{5}| \leq C \,\log^2\frac{\RMM}{\RQM} \cdot e^{-\eta\Rbuf} \|Dv\|^2_{\ell^2_{\gamma}} .
	\end{eqnarray}
	(For case {\bf P} one obtains $\log \frac{\RMM}{\RQM}$ instead of
	$\log^2\frac{\RMM}{\RQM}$, but this is qualitatively the same as replacing the
	unknown exponent $\eta$ with $\eta/2$, hence we ignore this small
	improvement.)

	Combining \eqref{proof-C-1}, \eqref{proof-C-Q1}, \eqref{proof-C-Q2},
	\eqref{proof-C-Q3}, \eqref{proof-C-Q4} and \eqref{proof-C-Q5}, we finally deduce
	that
	\begin{eqnarray*}
		\left|\big\< \big(\delta\FH(T^{\rm H}\bar{u}) 
		- \delta^2\EH(T^{\rm H}\bar{u})\big) v, v \big\>\right| \leq C 
		\left( \RQM^{-k+3/4} + e^{-\frac{\eta}{4}\Rbuf} \right) \|D v\|^2_{\ell^2_{\gamma}}
	\end{eqnarray*}
	provided that $\RQM>\log^4\RMM$, $\Rbuf>\frac{3}{\eta}\log\RQM$,
	$\Rbuf>\frac{3}{\eta}\log\log\RMM$.  This completes the proof.
	% with $\kappa=\frac{\eta}{3}$.
\end{proof}

\bibliographystyle{siam}
\bibliography{qmmmtb2bib}

\begin{thebibliography}{10}

\bibitem{abraham00}
{\sc F.~Abraham, N.~Bernstein, J.~Broughton, and D.~Hess}, {\em Dynamic
  fracture of silicon: concurrent simulation of quantum electrons, classical
  atoms and the continuum solid}, MRS Bull, 25 (2000), pp.~27--32.

\bibitem{bernstein03}
{\sc N.~Bernstein and D.~Hess}, {\em Lattice trapping barriers to brittle
  fracture}, Phys. Rev. Lett., 91 (2003), pp.~025501 1--4.

\bibitem{bernstein09}
{\sc N.~Bernstein, J.~Kermode, and G.~Cs\'{a}nyi}, {\em Hybrid atomistic
  simulation methods for materials systems}, Rep. Prog. Phys., 72 (2009),
  pp.~26051 1--25.

\bibitem{broughton99}
{\sc J.~Broughton, F.~Abraham, N.~Bernstein, and E.~Kaxiras}, {\em Concurrent
  coupling of length scales: methodology and application}, Phys. Rev. B, 60
  (1999), pp.~2391--2403.

\bibitem{chenpre_vardef}
{\sc H.~Chen, Q.~Nazar, and C.~Ortner}, {\em Variational problems for
  crystalline defects}.
\newblock in preparation.

\bibitem{chen15a}
{\sc H.~Chen and C.~Ortner}, {\em {QM/MM} methods for crystalline defects. part
  1: Locality of the tight binding model}.
\newblock arXiv:1505.05541.

\bibitem{csanyi05}
{\sc G.~Cs\'{a}nyi, T.~Albaret, G.~Moras, M.~Payne, and A.~D. Vita}, {\em
  Multiscale hybrid simulation methods for material systems}, J. Phys,:
  Condens. Matter, 17 (2005), pp.~691--703.

\bibitem{csanyi04}
{\sc G.~Cs\'{a}nyi, T.~Albaret, M.~Payne, and A.~D. Vita}, {\em ``{L}earn on
  the fly": a hybrid classical and quantum-mechanical molecular dynamics
  simulation}, Phys. Rev. Lett., 93 (2004), pp.~175503 1--4.

\bibitem{ehrlacher13}
{\sc V.~Ehrlacher, C.~Ortner, and A.~Shapeev}, {\em Analysis of boundary
  conditions for crystal defect atomistic simulations}.
\newblock arXiv:1306.5334.

\bibitem{ercolessi05}
{\sc F.~Ercolessi}, {\em Lecture notes on tight-binding molecular dynamics and
  tight-binding justification of classical potentials}.
\newblock Lecture notes 2005.

\bibitem{ercolessi94}
{\sc F.~Ercolessi and J.~Adams}, {\em Interatomic potentials from
  first-principles calculations: the force-matching method}, Europhys. Lett.,
  26 (1994), pp.~583--588.

\bibitem{finnis03}
{\sc M.~Finnis}, {\em Interatomic Forces in Condensed Matter}, Oxford
  University Press, Oxford, 2003.

\bibitem{gao02}
{\sc J.~Gao and D.~Truhlar}, {\em Quantum mechanical methods for enzyme
  kinetics}, Annu. Rev. Phys. Chem., 53 (2002), pp.~467--505.

\bibitem{goringe97}
{\sc C.~Goringe, D.~Bowler, and E.~Hern\'{a}ndez}, {\em Tight-binding modelling
  of materials}, Rep. Prog. Phys., 60 (1997), pp.~1447--1512.

\bibitem{hudson12}
{\sc T.~Hudson and C.~Ortner}, {\em On the stability of bravais lattices and
  their cauchy-born approximations}, ESAIM:M2AN, 46 (2012), pp.~81--110.

\bibitem{kastner09}
{\sc J.~K\"{a}stner, J.~Carr, T.~Keal, W.~Thiel, A.~Wander, and P.~Sherwood},
  {\em {DL-FIND: A}n open-source geometry optimizer for atomistic simulations},
  J. Phys. Chem. A, 113 (2009), pp.~11856--11865.

\bibitem{kermode08}
{\sc J.~Kermode, T.~Albaret, D.~Sherman, N.~Bernstein, P.~Gumbsch, M.~Payne,
  G.~Cs\'{a}nyi, and A.~D. Vita}, {\em Low-speed fracture instabilities in a
  brittle crystal}, Nature, 455 (2008), pp.~1224--1227.

\bibitem{khare07}
{\sc R.~Khare, S.~Mielke, J.~Paci, S.~Zhang, R.~Ballarini, G.~Schatz, and
  T.~Belytschko}, {\em Coupled quantum mechanical/molecular mechanical modeling
  of the fracture of defective carbon nanotubes and graphene sheets}, Phys.
  Rev. B, 75 (2007), pp.~075412 1--12.

\bibitem{luskin13}
{\sc M.~Luskin and C.~Ortner}, {\em Atomistic-to-continuum-coupling}, Acta
  Numerica, 2013.

\bibitem{metz14}
{\sc S.~Metz, J.~K\"{a}stner, A.~Sokol, T.~Keal, and P.~Sherwood}, {\em
  {ChemShell} - a modular software package for {QM/MM} simulations}, WIREs
  Comput. Mol. Sci., 4 (2014), pp.~101--110.

\bibitem{ogata05}
{\sc S.~Ogata}, {\em Buffered-cluster method for hybridization of
  density-functional theory and classical molecular dynamics: application to
  stress-dependent reaction of {$H_2O$} on nanostructured {$Si$}}, Phys. Rev.
  B, 72 (2005), pp.~045348--045364.

\bibitem{ogata01}
{\sc S.~Ogata, E.~Lidorikis, F.~Shimojo, A.~Nakano, P.~Vashishta, and
  R.~Kalia}, {\em Hybrid
  finite-element/molecular-dynamic/electronic-density-functional approach to
  materials simulations on parallel computers}, Comput. Phys. Commun., 138
  (2001), pp.~143--154.

\bibitem{ortner11}
{\sc C.~Ortner}, {\em A priori and a posteriori analysis of the quasinonlocal
  quasicontinuum method in 1d}, Math. Comp., 80 (2011), pp.~1265--1285.

\bibitem{ortner12}
{\sc C.~Ortner and A.~Shapeev}, {\em Interpolants of lattice functions for the
  analysis of atomistic/continuum multiscale methods}.
\newblock arXiv:1204.3705.

\bibitem{ortner13}
{\sc C.~Ortner and F.~Theil}, {\em Justification of the {Cauchy-Born}
  approximation of elastodynamics}, Arch. Ration. Mech. Anal., 207 (2013),
  pp.~1025--1073.

\bibitem{Papaconstantopoulos15}
{\sc D.~Papaconstantopoulos}, {\em Handbook of the Band Structure of Elemental
  Solids, From Z = 1 To Z = 112}, Springer New York, 2015.

\bibitem{sherwood03}
{\sc P.~Sherwood, A.~de~Vries, M.~Guest, G.~Schreckenbach, C.~Catlow,
  S.~French, A.~Sokol, S.~Bromley, W.~Thiel, A.~Turner, S.~Billeter,
  F.~Terstegen, S.~Thiel, J.~Kendrick, S.~Rogers, J.~Casci, M.~Watson, F.~King,
  E.~Karlsen, M.~Sj\O{}voll, A.~Fahmi, A.~Sch\"{a}fer, and C.~Lennartz}, {\em
  {QUASI}: a general purpose implementation of the {QM/MM} approach and its
  application to problems in catalysis}, J. Mol. Struct.: THEOCHEM, 632 (2003),
  pp.~1--28.

\bibitem{sokol04}
{\sc A.~Sokol, S.~Bromley, S.~French, C.~Catlow, and P.~Sherwood}, {\em Hybrid
  {QM/MM} embedding approach for the treatment of localized surface states in
  ionic materials}, Int. J. Quantum Chem., 99 (2004), pp.~695--712.

\bibitem{svensson96}
{\sc M.~Svensson, S.~Humbel, R.~Froese, T.~Matsubara, S.~Sieber, and
  K.~Morokuma}, {\em {ONIOM}: a multilayered inegrated {MO+MM} method for
  geometry optimizations and single point energy predictions, a test for
  {Diels-Alder} reactions and {$Pt(P(t-Bu)_3)_2+H_2$} oxidative addition}, J.
  Phys. Chem., 100 (1996), pp.~19357--19363.

\bibitem{tejada15}
{\sc I.~Tejada, L.~Brochard, G.~Stoltz, F.~Legoll, T.~Leli\`{e}vre, and
  E.~Canc\`{e}s}, {\em Combining a reactive potential with a harmonic
  approximation for molecular dynamics simulation of failure: construction of a
  reduced potential}, J. Phys.: Conf. Ser., 574 (2015), pp.~012041 1--5.

\bibitem{vita98}
{\sc A.~D. Vita and R.~Car}, {\em A novel scheme for accurate {MD} simulations
  of large systems}, Mater. Res. Soc. Symp. Proc., 491 (1998), pp.~473--480.

\bibitem{warshel76}
{\sc A.~Warshela and M.~Levitta}, {\em Theoretical studies of enzymic
  reactions: Dielectric, electrostatic and steric stabilization of the
  carbonium ion in the reaction of lysozyme}, Journal of Molecular Biology, 103
  (1976), pp.~227--249.

\bibitem{zhang12}
{\sc X.~Zhang, Y.~Zhao, and G.~Lu}, {\em Recent development in quantum
  mechanics/molecular mechanics modelling for materials}, Int. J. Multiscale
  Comput. Eng., 10 (2012), pp.~65--82.

\end{thebibliography}

\end{document}